\documentclass[11pt,a4paper,leqno,twoside]{amsart}
\usepackage{amsmath,amssymb,amsthm,enumerate}
\usepackage[colorlinks=true,urlcolor=blue, citecolor=red,linkcolor=blue,linktocpage,pdfpagelabels,bookmarksnumbered,bookmarksopen]{hyperref}
\numberwithin{equation}{section}
\setlength{\textwidth}{14cm}

\newif\ifcomment \commentfalse
\def\commentON{\commenttrue}

\typeout{Commenti On/Off by Fabio Massacci, Nov. 1995}
\long\outer\def\BC#1\EC{\ifcomment \sloppy \par \# \ldots\dotfill
{\em #1} \dotfill \# \par \fi } \commentON

\newcommand{\remove}[1]{}
\newtheorem{theorem}{Theorem}[section]

\newtheorem{proposition}[theorem]{Proposition}
\newtheorem{lemma}[theorem]{Lemma}

\newtheorem{remark}[theorem]{Remark}

\newcommand{\nor}{\Arrowvert}
\newcommand{\utp}{\tilde{u}_p}
\def\tv{\tilde v}
\def\up{u_p}

\def\tu{\widetilde{u}}
\def\e{{\varepsilon}}

\def\na{\nabla}
\def\d{\delta}
\def\g{{\gamma}}
\def\G{{\Gamma}}
\def\D{\Delta}
\def\L{{\Lambda}}
\def\l{{\lambda}}
\def\a{{\alpha}}
\def\b{{\beta}}

\def\de{\partial}

\def\cS{\mathcal{S}}
\def\cC{\mathcal{C}}
\newcommand{\R}{\mathbb{R}}
\newcommand{\N}{\mathbb{N}}
\newcommand{\pa}{\partial}
\newcommand{\B}{B}
\begin{document}
\title{Bifurcation and symmetry breaking for the Henon equation}
\author[A.~L.~Amadori, F.~Gladiali]{Anna Lisa Amadori$^\dag$, Francesca Gladiali$^\ddag$}
\date{\today}
\address{$\dag$ Dipartimento di Scienze Applicate, Universit\`a di Napoli ``Parthenope", Centro Direzionale di Napoli, Isola C4, 80143 Napoli}
\address{$\ddag$ Matematica e Fisica, Universit\`a di Sassari,via Piandanna 4, 07100 Sassari}
\email{annalisa.amadori@uniparthenope.it  fgladiali@uniss.it}
\begin{abstract}
\noindent
In this paper we consider the problem
$$
\left\{
\begin{array}{ll}
-\Delta u=|x|^{\a}u^p &\text{ in }\B ,\\
u>0& \hbox{ in }\B ,\\
u=0 & \hbox{ on }\pa \B ,
\end{array}\right.
$$
where $\B $ is the unit ball of $\R^N$, $N\ge 3$, $p>1$ and $0<\a\leq 1$. We prove the existence
of (at least) one branch of nonradial solutions that bifurcate from the radial ones and that this branch is  unbounded.

{\bf Keywords}: semilinear elliptic equations, symmetry breaking, bifurcation

\end{abstract}

\maketitle

\section{Introduction}\label{s1}
In this paper we consider the problem
\begin{equation}\label{1}
\left\{
\begin{array}{ll}
-\Delta u=|x|^{\a}u^p & \hbox{ in }\B ,\\
u>0& \hbox{ in }\B ,\\
u=0 & \hbox{ on }\pa \B ,
\end{array}\right.
\end{equation}
 introduced by Henon in 1973 in the study of stellar cluster in spherically symmetric setting \cite{H}.
Here $\B $ is the unit ball in $\R^N$ with $N\geq 3$, $\a>0$, and $1<p<p_{\a}$, with $p_{\a}=\frac{N+2+2\a}{N-2}$.

In the subcritical case, i.e.~for $1<p<\frac{N+2}{N-2}=2^*-1$, where $2^*=\frac{2N}{N-2}$ is the usual critical Sobolev exponent, standard embedding arguments yield that problem \eqref{1} has at least one solution. The critical and supercritical case, i.e.~$p\geq \frac{N+2}{N-2}$, are not so simple instead, because the presence of the term $|x|^{\a}$ brings to a new critical exponent $p_{\a}$: an {\textit{ad-hoc}} Pohozaev identity implies nonexistence of solutions for $p\geq p_{\a}$. 
Actually $p_{\a}$ is the analogous of the critical Sobolev exponent for the Henon problem, and separates the threshold between existence and nonexistence of positive solutions.
The following fundamental existence result is due to Ni.
\begin{theorem}[Ni, \cite{Ni}]\label{tNi}
The elliptic boundary value problem \eqref{1} possesses a positive radial solution provided $p\in\left(1, p_{\a}\right)$.
\end{theorem}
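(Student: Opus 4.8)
The natural approach is to produce the radial solution by a variational argument carried out in the subspace of radially symmetric functions, exploiting the crucial fact that the weight $|x|^\a$ restores compactness of the relevant embedding all the way up to the exponent $p_\a$. Work in $H:=H^1_{0,\mathrm{rad}}(\B)$ with $\|u\|^2:=\int_\B|\na u|^2\,dx$, and consider the energy functional
\[
J(u):=\tfrac12\|u\|^2-\tfrac{1}{p+1}\int_\B|x|^\a|u|^{p+1}\,dx .
\]
By Palais' principle of symmetric criticality, any nonzero critical point of $J$ on $H$ is a (radial) weak solution of $-\D u=|x|^\a|u|^{p-1}u$ in $\B$, $u=0$ on $\pa\B$; so it suffices to produce such a critical point and then establish positivity and regularity.

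The heart of the matter is the \emph{weighted radial embedding}: for $1<p<p_\a$ the inclusion $H\hookrightarrow L^{p+1}\big(\B,|x|^\a dx\big)$ is continuous and compact. For continuity, use that a radial $u\in H$ satisfies the pointwise bound $|u(r)|\le c_N\,r^{-(N-2)/2}\|u\|$, obtained by writing $u(r)=-\int_r^1u'(s)\,ds$ (since $u(1)=0$) and applying Cauchy--Schwarz against the weight $s^{N-1}$. Then $r^{N-1+\a}|u(r)|^{p+1}\le c\,r^{\,N-1+\a-\frac{(N-2)(p+1)}{2}}\|u\|^{p+1}$, and the exponent of $r$ exceeds $-1$ precisely when $p<p_\a$, so $\int_0^1 r^{N-1+\a}|u|^{p+1}\,dr$ is finite; equivalently, the sharp weighted one-dimensional Sobolev inequality
\[
\Big(\int_0^1 r^{N-1+\a}|u|^{p+1}\,dr\Big)^{\frac{1}{p+1}}\le C\Big(\int_0^1 r^{N-1}|u'|^2\,dr\Big)^{\frac12}
\]
holds up to $p+1=\frac{2(N+\a)}{N-2}$, i.e.\ up to $p=p_\a$. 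For compactness one combines Rellich's theorem on the annuli $\{\d\le|x|\le1-\d\}$ with the uniform tightness near the origin furnished by the same pointwise bound: if $\|u_n\|\le M$ then $\int_{|x|<\d}|x|^\a|u_n|^{p+1}\,dx\to0$ as $\d\to0$, uniformly in $n$, precisely because the above exponent is $>-1$.

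Granting this compact embedding, existence follows by constrained minimization. Set $S:=\inf\big\{\|u\|^2:\ u\in H,\ \int_\B|x|^\a|u|^{p+1}\,dx=1\big\}$; a minimizing sequence is bounded in $H$, hence (up to a subsequence) converges weakly in $H$ and strongly in $L^{p+1}(\B,|x|^\a dx)$, so the limit $u$ still satisfies the constraint and, by weak lower semicontinuity, attains $S$. Replacing $u$ by $|u|$ (which has $\|\,|u|\,\|\le\|u\|$ and lies on the constraint) we may take the minimizer nonnegative, and the Lagrange multiplier rule gives $-\D u=\mu|x|^\a u^p$ in $\B$ with $\mu=S>0$; the rescaled function $v:=\mu^{1/(p-1)}u$ then solves $-\D v=|x|^\a v^p$ with $v=0$ on $\pa\B$. (Alternatively, the compact embedding makes $J$ satisfy the Palais--Smale condition and have the mountain--pass geometry, since $J(u)\ge\tfrac14\|u\|^2$ for $\|u\|$ small while $J(tu_0)\to-\infty$ as $t\to+\infty$ for any fixed $u_0\ne0$; the Mountain Pass Theorem produces a nonzero critical point directly.)

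Finally, positivity and regularity: since $\a>0$ the weight $|x|^\a$ is bounded and continuous on $\overline\B$, so from $v\in H^1_0(\B)$ and the equation a Moser/bootstrap iteration (the weight only helping) yields $v\in L^\infty(\B)\cap C(\overline\B)$ and $v\in C^2(\B\setminus\{0\})$, so $v$ is a classical solution away from the origin with $-\D v=|x|^\a v^p\ge0$ in the weak sense throughout $\B$; as $v\ge0$ and $v\not\equiv0$, the strong maximum principle forces $v>0$ in $\B$. The \textbf{main difficulty} of the whole argument is the compactness asserted in the second step: it is what permits $p$ to range over the entire interval $(1,p_\a)$ — in particular above the Sobolev threshold $2^*-1=\frac{N+2}{N-2}$ — and it must be extracted from the fact that the weight $|x|^\a$ degenerates exactly at the origin, which is the only place where a radial minimizing (or Palais--Smale) sequence could otherwise concentrate.
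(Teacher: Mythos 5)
Your argument is essentially Ni's original proof, which the paper merely cites (describing it only as an application of the Mountain Pass Lemma in a space of radial functions): the radial pointwise bound $|u(r)|\le c_N r^{-(N-2)/2}\|u\|$ and the resulting compact weighted embedding up to $p_\a$ are precisely the crux, and both your constrained-minimization and your mountain-pass variants are standard realizations of it. There is, however, a small but genuine gap in the compactness step: away from the origin you invoke Rellich on the annuli $\{\d\le|x|\le 1-\d\}$, but Rellich--Kondrachov gives compact embedding into $L^q$ only for $q<2^*$, whereas here $p+1$ may well exceed $2^*$ (the whole point is that $p$ can be supercritical). To make this step work you must again use the radial structure: the same pointwise bound, together with $\int_\d^1|u'|^2\,dr\le\d^{1-N}\|u\|^2$, shows that a bounded sequence in $H^1_{0,\mathrm{rad}}(\B)$ is uniformly bounded and $\tfrac12$-H\"older on $\{|x|\ge\d\}$ all the way up to $|x|=1$, hence precompact in $C^0$ there by Arzel\`a--Ascoli; alternatively, the uniform $L^\infty$ bound on the annulus upgrades Rellich's $L^2$-compactness to $L^q$-compactness for every $q<\infty$ by interpolation. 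Combined with the tightness near the origin that you correctly extract from the exponent computation, this yields the compact embedding $H^1_{0,\mathrm{rad}}(\B)\hookrightarrow L^{p+1}(\B,|x|^\a dx)$ for all $1<p<p_\a$, and the rest of your argument goes through as written.
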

This result is achieved by the application of the Mountain Pass Lemma in a space of radial
functions, and the solution obtained is therefore radial. It is also unique among radial functions, see \cite{NN}. In all the following we shall denote by $u_p$ this radial solution.

Besides, since the term $|x|^{\a}$ is radially increasing, the moving plane method of Gidas, Ni, Nirenberg cannot be applied: non radial solutions exist and the symmetry breaking occurs.
In the literature there are some results on the existence of non-radial solutions of \eqref{1}, see for example \cite{BS}
and \cite{S} where the authors find nonradial solutions minimizing the functional associated to \eqref{1} in some suitable symmetric spaces, and \cite{PS} and \cite{P} where nonradial solutions are constructed by the well known Liapunov-Schmidt finite dimensional reduction method, for $p=2^*-1-\epsilon$. These last solutions concentrate at the boundary $\de B$ as $\epsilon\to 0$.\\
Smets, Su and Willem in \cite{SSW} 
investigated minimal energy solutions, in the subcritical case. They proved that the ground state solution is nonradial, provided that $\a$ is above a critical value $\a^*(p)$. Moreover $\a^*(p)$ goes to  zero as $p\to \frac{N+2}{N-2}$ and $\a^*(p)$ goes to $+\infty$ as $p\to 1$, showing that, for $\a$ fixed, the ground state solution is radial. In \cite{CP} the authors studied the asymptotic behavior of the ground state solution as $p\to 2^*-1$. Again these solutions do concentrate on the boundary of $\B$, (at a single point).
Hence problem \eqref{1} has at least two solutions, for $\a$ large enough fixed and $p$ in a compact subset of $(1,\frac{N+2}{N-2})$.
For some existence results in more general domains see \cite{cowan13supercritical} and \cite{GG}.

In this paper we want to find 
nonradial solutions of \eqref{1} studying the bifurcation from the radial solution, when the exponent $p$ varies in the range of existence $(1,p_\a)$, for fixed $\a$, and obtain the following result:
\begin{theorem}\label{tbif}
Let $\alpha\in (0,1]$ be fixed. Then there exists at least one exponent $\bar p\in (1,p_{\a})$ such that a nonradial bifurcation occurs at $(\bar p,u_{\bar p})$. The bifurcating solutions are positive and form a continuum which is unbounded in the Holder space $C^{1,\g}_0(\overline \B )$.
\end{theorem}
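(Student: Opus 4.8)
The plan is to apply global bifurcation theory (in the spirit of Rabinowitz, or its variant using the Leray–Schauder degree with $\mathbb Z_2$-equivariance to detect symmetry breaking) along the radial branch $p\mapsto u_p$, viewed inside a space of functions with a fixed partial symmetry. First I would reformulate \eqref{1} as a fixed-point problem: writing $u = u_p + v$ with $v$ in a suitable Hölder space $C^{1,\g}_0(\overline\B)$, the equation becomes $v = T(p,v)$ where $T(p,v) = (-\Delta)^{-1}\big(|x|^\a(u_p+v)^p\big) - u_p$ is compact. The linearization at $v=0$ is $L_p = I - (-\Delta)^{-1}\big(p|x|^\a u_p^{p-1}\,\cdot\,\big)$, so bifurcation points are the exponents $p$ at which the linearized operator $-\Delta w = p|x|^\a u_p^{p-1} w$, $w=0$ on $\pa\B$, has a nontrivial solution; equivalently, the value $p$ enters the spectrum of the weighted eigenvalue problem. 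Separating variables via spherical harmonics $Y_k$ of degree $k$, one reduces to a family of singular ODE eigenvalue problems on $(0,1)$ indexed by $k\ge 1$ (the $k=0$ mode corresponds to radial perturbations and must be excluded), and one must show that some $k\ge 1$ mode is ``activated'' for a suitable $\bar p$.

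The core analytic step is a spectral/Morse-index argument. Following the approach used for Lane–Emden type problems, I would let $\Lambda_1(p)$ denote the first eigenvalue of the linearized operator $-\Delta - p|x|^\a u_p^{p-1}$ on all of $H^1_0(\B)$ (no symmetry restriction) and $\Lambda_1^{rad}(p)$ its first eigenvalue on radial functions. Since $u_p$ is a Mountain Pass solution in the radial class, $\Lambda_1^{rad}(p)<0 \le \Lambda_2^{rad}(p)$ for all $p$, so the radial Morse index is exactly $1$. On the other hand, one expects the full Morse index of $u_p$ to be strictly larger than $1$ for $p$ close to the critical exponent $p_\a$: here the weight $|x|^\a$ plays the decisive role, enlarging the ``effective dimension'' and, through the Hénon-type rescaling near the origin or near $\pa\B$, forcing the appearance of unstable directions carried by nonradial spherical harmonics. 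The point at which a nonradial eigenvalue $\Lambda(p)$ crosses zero is a candidate bifurcation point $\bar p$; one checks that the crossing is transversal (odd-order, which here will follow from simplicity of the relevant eigenvalue within a fixed spherical-harmonic sector, using that $u_p$ is radial so the linearization block-diagonalizes and each $k$-block is a regular Sturm–Liouville problem with simple eigenvalues).

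To upgrade this local crossing to a genuine nonradial bifurcation and to an unbounded continuum, I would work in the subspace $X_k = \{v\in C^{1,\g}_0(\overline\B): v(x)=\vartheta(|x|)\,Y_k(x/|x|)$-type symmetry$\}$ — concretely, impose a symmetry (e.g. invariance under a subgroup of $O(N)$ together with an odd reflection) chosen so that $X_k$ contains no radial functions except $0$, contains the eigenfunction found above, and so that $u_p$ is the unique radial solution lying in $X_k$. Then the Rabinowitz global bifurcation theorem applied to $v = T(p,v)$ restricted to $\R\times X_k$ yields a continuum $\mathcal C$ of solutions emanating from $(\bar p,0)$ that is either unbounded or returns to another trivial point $(p',0)$. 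Positivity along $\mathcal C$ (at least near the bifurcation and then propagated) comes from $u_p + v >0$ for small $v$ together with a maximum-principle/continuation argument showing solutions cannot touch zero inside $\B$ while staying bounded. To rule out the ``return'' alternative, I would use that the only trivial solutions are the radial ones $(p,u_p)$ and that, for $p$ outside a specific range, the linearized operator has no kernel in $X_k$ (the nonradial eigenvalues stay away from zero), so $\mathcal C$ cannot reconnect to the radial branch; hence $\mathcal C$ is unbounded in $\R\times C^{1,\g}_0(\overline\B)$, and since the interval $(1,p_\a)$ is bounded and a priori bounds hold on compact $p$-subintervals, unboundedness forces the $C^{1,\g}_0$-norm of $u$ along $\mathcal C$ to blow up (or $p$ to approach the endpoints $1$ or $p_\a$). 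The main obstacle I anticipate is the spectral step: proving rigorously that a nonradial eigenvalue of the weighted linearized problem actually crosses zero somewhere in $(1,p_\a)$, i.e. computing/estimating the Morse index of $u_p$ and exploiting the role of $\a\in(0,1]$ and the exponent $p_\a$, which requires a careful asymptotic analysis of $u_p$ and of the ODE eigenvalue problems as $p\to p_\a$ (and as $p\to 1$) — this is where the hypothesis $\a\le 1$ is likely used.
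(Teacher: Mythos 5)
Your overall strategy --- linearize at $u_p$, decompose along spherical harmonics, detect a change of Morse index, restrict to a symmetry class where the relevant eigenvalue is simple, and apply global degree-theoretic bifurcation --- matches the paper's architecture. There are, however, two genuine gaps.

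The first and most serious one is in how you rule out the ``return'' alternative of the Rabinowitz theorem. You propose to argue that ``for $p$ outside a specific range, the linearized operator has no kernel in $X_k$, so $\mathcal C$ cannot reconnect to the radial branch.'' This does not work: the branch can perfectly well reconnect to the radial curve at \emph{another} degeneracy point inside that range, and nothing you have said forbids this. Since you only know that $\Lambda_{1,1}(p)-1$ has a zero (in the paper's notation), not that it has a unique zero, you must account for the possibility of finitely many degeneracy points $p_1<\dots<p_m$. The paper handles this by a parity count: it first shows (via real-analyticity of $p\mapsto u_p$, Proposition \ref{deg-points}) that there are only finitely many degeneracy points, then that the Morse index changes from $1$ to $N+1$ across the whole interval, hence there is an \emph{odd} number of Morse-index changing points (Proposition \ref{therearemicp}); finally, by an Ize-type degree argument (Proposition \ref{p1}), any \emph{bounded} branch contains an \emph{even} number of Morse-index changing points. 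The odd/even mismatch forces at least one branch to be unbounded. Without this parity argument, or some substitute for it, your proof does not close.

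The second gap is one you honestly flag yourself: the assertion that the full Morse index exceeds $1$ for $p$ near $p_\a$ is left as an expectation, not a proof. This is in fact the analytic heart of the paper (Proposition \ref{uniq2}, together with Theorem \ref{t1} restricting which modes can destabilize, which is where $\a\le 1$ enters through the estimate $\L_{1,2}(p)>1$). A minor further point: radial nondegeneracy does not follow merely from the Mountain Pass characterization (that only gives $\L^{\mathrm{rad}}_2\ge 0$); the paper needs the auxiliary function $z_p=r u_p'+\tfrac{2}{p-1}u_p$ together with the variational inequality of Lemma \ref{der-seconda} to exclude a radial kernel (Proposition \ref{lin-rad}). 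You should not take radial nondegeneracy for granted, since it is used both to ensure that the bifurcating solutions are nonradial and in the analyticity argument underlying the finiteness of degeneracy points.
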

Unfortunately we don't know if such bifurcation occurs at $\bar  p$ greater or less than $2^*$, and if the branch lives for $p$ above the critical Sobolev exponent. 
For sure we can say that the nonradial solutions we find in Theorem \ref{tbif} do not coincide with those found by \cite{BS, P, PS, S} for reasons of symmetry. Indeed our solutions inherit some of the symmetries of the domain. Then, or they coincide with the ground state solutions of \cite{SSW}, or they give rise to other new solutions.
To our knowledge, the breaking of symmetry given by bifurcation from the radial solution is observed here for the first time, in the framework of the Henon problem.
An analogous effect has been found about  the problem $-\Delta u=u^p$ in an annulus in \cite{GGPS, G, GP}. The authors proved, among other results, the bifurcation of infinitely many global branches from the unique positive radial solution.
The basic idea is that a change in the Morse index of the radial solution causes a change in the Leray-Schauder degree of an associated map, and then bifurcation occurs.
Their techniques partially apply to problem \eqref{1} because  the term $|x|^{\a}$ acts as the presence of a hole in $\B $, as previously observed by Serra in \cite{S}.
The difference here is that the Morse index remains bounded.

In Section \ref{s2} we study the linearized operator at the radial solution,  and characterize the degeneracy points - which are the candidates for the bifurcation - and the Morse index of $u_p$ by means of the first eigenvalue of a suitable Sturm-Liouville problem (see Theorem \ref{t1}).
The change in the Morse index is  a byproduct of the asymptotic behavior of the radial solution $u_p$ as $p\to 1$ and as $p\to p_{\a}$, analyzed in Section \ref{s3}.
We can  prove that it goes from $1$ to $N+1$, as $p$ goes from $1$ to $p_\a$, provided that $0<\alpha\le 1$ (see Theorem \ref{uniq} and Proposition \ref{uniq2}).  
Even though we do not believe that the assumption $0<\alpha\le 1$ is sharp, there is evidence that some upper bound on the value of $\alpha$ is necessary.
Actually the paper \cite{GGN13nonradial} addresses to a similar problem in the whole space $\R^n$, for $p=p_\a$; in that case the Morse index is nondecreasing w.r.t.~$\alpha$, and changes as $\alpha$ crosses the even integers. 
Some of the results stated in Section \ref{s3} are interesting by themselves:  in Theorem \ref{uniq} we prove uniqueness of the solution of \eqref{1} for $p$ near $1$. This uniqueness result provides an alternative proof of the asymptotic behavior of $\alpha^*(p)$ (see \cite{SSW}) for $p \to 1$. 
 Finally Theorem \ref{tbif} is proved in Section \ref{s4}. In the Appendix we give the details of some known facts that we use in Section \ref{s2}.

\section{Preliminaries on the radial solutions}\label{s2}
In this section we will prove some results on $\up$, the radial solution of the problem \eqref{1}, by studying the related linearized operator.
In particular we address to degeneracy and characterize that exponents $p$ such that the linearized problem
\begin{equation}
\label{L}\left\{
\begin{array}{ll}
-\Delta v = p|x|^{\a}u_p^{p-1}v & \hbox{ in } \B ,\\
v=0 & \hbox{ on }\pa \B ,
\end{array}\right.
\end{equation}
has nontrivial solutions. We also compute  the Morse index of the solution $u_p$, i.e.~ is the number of negative eigenvalues of the standard eigenvalue problem linked to \eqref{L}, each counted with its multiplicity.
To these purposes it is convenient to consider a slightly different  eigenvalue problem:
\begin{equation}\label{A}
\left\{\begin{array}{ll}
-\Delta v=\L p |x|^{\a}\up^{p-1} v  & \hbox{ in }\B \\
v=0 & \hbox{ on }\de \B 
\end{array}\right.
\end{equation}
where $\L$ is a real number. It is obvious that problem \eqref{L} has a nontrivial solution if and only if  problem \eqref{A} admits $\L=1$ as an eigenvalue.
Besides the Morse index of  $\up$ coincides with the number of eigenvalues of \eqref{A} less than $1$ (counted with their multiplicity). This straightforward  relation is explained in details in Lemma \ref{equiv-Morse-index} of the Appendix. 
By taking advantage of the radial symmetry, one can deal with a family of one-dimensional problems.
\begin{remark}\label{passaggioa1dim}
  Let $Y_{kj}(\theta)$ be the spherical harmonic functions, i.e.~the solution to
\[
-\Delta_{S^{N-1}}Y_{kj}(\theta)=\mu_k Y_{kj}(\theta) \quad \text{ for }j=1,\dots,m(k).
\]
Here $-\Delta_{S^{N-1}}$ is the Laplace-Beltrami operator on the $(N-1)$-dimensional sphere $S^{N-1}$, $\mu_k=\frac{N+2k-2}{N+k-2}$ is its sequence of eigenvalues, and $m(k)=\binom{k+N-2}{k}$ is the dimension of the relative eigenspace.
We decompose solutions to \eqref{A} along the spherical harmonic functions and write
\[
v(r,\theta)=\sum_{k,j} \psi_{kj}(r)\,Y_{kj}(\theta), \]
where $ \psi_{kj}$ is the projection of $v$ along $Y_{kj}$. 
Inserting this formula in \eqref{A}, one realizes that the eigenvalues problem \eqref{A} is in correspondence with the family of one-dimensional  eigenvalues problems
\begin{align}\label{Arad}
\left\{\begin{array}{lr}
 -(r^{N-1}\psi_k')'+\mu_k r^{N-3}\psi_k(r)=\L p r^{N-1+\a}\up^{p-1} \psi_k(r)   &  \quad \mbox{ for all } r\in (0,1),   \\
 \psi_k(0)=0\; ,\;\psi_k(1)=0 & \mbox{ if } k>0, \\
\psi_0'(0)=0\; ,\; \psi_0(1)=0, & \mbox{ if } k=0.
\end{array}\right.\end{align}
Indeed $\L$ is an eigenvalue for \eqref{A} if, and only if, there exists at least one $k$ such that $\L$ is an eigenvalue for \eqref{Arad}.
Moreover the dimension of the relative eigenspace is obtained by summing the multiplicity $m(k)$, on all $k$ so that $\L$ is an eigenvalue for \eqref{Arad}.
\end{remark}
 For every fixed $p\in(1,p_\a)$ and $k\in\N$, the eigenvalue problem \eqref{Arad} is of Sturm-Liouville type, and therefore it has a sequence of simple eigenvalues $\L_{i,k}(p)$, $i\in \N$.
As regards both degeneracy and Morse index, only the first eigenvalue related to the first radial mode $\mu_1=N-1$ plays a role.
\remove{, i.e.~the first eigenvalue of
We show that only the first eigenvalue related to the first radial mode $\mu_1=N-1$ plays a role, i.e.~the first eigenvalue of
\begin{equation}\label{Arad1}
\left\{\begin{array}{ll}
-(r^{N-1}\psi_k')'+(N-1) r^{N-3}\psi (r)=\L p r^{N-1+\a}\up^{p-1} \psi (r)  , \quad & \mbox{ for all } r\in (0,1)\\
\psi(0)=0\,,\,\psi_k(1)=0. &
\end{array}\right.
\end{equation}}
This fact, which heavily depends on the assumption $\alpha\le 1$,  will be crucial to  depict the asymptotic behavior of the Morse index as $p\to 1$ and $p\to p_\a$ in next Section. 
%
\begin{theorem}\label{t1}
Let $1<p<p_{\a}$  and  $0<\a\leq 1$. Then $\up$ is degenerate if and only if $\L_{1,1}(p)=1$.
Moreover its Morse index  can take only two values: it is equal to $1$ if $\L_{1,1}(p)\ge  1$, or equal to $N+1$ if $\L_{1,1}(p)< 1$.
\end{theorem}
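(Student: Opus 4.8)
The plan is to analyze separately the contribution of each mode $k$ to degeneracy and to the Morse index, and to show that the relevant information is entirely carried by the first eigenvalue $\L_{1,k}(p)$ as $k$ varies, with a monotonicity in $k$ that collapses everything onto the modes $k=0$ and $k=1$. First I would recall from Remark \ref{passaggioa1dim} that $\up$ is degenerate iff $1$ is an eigenvalue of \eqref{Arad} for some $k$, and that the Morse index equals $\sum_k m(k)\cdot\#\{i:\L_{i,k}(p)<1\}$. Since for each fixed $k$ the problem \eqref{Arad} is a regular Sturm--Liouville problem (after the usual change of variables removing the singular weight, or working in the appropriate weighted space), the eigenvalues $\L_{1,k}(p)<\L_{2,k}(p)<\cdots$ are simple, and by the variational characterization $\L_{1,k}(p)$ is strictly increasing in $\mu_k$, hence in $k$. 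The crucial monotonicity step is: $\L_{1,2}(p)\ge 1$ for all $1<p<p_\a$ when $0<\a\le 1$, so that no mode $k\ge 2$ ever contributes a negative eigenvalue and no such mode can produce degeneracy. This reduces the whole statement to understanding $\L_{1,0}(p)$ and $\L_{1,1}(p)$.

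Next I would pin down the two low modes. For $k=0$ the function $\up$ itself solves $-(r^{N-1}\up')'=r^{N-1+\a}\up^{p}$, so $v=\up$ is a positive solution of \eqref{Arad} with $k=0$ and $\L=1/p<1$; since a positive eigenfunction must be the first one, this gives $\L_{1,0}(p)=1/p<1$, contributing exactly $m(0)=1$ to the Morse index, and moreover $\L_{2,0}(p)>\L_{1,0}(p)$ — one then checks $\L_{2,0}(p)\ge 1$, again exploiting $\a\le1$, so that $k=0$ contributes exactly $1$. For $k=1$ the eigenvalue $\L_{1,1}(p)$ is a genuine variable: it may be $\ge 1$ (then the mode $k=1$ contributes $0$ and the total Morse index is $1$) or $<1$ (then $\L_{1,1}<1\le\L_{2,1}$, so this mode contributes $m(1)=N$, giving total Morse index $1+N=N+1$). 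Degeneracy of $\up$ means $1$ is an eigenvalue of some \eqref{Arad}: it cannot be $\L=1=1/p$ for $k=0$ (impossible since $p>1$) nor $\L_{i,0}$ with $i\ge2$ nor any $\L_{i,k}$ with $k\ge2$ (all $\ge1$, and when equal to $1$ one must rule this out — this is where $\a\le1$ gives strict inequality except possibly for $\L_{1,1}$), so degeneracy forces $\L_{1,1}(p)=1$, and conversely.

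The main obstacle is the key monotonicity/strict-inequality step: proving $\L_{1,2}(p)\ge 1$ (and $\L_{2,0}(p)\ge1$) for every admissible $p$, together with the fact that no $\L_{i,k}$ with $(i,k)\ne(1,1)$ can equal $1$. I expect this to hinge on a Hardy-type or Picone-type comparison: one tests \eqref{Arad} for $k=2$ against a suitable competitor built from $\up$, $\up'$ and the known radial solution of the $\up^p$-problem, using that $\mu_2-\mu_1$ relates to the Laplace--Beltrami gap and that $\a\le1$ controls the weight $r^\a$ against the algebraic decay forced by $N\ge3$; essentially one shows $x\cdot\na\up$ (which vanishes on $\de\B$ and changes sign once) furnishes, together with the $Y_{1j}$, an eigenfunction at $\L$ precisely controlled by the radial ODE, and that moving to $\mu_2$ strictly raises the Rayleigh quotient above the value attained at $p\to p_\a$. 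I would organize the argument around: (i) Sturm--Liouville generalities and simplicity; (ii) strict monotonicity of $\L_{1,k}$ in $k$; (iii) explicit identification $\L_{1,0}=1/p$; (iv) the bound $\L_{1,2}\ge1$ under $\a\le1$; (v) assembling (i)--(iv) into the degeneracy characterization and the dichotomy for the Morse index.
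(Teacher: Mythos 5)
Your overall plan — reduce to the modes $k=0$ and $k=1$ by showing that $\L_{1,k}(p)\ge 1$ for $k\ge 2$, identify $\L_{1,0}=1/p$, and let $\L_{1,1}$ carry the dichotomy — matches the structure of the paper's proof (Steps 1--5), and your guess that the bound $\L_{1,2}(p)\ge 1$ is the hard step where $\a\le1$ enters is exactly right. The paper proves that bound by multiplying the eigenvalue equation for $\psi_{1,2}$ by $w_p=-u_p'$, multiplying the ODE for $w_p$ by $\psi_{1,2}$, subtracting the two Green-type identities, and then controlling the auxiliary quantity $g(r)=r^{1+\a}u_p^p/\bigl[(N+\a)w_p\bigr]$ to show $0<g(r)<1$; this is close in spirit to your ``Picone-type comparison against $x\cdot\nabla u_p$,'' although the details would need to be worked out to see that the signs come out as needed.

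There is, however, a genuine gap in your reduction. For the mode $k=1$ you write ``$\L_{1,1}<1\le\L_{2,1}$'' as though the bound $\L_{2,1}(p)\ge 1$ were automatic once $\L_{1,2}(p)\ge 1$ and $\L_{2,0}(p)\ge 1$ are known, but the Sturm--Liouville eigenvalues $\L_{i,k}$ for different $k$ do not interlace, so monotonicity in $k$ (via $\mu_k$) only controls first eigenvalues, not the second eigenvalue of mode $1$ against the first of mode $2$. This step is nontrivial and requires its own argument. The paper resolves it (Step 4) by restricting to the space $X$ of $O(N-1)$-invariant functions, where by Smoller--Wasserman each spherical-harmonic eigenspace is one-dimensional; then $\L_{1,1}$ is the second eigenvalue of the linearized problem in $X$, and the \emph{third} eigenvalue cannot be $\L_{2,1}$ because the would-be eigenfunction $\psi_{2,1}(|x|)Y_1(\theta)$ would have four nodal domains, contradicting Courant's nodal theorem. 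So the third eigenvalue is either $\L_{2,0}$ or $\L_{1,2}$, both already $>1$, forcing $\L_{2,1}>1$. Without an argument of this kind your accounting of the Morse index and, more importantly, your claim that degeneracy can only occur via $\L_{1,1}(p)=1$ are incomplete: you have not ruled out $\L_{2,1}(p)=1$. A minor additional remark: the paper's proof that $\L_{i,0}>1$ for $i\ge 2$ (via the minimization inequality of Lemma \ref{der-seconda} and the function $z_p=ru_p'+\frac{2}{p-1}u_p$) does not in fact use $\a\le1$; that hypothesis enters only through the $k=2$ bound and, indirectly through the Courant argument, the $k=1$ second eigenvalue.
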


\remove{
Problem (\ref{Arad}) is a Sturm-Liouville eigenvalues
problem and so, for any $k\geq 0$ fixed, it has a sequence of eigenvalues
$\L_{i,k}$, $i\in \N$, which are simple.\\
Moreover the eigenfunction corresponding to
$\L_{i,k}$ ($k$ fixed) has exactly $i$ zeros, (see for example \cite{W}).\\
In the sequel we will write $\L_{i,k}$ and $\psi_{i,k}$ to
denote the eigenvalues and eigenfunctions respectively of (\ref{Arad})
related to some $k$. \\
We consider first the radial {\em mode} $k=0$, namely
$\mu_k=0$ in \eqref{Arad} and we will prove the radial solution $\up$ is radially nondegenerate, see Lemma \ref{lin-rad}. After that we will
focus on the other cases, i.e. $k\geq 1$ and we will characterize the points of degeneracy of $\up$ and the Morse index of the radial solution $\up$, see Lemma \ref{l-degen}.\\
Before doing this we 
rewrite equation \eqref{1} in radial coordinates:
\begin{equation}\label{1rad}
\left\{
\begin{array}{ll}
-(r^{N-1}\up')'=r^{N-1+\a}u_p^{p}& \hbox{ for }r\in (0,1),\\
\up'(0)=0 & \up(1)=0.
\end{array}\right.
\end{equation}
Integrating on $(0,r)$ with $r\in (0,1)$ we have
\begin{equation}\label{u'}
\up'(r)=-\frac 1{r^{N-1}}\int_0^r s^{N-1+\a}\up^p(s)\, ds
\end{equation}
so that $\up'(r)<0$ in $(0,1)$ and the solution $\up$ is strictly decreasing. Moreover the Hopf boundary Lemma implies that also $\up'(1)<0$.\\
In the sequel we will use the space $H^1_{0,rad}(\B )$ i.e. the space of radial functions of $H^1_0(\B )$. It is standard to see that letting $W:=\{\phi\in C^1_c([0,1])\,\,:\,\,supp(\phi)\subseteq [0,1)\, \text{ and }\phi'(0)=0\}$, then
$H^1_{0,rad}(\B )$ is the completion of $W$ under the norm $\nor \phi\nor=\left(\int_0^1r^{N-1}\phi^2 \, dr\right)^{\frac 12}$. Then
$$H^1_{0,rad}(\B ):=\{v\in H^1((0,1))\,\text{ such that }\int_0^1 r^{N-1}(v')^2\,dr<\infty\, \text{ and }v'(0)=0\, , v(1)=0\}.$$
}
\begin{proof}
The proof 
is split in several steps. With Remark \ref{passaggioa1dim} in mind, we analyze separately any radial mode $\mu_k$, and  study the related one dimensional eigenvalue problem \eqref{Arad}. \\

\paragraph{Step 1 - $k=0$.}
As $\mu_0=0$, investigating the first radial mode  \eqref{Arad} means looking for radial solutions to \eqref{L}.
The function $\up$ is an eigenfunction corresponding to the first eigenvalue $\L_{1,0}=1/p<1$. We show that  $\L_{i,0}>1$ for all $i\ge 2$.
In doing so, we get  that  \eqref{L} does not have nontrivial radial solutions, i.e.~$u_p$ is radially nondegenerate. This result deserves to be stated separately, because it is of some interest by itself.
\begin{proposition}\label{lin-rad}
The linearized problem \eqref{L} does not admit any nontrivial radial solution.
\end{proposition}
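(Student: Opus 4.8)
The plan is to show that the one–dimensional eigenvalue problem \eqref{Arad} with $k=0$ has first eigenvalue $\L_{1,0}=1/p<1$, with the positive eigenfunction $\psi_{1,0}=\up$, and that consequently all higher eigenvalues $\L_{i,0}$ with $i\ge 2$ are strictly larger than $1$; this forces $\L=1$ not to be an eigenvalue of the radial mode, which is exactly the assertion that \eqref{L} has no nontrivial radial solution. First I would verify that $\up$ itself solves \eqref{Arad} with $k=0$ and $\L=1/p$: differentiating the radial equation $-(r^{N-1}\up')'=r^{N-1+\a}\up^{p}$ one sees that $-(r^{N-1}\up')'=\tfrac1p\, p\, r^{N-1+\a}\up^{p-1}\up$, so $(\L,\psi)=(1/p,\up)$ is an eigenpair; moreover $\up>0$ on $[0,1)$, so it is the eigenfunction with no interior zeros, hence (by the Sturm–Liouville oscillation theory, each $\L_{i,0}$ simple with eigenfunction having exactly $i-1$ interior zeros) it is the \emph{first} eigenfunction and $\L_{1,0}=1/p$.

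Next I would use strict monotonicity of the Sturm–Liouville spectrum together with $p>1$: since $\L_{1,0}=1/p<1<p\le$ anything is not enough by itself, the real point is to rule out $\L_{i,0}=1$ for $i\ge2$. Because the eigenvalues are simple and strictly increasing, $\L_{2,0}>\L_{1,0}=1/p$, but this only gives $\L_{2,0}>1/p$, not $\L_{2,0}>1$. So the genuine argument must invoke a comparison: I expect one compares \eqref{Arad} for $k=0$ with the \emph{full} linearized problem, or one exploits that $\L=1$ being an eigenvalue of the $k=0$ problem with an eigenfunction $\psi$ having at least one interior zero would contradict a known fact — namely that the Morse index of $\up$ in the radial class is exactly $1$. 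Concretely, $\L_{1,0}=1/p<1$ contributes one negative direction, and if some $\L_{i,0}\le 1$ for $i\ge2$ the radial Morse index would be $\ge2$; but the Mountain Pass characterization of $\up$ (Theorem \ref{tNi}, via \cite{Ni}, \cite{NN}) gives radial Morse index $\le 1$. Hence $\L_{i,0}>1$ for all $i\ge2$, and in particular $\L=1$ is never an eigenvalue of the $k=0$ mode.

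The main obstacle I anticipate is making the ``radial Morse index equals one'' step self-contained rather than quoting it: one wants a direct ODE proof that $\L_{2,0}>1$. A clean route is the substitution $\psi=\up\,\phi$, which (since $\up$ is a positive solution of the $\L=1/p$ problem) transforms \eqref{Arad} with $k=0$ into a problem for $\phi$ whose principal part is $-(r^{N-1}\up^2\phi')'$ and whose potential involves $(\L p-1)r^{N-1+\a}\up^{p-1}$; positivity of $\up$ lets one integrate by parts and show that any eigenfunction orthogonal (in the appropriate weighted sense) to $\up$ must have Rayleigh quotient $>1$, essentially because the weight $r^{N-1+\a}\up^{p-1}$ is, after the change of variables, dominated in the relevant direction. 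I would carry out this weighted Picone/ground-state-substitution computation, check the boundary terms vanish (using $\up'(0)=0$, $\up(1)=0$, $\up'(1)<0$ from Hopf), and conclude. With that in hand the Proposition follows immediately, and it is recorded separately because it also underlies the reduction to the modes $k\ge1$ in the remaining steps of the proof of Theorem \ref{t1}.
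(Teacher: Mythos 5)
There is a genuine gap. The real difficulty in Proposition \ref{lin-rad} is not to show $\L_{2,0}\ge 1$ (that much does follow from the variational/minimality structure of $u_p$, as you correctly sense), but to exclude the borderline case $\L_{2,0}=1$, which is precisely radial degeneracy. Your Morse-index argument does not do this: if $\L_{2,0}=1$ exactly, the radial Morse index --- the count of eigenvalues \emph{strictly} less than $1$ --- is still $1$, not $\ge 2$ as you assert, so the bound ``radial Morse index $\le 1$'' coming from the Mountain Pass characterization is perfectly compatible with $\L_{2,0}=1$. Your alternative ground-state substitution $\psi=\up\,\phi$ likewise stalls: it converts the $k=0$ equation into $-(r^{N-1}\up^2\phi')'=(\L p-1)r^{N-1+\a}\up^{p+1}\phi$, and multiplying by $\phi$ and integrating only yields $\L>1/p$ for nonconstant $\phi$, not $\L>1$. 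Thus both of your routes, as sketched, stop at $\L_{2,0}\ge 1$ and never rule out equality.

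The paper's proof supplies exactly the two missing ingredients. First, it invokes Lemma \ref{der-seconda}, an inequality derived from the \emph{second} variation $Q''_{[\widetilde u_p]}\ge 0$ of the Rayleigh quotient at the radial minimizer; plugging in the second radial eigenfunction $\psi_{2,0}$ (which is $H^1_0$-orthogonal to $\up$, forcing $\int_0^1 r^{N-1+\a}\up^p\psi_{2,0}\,dr=0$) gives $\L_{2,0}\ge 1$ cleanly. Second, and crucially, to exclude $\L_{2,0}=1$ the paper introduces the auxiliary function $z_p:=ru_p'+\tfrac{2}{p-1}u_p$, which solves the linearized radial ODE but has $z_p(0)>0$, $z_p(1)<0$. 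Its first zero $d$ makes $\L=1$ the \emph{first} eigenvalue of the linearized problem on $B_d(0)$; if $\L_{2,0}=1$, the interior zero $r_0$ of $\psi_{2,0}$ plays the same role on $B_{r_0}(0)$. Strict domain-monotonicity of the first eigenvalue forces $d=r_0$, hence $\psi_{2,0}\propto z_p$, contradicting $z_p(1)\ne 0$. Neither this equality-exclusion device nor the specific second-variation inequality appears in your proposal, so the argument is incomplete precisely where the content lies.
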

The proof of Proposition \ref{lin-rad} requests some preliminary knowledge about the radial solution $u_p$, whose proof is postponed  into the Appendix for  reader's comprehension.
\begin{lemma}\label{der-seconda}
Let $\up(r)$, $0\le r\le 1$, be the unique radial solution of \eqref{1}. Then $\up'<0$ for $0<r\le 1$, moreover
\begin{equation}\label{R4}
\int_0^1r^{N-1}(v')^2 \, dr-p\int_0^1 r^{N-1+\a}\up^{p-1}v^2\, dr-(p-1)\frac{\left(\int_0^1 r^{N-1+\a}\up^{p}v\, dr\right)^2}{\int_0^1 r^{N-1+\a}\up^{p+1}\, dr}\geq 0
\end{equation}
for any radial function $v$ in $H^1_0(\B )$.
\end{lemma}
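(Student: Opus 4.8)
The plan is to treat the two assertions of Lemma~\ref{der-seconda} separately. The monotonicity is elementary: rewriting \eqref{1} in radial form as $-(r^{N-1}\up')'=r^{N-1+\a}\up^p$ on $(0,1)$ with $\up'(0)=0$, and integrating over $(0,r)$, gives $\up'(r)=-r^{1-N}\int_0^r s^{N-1+\a}\up^p(s)\,ds$; since $\up>0$ on $[0,1)$ and $\up\in C^1([0,1])$ by elliptic regularity, the integral is strictly positive for every $r\in(0,1]$, so $\up'(r)<0$ there, and in particular $\up'(1)<0$ (also a consequence of the Hopf lemma). The quadratic inequality \eqref{R4} will instead follow from the variational characterization of $\up$ as a constrained minimizer on a radial Nehari manifold.

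Recall that $\up$ is Ni's radial solution (Theorem~\ref{tNi}), obtained as a mountain pass critical point of
\[
J(u)=\tfrac12\int_\B|\nabla u|^2\,dx-\tfrac1{p+1}\int_\B|x|^\a|u|^{p+1}\,dx
\]
on the radial space $H^1_{0,rad}(\B)$. By the standard argument for superlinear functionals the mountain pass level coincides with $\min\{J(u):u\in\mathcal N\}$, where $\mathcal N=\{u\in H^1_{0,rad}(\B)\setminus\{0\}:\int_\B|\nabla u|^2\,dx=\int_\B|x|^\a|u|^{p+1}\,dx\}$ is the radial Nehari manifold, and this minimum is attained precisely at $\up$. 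Differentiating the constraint at $\up$ and using $\up>0$, one sees that $\mathcal N$ is a smooth manifold near $\up$ with tangent space $T_{\up}\mathcal N=\{w\in H^1_{0,rad}(\B):\int_\B|x|^\a\up^p w\,dx=0\}$. Since $\up$ is at the same time an unconstrained critical point of $J$, the Lagrange multiplier vanishes and the constrained Hessian is simply $J''(\up)$ restricted to $T_{\up}\mathcal N$; the second order necessary condition for a constrained minimum then gives $J''(\up)[w,w]=\int_\B|\nabla w|^2\,dx-p\int_\B|x|^\a\up^{p-1}w^2\,dx\ge0$ for all $w\in T_{\up}\mathcal N$. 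Passing to radial coordinates (the common factor $|S^{N-1}|$ cancels) and writing $Q(w):=\int_0^1 r^{N-1}(w')^2\,dr-p\int_0^1 r^{N-1+\a}\up^{p-1}w^2\,dr$, this says $Q(w)\ge0$ whenever $\int_0^1 r^{N-1+\a}\up^p w\,dr=0$.

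To reach an arbitrary radial $v\in H^1_0(\B)$, set $D:=\int_0^1 r^{N-1+\a}\up^{p+1}\,dr$ and $c:=D^{-1}\int_0^1 r^{N-1+\a}\up^p v\,dr$, and split $v=c\,\up+w$, so that $w=v-c\,\up\in T_{\up}\mathcal N$. Testing the radial equation for $\up$ with $w$ and with $\up$ gives $\int_0^1 r^{N-1}\up'w'\,dr=\int_0^1 r^{N-1+\a}\up^p w\,dr=0$ and $\int_0^1 r^{N-1}(\up')^2\,dr=D$, all boundary terms vanishing because $r^{N-1}\up'(r)\to0$ as $r\to0$ and $w(1)=\up(1)=0$. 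Inserting the decomposition into $Q(v)$ and using these identities, the mixed terms cancel and one is left with $Q(v)=(1-p)c^2D+Q(w)$; since $Q(w)\ge0$ by the previous step and $(p-1)c^2D=(p-1)\big(\int_0^1 r^{N-1+\a}\up^p v\,dr\big)^2\big/\int_0^1 r^{N-1+\a}\up^{p+1}\,dr$, rearranging yields \eqref{R4}, with equality exactly when $w\equiv0$, i.e.~when $v$ is a multiple of $\up$.

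The one genuinely delicate point is the input used above, namely that the mountain pass level of $J$ on $H^1_{0,rad}(\B)$ equals the radial Nehari infimum and is attained exactly by $\up$, so that $\up$ may legitimately be regarded as a constrained minimizer and the second order optimality condition applies; it is essential to stay in the radial class, since for large $\a$ the unconstrained ground state is nonradial, as recalled in the Introduction. Everything else — the $L^2$-type orthogonal splitting, the cancellation of the cross terms, and the integrations by parts (in which the weight $r^{N-1}$ disposes of the contributions at $r=0$ and the homogeneous boundary condition of those at $r=1$) — is routine.
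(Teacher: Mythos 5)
Your overall approach is essentially the same as the paper's (a variational second-order argument for the radial ground state), though you package it as constrained minimization on the radial Nehari manifold while the paper works with the second variation of the Rayleigh-type quotient $Q[v]=\int_0^1 r^{N-1}(v')^2dr\big/\big(\int_0^1 r^{N-1+\a}|v|^{p+1}dr\big)^{2/(p+1)}$ at its radial minimizer $\tilde u_p$. The two are mathematically equivalent: your abstract version avoids the explicit computation of $Q''$, but requires the (correct) observation that the Lagrange multiplier vanishes because $u_p$ is also an unconstrained critical point, and that the constraint functional has nonzero derivative at $u_p$.

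There is, however, a sign issue that you should have caught. Your own computation gives
\[ Q(v)=(1-p)c^2D+Q(w)\ \ \Longrightarrow\ \ Q(v)+(p-1)\,\frac{\Bigl(\int_0^1 r^{N-1+\a}\up^{p}v\, dr\Bigr)^2}{\int_0^1 r^{N-1+\a}\up^{p+1}\, dr}\ge 0, \]
which has a \emph{plus} sign in front of $(p-1)$, not the minus sign appearing in \eqref{R4}. You assert that your identity ``rearranges to \eqref{R4}'', but it does not: it gives the opposite sign. A quick sanity check with $v=\up$ shows that \eqref{R4} as printed cannot hold, since the left-hand side then equals $(1-p)D-(p-1)D=-2(p-1)D<0$. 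In fact the paper's own Appendix derivation (from $Q''_{[\tilde u_p]}\ge 0$, combining the $(p+3)\b_p$ and $-4\b_p$ terms into $(p-1)\b_p$) also lands on $+(p-1)$, and the statement \eqref{R4} simply carries a typo. This is harmless for the only place the lemma is invoked — Proposition \ref{lin-rad} with $v=\psi_{2,0}$, where $\int_0^1 r^{N-1+\a}\up^p\psi_{2,0}\,dr=0$ so the last term drops out — but you should have noticed and flagged the discrepancy rather than claiming to have obtained \eqref{R4} verbatim. With that correction made explicit, your proof is sound and slightly cleaner than the paper's explicit $Q''$ expansion.
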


\begin{proof}[Proof of Proposition \ref{lin-rad}]
Arguing by contradiction, let us assume that there exists a nontrivial radial solution $\overline v$ of (\ref{L}). Then $\overline v$
is an eigenfunction of \eqref{Arad} corresponding to $\mu_k=0$ and $\L=1$.
Because $\L_{1,0}=1/p<1$, there should be some $i\ge 2$ so that $\L_{i,0}=1$.\\
The second eigenfunction $\psi_{2,0}$ satisfies
\begin{equation}\label{Arad02}
\left\{\begin{array}{ll}
-(r^{N-1}\psi_{2,0}')'=\L_{2,0} pr^{N-1+\a}\up^{p-1} \psi_{2,0}(r)  & \hbox{ in }(0,1)\\
\psi_{2,0}'(0)=0\,,\,\psi_{2,0}(1)=0.
\end{array}\right.
\end{equation}
Moreover it is orthogonal in $H^1_0(\B )$ to $\psi_{1,0}=\up$ so that, in radial coordinates, we have $\int_0^1 r^{N-1}\up'\psi_{2,0}'\, dr=0$. Using \eqref{Arad02} and integrating by parts, we get
\begin{align*}
&\int_0^1 r^{N-1}\left( \psi_{2,0}'\right)^2\, dr=\L_{2,0}\, p\int_0^1 r^{N-1+\a}\up^{p-1}\psi_{2,0}^2\, dr\\
&\int_0^1 r^{N-1+\a}\up^{p}\psi_{2,0}\, dr=0\label{S2}.
\end{align*}
Taking $v=\psi_{2,0}$ in \eqref{R4}, and inserting these two inequalities gives
\[ \left(\L_{2,0}-1\right)p\int_0^1 r^{N-1+\a}\up^{p-1}\psi_{2,0}^2\, dr\geq 0. \]
Hence $\L_{2,0}\geq 1$ and therefore $\L_{2,0}=1$.
 Further, as $\psi_{2,0}$ is the second eigenfunction,  it has two nodal regions. Let us say that $\psi_{2,0}$ has constant sign on  two intervals $(0,r_0)$ and $(r_0,1)$, with $r_0\in(0,1)$, $\psi_{2,0}(r_0)=0$. This implies in turn that the first eigenvalue of the linearized problem \eqref{A} in the smaller ball $B_{r_0}(0)$ is equal to $1$.
Next, we consider the function $z_p:=ru'_p +\frac2{p-1}u_p$. It satisfies
\begin{equation}\label{zp}
\left\{
\begin{array}{ll}
-(r^{N-1} z_p')'=pr^{N-1+\a}u_p^{p-1}z_p & \hbox{ in }(0,1),\\
z_p(0)>0,\quad
z_p(1)<0 .&\
\end{array}\right.
\end{equation}
Moreover $z_p'(0)=\frac{p+1}{p-1}\up'(0)=0$ so that $z_p$ is a radial function in $H^1(\B )$.
From \eqref{zp} we know that $z_p$ changes sign on $(0,1)$ at least once: let $d\in(0,1)$ be such that $z_p(d)=0$ and $z_p>0$ on $(0,d)$.
The function $z_p$ is an eigenfunction of the linearized problem  \eqref{A} related to the first eigenvalue $1$, in the ball $B_d(0)$. By the strict monotonicity of the first eigenvalue with respect to the inclusion of domains, it follows that $d=r_0$ and therefore $\psi_{2,0}=Cz_p$ for some constant $C\neq 0$. This is not possible since $z_p$ does not satisfy the boundary condition in $r=1$ and proves the Lemma.
\end{proof}

\paragraph{Step 2 - $k=2$.}
For any $p$ and $k=2$, all eigenvalues of problem \eqref{Arad} are greater than 1. It suffices to prove that first eigenvalue $\Lambda_{1,2}(p)>1$ for every $p\in(1,p_{\alpha})$.
\remove{
\noindent In the proof of our main results we will use the space of functions that we will call $O(N-1)$-invariant. We give here the definition of the space $X$ that we will use in Section \ref{s4} so to state the degeneracy result also in this space. We let
\begin{equation}\nonumber
X:=\{v\in C^{1,\g}_0(\overline \B ) \, ,\,
\hbox{s.t. }v(x_1,\dots,x_N)=v(g(x_1,\dots, x_{N-1}),x_N)\, ,\hbox{
  for any }\atop g\in O(N-1)\}
\end{equation}
where $O(N-1)$ is the orthogonal group in  $\R^{N-1}$ and $C^{1,\g}_0(\overline \B )$ is  the set of continuous differentiable  functions
 on $\overline \B $ which vanish on  $\de \B $ and whose first order
 derivatives are H\"older continuous with exponent $\g$.
\begin{lemma}\label{l-degen}
Let $u_p$ be the  radial solution of (\ref{1}).
Then problem (\ref{L}) has a nontrivial solution if and only if
\begin{equation}\label{deg-autov}
\L_{1,1}(p)=1
\end{equation}
where $\L_{1,1}(p)$ is the first eigenvalue of the problem
\begin{equation}\label{Arad11}
\left\{\begin{array}{ll}
-(r^{N-1}\psi_{1,1}')'+(N-1)r^{N-3}\psi_{1,1}(r)=\L_{1,1} pr^{N-1+\a}\up^{p-1} \psi_{1,1}(r)  & \hbox{ in }(0,1)\\
\psi_{1,1}'(0)=0\,,\,\psi_{1,1}(1)=0.
\end{array}\right.
\end{equation}
Moreover, in case \eqref{deg-autov} is satisfied the solutions $v$ to (\ref{L}) can be written as
\begin{equation}\label{sol-lin}
v(x)=\psi_{1,1}(|x|)Y_1(\frac x{|x|})
\end{equation}
where $\psi_{1,1}(r)$ is a first positive eigenfunction of \eqref{Arad11}
and $Y_1(\theta)$ is the eigenfunction of the Laplace-Beltrami operator on  $\cS^{N-1}$ relative to the eigenvalue $\mu_1=N-1$.\\
Finally if we restrict to consider the space $X$ and \eqref{deg-autov} is satisfied, then problem \eqref{L} has (up to a constant multiple) a unique solution.
\end{lemma}
}
To this aim, we introduce the function $w_p:=-\up'$. In radial coordinates, $w_p$ solves
\begin{equation}\label{wp}
\left\{\begin{array}{ll}
-(r^{N-1}w_p')'=\left(-(N-1)r^{N-3}+pr^{N-1+\a}\up^{p-1}-\a r^{N-2+\a}\frac{\up^p}{w_p}\right)w_p  & \hbox{ in }(0,1)\\
w_p(0)=0\,,\quad  w_p(r)>0 \; \hbox{ as } r\in(0,1).
\end{array}\right.
\end{equation}
\remove{We divide the proof in some steps.\\
{\em Step 1-} We start showing that the function $g(r):=\frac{r^{1+\a}\up^p}{(N+\a)(w_p)}$ satisfies
\begin{equation}\label{gr}
0<g(r)<1 \quad \quad\text{ for any }r\in(0,1).
\end{equation}
We have previously observed that $w_p=-\up'>0$ on $(0,1)$ so that the left-side of \eqref{gr} easily follows.
Now, using \eqref{u'} we have
\begin{align*}
\lim_{r\to 0^+}&\frac{r^{1+\a}}{w_p(r)}=\lim_{r\to 0^+}\frac{r^{N+\a}}{\int_0^1 s^{N-1+\a}\up^p(s)\,ds}\\
&=\lim_{r\to 0^+}\frac{(N+\a)r^{N-1+\a}}{r^{N-1+\a}\up^p(r)}=\frac{(N+\a)}{\up^p(0)}.
\end{align*}
Then
$$\lim_{r\to 0^+}g(r)=\lim_{r\to 0^+}\frac{\up^p(r)}{(N+\a)}\lim_{r\to 0^+}\frac{r^{1+\a}}{w_p(r)}=1.$$
Moreover $g(1)=0$. By computation we have
\begin{align}
g'(r)=& \frac 1{(N+\a)(w_p)^2}\left((1+\a)r^{\a}\up^p w_p-pr^{1+\a}\up^{p-1}(w_p)^2-r^{1+\a}\up^p (-\up'')\right)\nonumber\\
=& \frac 1{(N+\a)(w_p)^2}\left((1+\a)r^{\a}\up^p w_p-pr^{1+\a}\up^{p-1}(w_p)^2-r^{1+\a}\up^p \left(\frac{N-1}r \up'+r^{\a}\up^p\right)\right)\nonumber\\
=& \frac 1{(N+\a)(w_p)^2}\left((1+\a)r^{\a}\up^p w_p-pr^{1+\a}\up^{p-1}(w_p)^2+(N-1)r^{\a}\up^pw_p - r^{1+2\a}\up^{2p}\right) \nonumber\\
=& \frac 1{(N+\a)(w_p)^2}\left((N+\a)r^{\a}\up^p w_p-pr^{1+\a}\up^{p-1}(w_p)^2 - r^{1+2\a}\up^{2p}\right) \nonumber\\
=&\frac{(N+\a)}r g(r)-p\frac{w_p}{\up}g(r)-\frac{(N+\a)}r (g(r))^2\nonumber\\
=&-\frac{(N+\a)}r g(r)\left(g(r)-1+p\frac{rw_p}{(N+\a)\up}\right).\label{g'}
\end{align}
Suppose $\hat r$ is a critical point of $g(r)$ in the interior of the interval $(0,1)$, then $g'(\hat r)=0$ and hence
either $g(\hat r)=0$ or $g(\hat r)=1-p\frac{\hat rw_p(\hat r)}{(N+\a)\up(\hat r)}<1$. This implies that $\sup g(r)$ is achieved in $r=0$ and proves \eqref{gr}.\\[.35cm]}
Next, let  $\psi_{1,2}$  a first positive eigenfunction of \eqref{Arad} corresponding to $k=2$. Multiplying \eqref{Arad}  for $w_p$ and integrating over $(0,1)$ we get
\begin{align*}
-\int_0^1 (r^{N-1}\psi_{1,2}')' \, w_p\, dr  = \int_0^1 r^{N-3}\psi_{1,2}w_p\left(-2N+\L_{1,2} pr^{2+\a}\up^{p-1}\right)\, dr \\
\intertext{and integrating by parts this yields} 
\int_0^1 r^{N-1}\psi_{1,2}' \, w_p'\, dr +\psi_{1,2}'(1)(-w_p(1))=\int_0^1 r^{N-3}\psi_{1,2}w_p\left(-2N+\L_{1,2} pr^{2+\a}\up^{p-1}\right)\, dr.
\end{align*}
Besides, multiplying \eqref{wp} for $\psi_{1,2}$ and  integrating by parts  over $(0,1)$  we get
\[
\int_0^1 r^{N-1}w_p'\psi_{1,2}'\, dr=\int_0^1 r^{N-3}w_p\psi_{1,2}\left(-(N-1)+ pr^{2+\a}\up^{p-1}-\a r^{1+\a}\frac{\up^p}{w_p} \right)\, dr.
\]
Subtracting the two obtained equalities yields
\[\psi_{1,2}'(1)(-w_p(1))=\int_0^1 r^{N-3}\psi_{1,2}w_p\left[(\L_{1,2}-1)pr^{2+\a}\up^{p-1}-N-1+\a(N+\a)g(r)\right]\, dr\]
where $\displaystyle  g(r)={r^{1+\a}\up^p}/{(N+\a)w_p}$.
Because  $\psi_{1,2}'(1)<0$ and $w_p(1)>0$ by  Hopf boundary Lemma, it follows that
\[
(\L_{1,2}-1)\, p\int_0^1 r^{N-1+\a}\up^{p-1}\psi_{1,2}w_p\, dr>\int_0^1 r^{N-3}\psi_{1,2}w_p\left[N+1-\a(N+\a)g(r)\right]\, dr,
\]
and the proof is completed after checking that
$0 <g(r)<1$ 
for any $r\in(0,1)$.
The first inequality holds because  $w_p=-\up'>0$ by Lemma \ref{der-seconda}.
Concerning the second one, we have that $g(1)=0$ and
\[\lim_{r\to 0^+}g(r)=\lim_{r\to 0^+}\frac{\up^p(r)}{(N+\a)} \; \frac{r^{N+\a}}{-r^{N-1}u_p'(r)}=1.\]
Indeed $-(r^{N-1}\up')'=r^{N-1+\a}u_p^{p}$ by equation  \eqref{1}, so
\begin{align*}
\lim_{r\to 0^+}&\frac{r^{N+\a}}{-r^{N-1}u_p'(r)}=\lim_{r\to 0^+}\frac{(N+\a)r^{N-1+\a}}{r^{N-1+\a}\up^p(r)}=\frac{(N+\a)}{\up^p(0)} .
\end{align*}
At the interior of the segment line $(0,1)$, by computation we have
\begin{align*}
g'(r)=& \remove{\frac 1{(N+\a)(w_p)^2}\left((1+\a)r^{\a}\up^p w_p-pr^{1+\a}\up^{p-1}(w_p)^2-r^{1+\a}\up^p (-\up'')\right)\nonumber\\
=& \frac 1{(N+\a)(w_p)^2}\left((1+\a)r^{\a}\up^p w_p-pr^{1+\a}\up^{p-1}(w_p)^2-r^{1+\a}\up^p \left(\frac{N-1}r \up'+r^{\a}\up^p\right)\right)\nonumber\\
=& \frac 1{(N+\a)(w_p)^2}\left((1+\a)r^{\a}\up^p w_p-pr^{1+\a}\up^{p-1}(w_p)^2+(N-1)r^{\a}\up^pw_p - r^{1+2\a}\up^{2p}\right) \nonumber\\
=& \frac 1{(N+\a)(w_p)^2}\left((N+\a)r^{\a}\up^p w_p-pr^{1+\a}\up^{p-1}(w_p)^2 - r^{1+2\a}\up^{2p}\right) \nonumber\\
=&\frac{(N+\a)}r g(r)-p\frac{w_p}{\up}g(r)-\frac{(N+\a)}r (g(r))^2\nonumber\\
=&} -\frac{(N+\a)}r g(r)\left(g(r)-1+p\frac{rw_p}{(N+\a)\up}\right).
\end{align*}
Hence, in any possible critical point $\hat r$ we have $g(\hat r)=1-p\frac{\hat rw_p(\hat r)}{(N+\a)\up(\hat r)}<1$. This implies that $g(r)$ achieves its global strict maximum at  $r=0$ and completes the proof of Step 2. \\

\paragraph{Step 3 - $k\ge 2$.}
We check that for all $p$ and $k\ge 2$, we have $\L_{i,k}(p)>1$ for any $i\geq 1$.
Again, it suffices to analyze the first eigenvalue $\L_{1,k}(p)$. By the classical Rayleigh-Ritz
variational characterization of the first eigenvalue we have that
\[ \L_{1,k}=\inf_{v\in H^1_{0,rad}(\B )\, v\neq 0}\frac{\int_0^1 r^{N-1}(v')^2 \, dr+\mu_k \int_0^1 r^{N-3}v^2\, dr}{p\int_0^1 r^{N-1+\a}\up^{p-1}v^2\, dr}.
\]
This easily gives that  $\L_{1,k}>\L_{1,2}>1$ for any $k> 2$, and implies in turn that $\L_{i,k}>1$ for any $i\geq 1$ if $k\geq 2$. \\

\paragraph{Step 4 - $k=1$.}
We eventually show that  $\L_{2,1}(p)>1$ for all $p$. If $\L_{1,1}(p)>1$ there is nothing left to prove.
Otherwise, if $\L_{1,1}(p)\le 1$, we take advantage from the Courant Nodal Theorem.
To this aim we  study the problem \eqref{Arad} in the space $X$ of the functions which are invariant with respect to the orthogonal group in $R^{N-1}$, i.e.
\begin{equation}\label{X}
\begin{array}{r}
X:=\big\{v\in C^{1,\g}_0(\overline \B ) \, : \, v(x_1,\dots,x_N)=v(g(x_1,\dots, x_{N-1}),x_N) \\[.1cm]
\hbox{ for any } g\in O(N-1) \big\}
\end{array}\end{equation}
By a result of Smoller and Wasserman, see \cite{SW},  the eigenspace of $-\Delta_{S^{N-1}}$ related to $\mu_k$, in $X$, is one dimensional for
any $k$.  In this way the first eigenvalue $\L_{1,1}(p)$ of \eqref{Arad} related to $k=1$ gives the second eigenvalue $\L_2$ of \eqref{A}, and the
corresponding eigenspace is one-dimensional in $X$.
Next we look at the third  eigenvalue $\L_3$ of (\ref{A}), and investigate to which eigenvalue of (\ref{Arad}) is related to.
It cannot be the second eigenvalue $\L_{2,1}(p)$ corresponding to $k=1$, because the corresponding eigenfunction $\psi_{2,1}(|x|)Y_1(\theta)$ has four nodal domains, and this contradicts the Courant's Nodal Theorem.
So it has to be related either with $\L_{2,0}(p)$ or with $\L_{1,2}(p)$.
Since both $\L_{2,0}(p)$ and  $\L_{1,2}(p)$ are  strictly greater than $1$ (by Step 1 and 2, respectively), we end up with  $\L_3(p)>1$, and eventually $\L_{2,1}(p)>\L_3(p)>1$. \\

\paragraph{Step 5 - Morse index.}
At last we address to the Morse index of $\up$. Two items may happen.
If $\L_{1,1}(p)\ge 1$, then only the first eigenvalue of \eqref{Arad} is nonnegative. As it is related to the first radial mode, its eigenspace has dimension 1 and therefore the Morse index is 1.
Otherwise, $\L_{1,1}(p) <1$, then also the second eigenvalue of \eqref{Arad} is negative, and its multiplicity is equal to $\mu_2=N$ as explained in Remark \ref{passaggioa1dim}. Since there can not be other   negative eigenvalues, we conclude that the Morse index of $\up$ is $N+1$.
\end{proof}

\begin{remark}
The restrictive assumption $\alpha\in(0,1]$ is  needed in  Step 2, in order to prove that $\L_{1,2}>1$. On the other hand, also the arguments of the following steps make use of that inequality. Therefore, removing the assumption $\alpha\in(0,1]$ could, in principle, give rise to a huge increase of the Morse index, caused by eigenvalues of type $\Lambda_{1,k}$ with $k\ge 2$ and/or of type $\Lambda_{i,2}$ with $i\ge 2$, see \cite{GGN13nonradial}.\\
On the other hand, Cowan in \cite{cowan13supercritical} studies the degeneracy of the radial solution without any assumption on $\alpha$ but he does not investigate the Morse index of $u_p$.
\end{remark}

\section{Asymptotic behavior}\label{s3}
In this section we study the behavior of the radial solution $\up$ and of its Morse index, when $p$ is at the ends of the existence range $(1, p_{\a})$.
The study of $p$ close to $1$ is,  to our knowledge, completely new and gives as a byproduct a uniqueness result, stated in Proposition \ref{uniq}.
The case $p$ close to $p_{\a}$  is  studied also in \cite{CP} from a different point of view.

\subsection{Asymptotic behavior as $p$ goes to $1$.}

When the exponent $p$ is close to $1$, it is possible to extend to the Henon problem  the uniqueness result for
\[
\left\{
\begin{array}{ll}
-\Delta u=u^p& \hbox{ in }\Omega,\\
u>0& \hbox{ in }\Omega,\\
u=0 & \hbox{ on }\pa \Omega.
\end{array}\right.\]
This result was first proved by Lin in \cite{Lin} for the least energy solution, and then generalized by Dancer, in \cite{D1}, without any assumption on the energy. See also \cite{Gr} for some related results. The main ingredient of both proofs is the asymptotic behavior of any solution of the problem as $p$ goes to $1$.
By introducing a suitable rescaling, their proof can be adapted  to the Henon problem \eqref{1}, and the following result, which holds for any value of $\a>0$, is obtained.
\begin{theorem}\label{uniq}
Let $\a>0$ be fixed. There exists $\d=\d(\a)>0$ such that, for each $p\in(1,1+\d)$, equation \eqref{1} has a unique solution, which is radial and nondegenerate.
Moreover its Morse index is equal to 1.
\end{theorem}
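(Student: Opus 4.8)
The plan is to adapt the Lin--Dancer strategy for $-\Delta u = u^p$ in $\Omega$ to the Henon problem \eqref{1}, the only new ingredient being a rescaling that absorbs the weight $|x|^\alpha$. First I would fix $\alpha>0$ and argue by contradiction: suppose there is a sequence $p_n\to 1^+$ and solutions $u_n$ of \eqref{1} with $p=p_n$ that are either non-radial, or degenerate, or have Morse index $\ne 1$. The first step is a uniform a priori bound and a blow-up/limit analysis. Testing \eqref{1} with $u_n$ and using the weighted embedding $H^1_0(B)\hookrightarrow L^{p+1}(B,|x|^\alpha dx)$ (which holds with uniform constant for $p$ near $1$ since $p+1$ is far from any critical exponent), together with the fact that the first eigenvalue $\lambda_1$ of $-\Delta$ with weight $|x|^\alpha$ is positive, one shows $\|u_n\|_{H^1_0}$ and hence $\|u_n\|_{L^\infty}$ stay in a bounded range; more precisely the natural scaling is to set $M_n := \|u_n\|_\infty$ and look at $v_n(x) := u_n(x)/M_n$, which solves $-\Delta v_n = M_n^{p_n-1}|x|^\alpha v_n^{p_n}$. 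Since $M_n^{p_n-1}\to$ a finite positive limit $\mu$ (one must rule out $0$ and $\infty$ using the equation and the bounds), $v_n$ converges in $C^{1,\gamma}(\overline B)$ to a solution $v_0>0$ of the \emph{linear} problem $-\Delta v_0 = \mu |x|^\alpha v_0$ in $B$, $v_0=0$ on $\partial B$, with $\|v_0\|_\infty=1$.

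The second step is to identify $v_0$. Since $v_0>0$ in $B$, it must be the first eigenfunction of $-\Delta$ with weight $|x|^\alpha$ and $\mu$ the corresponding first eigenvalue $\lambda_1$; in particular $v_0$ is \emph{radial}, because the weight $|x|^\alpha$ is radial and the first eigenfunction is unique up to scalar (here the positivity of $v_0$, not the positivity of $u_n$, is what forces radial symmetry — this is exactly the point where the argument differs from a moving-plane argument and hence survives the radially increasing weight). The third step is to upgrade this to statements about $u_n$ itself for large $n$. Non-radiality of $u_n$ is excluded by an implicit-function / uniqueness argument: near the nondegenerate radial solution $u_{p_n}$ of Theorem \ref{tNi}, the rescaled equation has, by the nondegeneracy of $v_0$ as a solution of the limiting linear problem and the inverse function theorem applied to the map $(p,u)\mapsto -\Delta u - |x|^\alpha u^p$ on suitable Hölder spaces, exactly one positive solution in a $C^{1,\gamma}$-neighbourhood; since both $u_n$ (assumed to exist) and $u_{p_n}$ rescale to the same $v_0$, they must coincide for $n$ large, contradicting non-radiality. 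This yields $\delta=\delta(\alpha)$ with uniqueness for $p\in(1,1+\delta)$.

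The fourth step handles nondegeneracy and the Morse index. The eigenvalues of the linearization \eqref{L} at $u_{p_n}$, after the same rescaling, converge to the eigenvalues of the limiting linear operator $v\mapsto -\Delta v - \lambda_1 |x|^\alpha v$ — equivalently, in the notation of Remark \ref{passaggioa1dim}, the quantities $p_n\Lambda_{i,k}(p_n)M_n^{p_n-1}$ converge to the eigenvalues of $-\Delta$ with weight $|x|^\alpha$ on the $k$-th spherical mode, whose smallest is $\lambda_1$ (attained only at $k=0$) and all others are $>\lambda_1$. Hence for $n$ large $\Lambda_{1,0}(p_n)<1$ while $\Lambda_{i,k}(p_n)>1$ for all other $(i,k)$; by Theorem \ref{t1} (or directly) this gives Morse index exactly $1$ and nondegeneracy ($\Lambda_{1,1}(p)\ne 1$), since $\Lambda_{1,1}(p_n)>1$. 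I expect the main obstacle to be the careful blow-up analysis of the second step: one must rule out $M_n\to 0$ and $M_n\to\infty$ and show $M_n^{p_n-1}$ converges to a positive finite limit (not just along a subsequence), control the convergence up to the boundary in $C^{1,\gamma}$ uniformly (elliptic estimates with the possibly singular but integrable weight $|x|^\alpha$ near the origin require care — one splits into a region away from $0$ where standard Schauder applies and a small ball around $0$ where the weight is handled by its $L^q$-integrability for suitable $q$), and verify that the convergence is strong enough that the limiting problem is genuinely linear. Once the compactness is in hand, the uniqueness and Morse-index conclusions follow from the nondegeneracy of the limit by soft arguments.
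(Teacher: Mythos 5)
Your Steps 1, 2 and 4 match the paper's structure: rescale by $\|u_n\|_\infty$, identify the limit as the first weighted eigenfunction $\phi_1$, and track the eigenvalues of the linearization to get Morse index $1$. (The paper does this in Lemma \ref{l-p-rad} for the radial solution and, importantly, in Lemma \ref{l-p-gen} for an arbitrary solution, the latter requiring a four-way case analysis on the location of the maximum point $Q_n$ — your sketch is silent on the case $Q_n\to\partial B$ and the two sub-cases near the origin, but these are routine once one has the right rescalings.)

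The serious gap is in your Step 3, the uniqueness argument via the inverse function theorem. The limiting linearized operator is \emph{degenerate}: after the rescaling, $p\,u_p^{p-1}\|u_p\|_\infty^{p_n-1}\to\lambda_1$, so the linearization converges to $v\mapsto -\Delta v-\lambda_1|x|^\alpha v$, whose kernel is spanned by $\phi_1$ itself. This is not an accident; it reflects the scaling freedom $v\mapsto cv$, $\mu\mapsto c^{1-p}\mu$ of the rescaled equation $-\Delta v=\mu|x|^\alpha v^p$, which degenerates into a genuine one-parameter family as $p\to1$. Consequently the map $(p,u)\mapsto -\Delta u-|x|^\alpha u^p$ (or its rescaled version) does \emph{not} have an invertible $u$-derivative uniformly near the limit, and the inverse function theorem does not apply. "Both rescale to the same $v_0$" does not imply they coincide — that implication is exactly what uniqueness would provide.

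The paper avoids this obstruction by a different mechanism (Lin--Dancer). Given two putative solutions $u_n\ne v_n$, one first shows from $\int_B |x|^\alpha u_nv_n(u_n^{p_n-1}-v_n^{p_n-1})\,dx=0$ that $u_n-v_n$ must change sign; then the \emph{normalized difference} $\varphi_n=(u_n-v_n)/\|u_n-v_n\|_\infty$ satisfies a linear equation with potential squeezed between $p_nu_n^{p_n-1}$ and $p_nv_n^{p_n-1}$, both of which tend to $\lambda_1$, so $\varphi_n\to\phi_1$ uniformly. But $\phi_1$ has one sign while each $\varphi_n$ is sign-changing, and (via a Poincar\'e-type estimate) no nodal region of $\varphi_n$ can shrink away — contradiction. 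You should replace your Step 3 with this difference-quotient argument; the rest of your outline then goes through.
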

Before entering the details of the proof, we introduce the notation
\[
\l_1:=\inf_{\substack{v\in H^1_{0}(\B )\\ v\neq 0}} \frac{\int_{\B }|\na v|^2\, dx}{\int_{\B }|x|^{\a}v^2\, dx}
\]
for the first eigenvalue with weight $|x|^{\a}$ in the ball $\B $.
It is standard to see that $\l_1$ is attained, that the first eigenfunction is simple, has fixed sign and solves
\[
\left\{
\begin{array}{ll}
-\Delta \phi_1=\l_1|x|^{\a} \phi_1 & \text{ in }\B \\
\phi_1>0  & \text{ in }\B \\
\phi_1=0 & \text{ on }\de \B .
\end{array}\right.
\]
For future convenience, we consider also the same problem in a ball of arbitrary radius $R>0$, and set $\l_R$ and $\phi_R$, respectively, the first eigenvalue and the first eigenfunction with weight $|x|^{\a}$ in $B_R(0)$.
It is clear that $v(x):=\phi_R(Rx)$ satisfies
\[
\left\{
\begin{array}{ll}
-\Delta v=R^{2+\a}\l_R|x|^{\a} v & \text{ in }\B \\
v>0  & \text{ in }\B \\
v=0 & \text{ on }\de \B .
\end{array}\right.
\]
Hence $v(x)$ is a first eigenfunction with weight $|x|^{\a}$ in $\B $ and $\l_1=R^{2+\a}\l_R$. This implies in turn that
\[\l_R=\frac 1{R^{2+\a}}\l_1.\]

The asymptotic behavior of $\up$ as $p\to 1$ is described by next Lemma.
\begin{lemma}\label{l-p-rad}
Let $p_n$ be a sequence such that $p_n\to 1$ as $n\to +\infty$, and let $u_n:=u_{p_n}$ be the unique radial solution of \eqref{1} related to $p_n$. Then
$\nor u_n\nor_{\infty}^{p_n-1}\to \l_1$  and $u_n / \nor u_n\nor_{\infty} \to \phi_1$ uniformly in $\B $, as $n\to +\infty$.
\end{lemma}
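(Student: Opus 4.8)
The plan is to normalise the radial solutions and keep track of the quantity $\mu_n:=\|u_n\|_\infty^{p_n-1}$. I would set $M_n:=\|u_n\|_\infty=u_n(0)$ (the solution is radial and strictly decreasing by Lemma~\ref{der-seconda}) and $v_n:=u_n/M_n$, so that $0<v_n\le 1$ in $\B$, $v_n(0)=1$, $v_n=0$ on $\pa\B$, and
\[
-\Delta v_n=\mu_n\,|x|^{\a}\,v_n^{p_n}\qquad\text{in }\B .
\]
Testing this with $v_n$ and using $v_n^{p_n+1}\le v_n^{2}$ (since $0\le v_n\le 1$ and $p_n\ge1$), together with the variational definition of $\l_1$, gives the easy a priori bound $\mu_n\ge\l_1$. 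The whole difficulty then reduces to proving that $(\mu_n)$ is also bounded from above; once that is known, the rest is soft.

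Suppose $(\mu_n)$ is bounded, say $\mu_n\to\mu$ along a subsequence. Then the right hand side $\mu_n|x|^{\a}v_n^{p_n}$ is bounded in $L^\infty(\B)$, so $W^{2,q}$ estimates and Sobolev embedding make $(v_n)$ relatively compact in $C^{1}(\overline\B)$. I would pass to the limit --- using the elementary fact $\sup_{t\in[0,1]}|t^{p_n}-t|\to 0$ to handle the nonlinearity --- to obtain that any cluster point $v$ solves $-\Delta v=\mu|x|^{\a}v$ in $\B$, $v=0$ on $\pa\B$, with $0\le v\le 1$ and $v(0)=\lim_n v_n(0)=1$. A nonnegative, nontrivial solution of this linear weighted eigenvalue problem is, by the strong maximum principle, a positive first eigenfunction, so necessarily $\mu=\l_1$ and $v=\phi_1$ (once $\phi_1$ is normalised so that $\|\phi_1\|_\infty=1$). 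As every subsequential limit is the same, $\mu_n\to\l_1$ and $v_n\to\phi_1$ uniformly on $\overline\B$, which is exactly the assertion.

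The hard part is to exclude $\mu_n\to+\infty$, i.e.\ concentration of $v_n$ at the origin. Here I would introduce the scaling that keeps the weight invariant: set $\d_n:=\mu_n^{-1/(2+\a)}\to 0$ and $\tu_n(y):=v_n(\d_n y)$ for $|y|<1/\d_n$, so that $\tu_n$ is radial and nonincreasing, $0<\tu_n\le1$, $\tu_n(0)=1$, $\tu_n=0$ on $\pa B_{1/\d_n}(0)$, and
\[
-\Delta\tu_n=|y|^{\a}\,\tu_n^{p_n}\qquad\text{in }B_{1/\d_n}(0).
\]
Since the right hand side is locally bounded, interior elliptic estimates and a diagonal argument give $\tu_n\to\tu$ in $C^1_{\mathrm{loc}}(\R^N)$, where $\tu$ is radial, nonincreasing, $0\le\tu\le1$, $\tu(0)=1$, and solves $-\Delta\tu=|y|^{\a}\tu$ in all of $\R^N$. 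As a nonnegative, nontrivial superharmonic function, $\tu$ must be strictly positive everywhere; but then its radial profile would be a positive solution of $-(r^{N-1}\tu')'=r^{N-1+\a}\tu$ on $[0,\infty)$, which is impossible because this Sturm--Liouville equation is oscillatory at infinity. Indeed the Liouville substitution $\tu(r)=r^{(1-N)/2}\phi(r)$ turns it into $\phi''+\bigl(r^{\a}+\tfrac{(N-1)(3-N)}{4}r^{-2}\bigr)\phi=0$, whose potential diverges to $+\infty$, so comparing with $\phi''+m^2\phi=0$ on far-out intervals forces $\phi$, hence $\tu$, to change sign infinitely often. This contradiction shows that $(\mu_n)$ is bounded, and we are back to the previous paragraph.

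I expect this non-concentration step to be the main obstacle. It is precisely where the Henon weight $|x|^{\a}$ enters in an essential way --- through the exponent $1/(2+\a)$ in the scaling, under which $|x|^{\a}$ reproduces itself, and through the limiting radial operator $-(r^{N-1}\cdot)'=r^{N-1+\a}\cdot$ --- and it is the ingredient that must be added to the rescaling scheme of \cite{Lin,D1}. Everything else (the a priori bounds, the compactness, the identification of the limit) is routine once this is in place.
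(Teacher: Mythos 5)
Your proposal is correct and shares the central idea with the paper---the scaling $\delta_n=\mu_n^{-1/(2+\alpha)}$ (resp.\ $r_n=\|u_n\|_\infty^{(p_n-1)/(2+\alpha)}$ in the paper) that leaves the weight $|x|^\alpha$ invariant, producing in the limit a positive bounded solution of $-\Delta \tu=|y|^\alpha\tu$ in all of $\R^N$---but you rule out that entire solution by a genuinely different argument. The paper's proof tests $\tu$ against $\phi_R$, the first eigenfunction with weight $|x|^\alpha$ in $B_R(0)$: since $\lambda_R=\lambda_1/R^{2+\alpha}<1$ for $R$ large, Green's identity gives $0>\int_{\partial B_R}\tu\,\partial_\nu\phi_R=(1-\lambda_R)\int_{B_R}|x|^\alpha\tu\phi_R>0$, a contradiction. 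You instead pass to the radial ODE and apply the Liouville substitution $\tu(r)=r^{(1-N)/2}\phi(r)$, obtaining $\phi''+\bigl(r^\alpha+\tfrac{(N-1)(3-N)}{4}r^{-2}\bigr)\phi=0$ (the coefficient is computed correctly), whose potential diverges so Sturm comparison forces $\phi$, hence $\tu$, to oscillate. Both arguments are correct; the paper's has the advantage of not using radial symmetry of $\tu$, which is precisely what is needed when the same Liouville step is reused for arbitrary solutions in Lemma~\ref{l-p-gen}, whereas your oscillation argument is confined to the radial setting of this lemma. Your lower bound $\mu_n\ge\lambda_1$ via the Rayleigh quotient (using $v_n^{p_n+1}\le v_n^2$) is also valid and differs cosmetically from the paper's pointwise estimate $\bar u_n\le \|u_n\|_\infty^{p_n-1}(-\Delta)^{-1}(1)$; neither is strictly necessary once the upper bound and the identification of the limit are in hand, but both make the structure transparent.
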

\begin{proof}
The function $\bar u_n= u_n / \nor u_n\nor_{\infty} $ satisfies
\[
\left\{
\begin{array}{ll}
-\Delta \bar u_n=\nor u_n\nor_{\infty}^{p_n-1}|x|^{\a} \bar u_n^{p_n} & \text{ in }\B \\
\bar u_n>0  & \text{ in }\B \\
\bar u_n=0 & \text{ on }\de \B 
\end{array}\right.
\]
and $\bar u_n(0)=1$ since $u_n$ is a radial function and achieves its maximum in the origin. This implies that  $\nor u_n\nor_{\infty}^{p_n-1} $ can not vanish, because
\[\bar u_n=(-\Delta )^{-1}\big(\nor u_n\nor_{\infty}^{p_n-1}|x|^{\a} \bar u_n^{p_n}\big)\leq (-\Delta )^{-1}\big(\nor u_n\nor_{\infty}^{p_n-1} \big)\leq \nor u_n\nor_{\infty}^{p_n-1}(-\Delta )^{-1}(1).\] Besides, $\nor u_n\nor_{\infty}^{p_n-1}$ can not blow up either.
Suppose by contradiction that, up to a subsequence, $\nor u_n\nor_{\infty}^{p_n-1}\to +\infty$ as $n\to +\infty$, and take
 \[ r_n=\nor u_n\nor_{\infty}^{\frac{p_n-1}{2+\a}} , \qquad \tu_n(x):=\frac 1{\nor u_n\nor_{\infty}}u_n\left( \frac x{r_n}\right) \, \text{ for } \, x\in B_{r_n}(0).\] Then $\tu _n$ solves
\begin{equation}\nonumber
\left\{
\begin{array}{ll}
-\Delta \tu_n=|x|^{\a} \tu_n^{p_n} , & \text{ in }B_{r_n}(0), \\
\tu_n>0  , & \text{ in }B_{r_n}(0) ,
\end{array}\right.
\end{equation}
and $B_{r_n}(0)$ is an expanding ball.
Moreover in each compact set $K\subset \R^N$, $|x|^{\a }\tu_n^{p_n}$ is uniformly bounded so that $\tu_n\to \tu$ uniformly on compact sets of $\R^N$, and $\tu$ is a solution of
\[
\left\{
\begin{array}{ll}
-\Delta \tu=|x|^{\a} \tu , & \text{ in }\R^N ,\\
\tu>0  , & \text{ in } \R^N .
\end{array}\right.
\]
Let now $\l_R$ and $\phi_R$ be respectively the first eigenvalue and the first eigenfunction with weight $|x|^{\a}$ in $B_R(0)$. If $R$ is large, we have $\l_R<1$, then
\[
0>\int_{\de B_R(0)}\tu \frac{\de \phi_R}{\de \nu} \, d\sigma=\int_{B_R(0)}\tu \Delta \phi_R-\phi_R \Delta \tu \, dx=(1-\l_R)\int_{B_R(0)}|x|^{\a}\tu \phi_R \, dx>0
\]
getting a contradiction.\\
Therefore, up to a subsequence, $\nor u_n\nor_{\infty}^{p_n-1}$ converges to some positive number $\l$, and $\bar u_n$ converges  uniformly in $\B $ to a function $\bar u$ which solves
\[
\left\{
\begin{array}{ll}
-\Delta \bar u=\l|x|^{\a} \bar u , & \text{ in }\B , \\
\bar u\geq 0 , & \text{ in }\B , \\
\bar u=0 , & \text{ on }\de \B ,
\end{array}\right.
\]
and $\bar u(0)=1$.
Then $\l$ and $\bar u$ must be respectively the first eigenvalue and the first eigenfunction with weight $|x|^{\a}$ in $\B $.
\end{proof}

Any other solution,  possibly non radial, follows the same behavior described in Lemma \ref{l-p-rad}.
\begin{lemma}\label{l-p-gen}
Let $p_n$ be a sequence such that $p_n\to 1$ as $n\to +\infty$ and let $v_n:=v_{p_n}$ be a solution of \eqref{1} related to the exponent $p_n$. Then
$\nor v_n\nor_{\infty}^{p_n-1}\to \l_1$ and $v_n / \nor v_n\nor_{\infty} \to \phi_1$ uniformly in $\B $,
as $n\to +\infty$.
\end{lemma}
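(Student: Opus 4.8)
The plan is to follow the scheme of Lemma \ref{l-p-rad}, the only genuinely new point being that a general (possibly nonradial) solution $v_n$ need not attain its maximum at the origin, so the blow-up analysis has to be performed around a moving maximum point. Write $M_n:=\nor v_n\nor_\infty$ and $\bar v_n:=v_n/M_n$, so that $0\le\bar v_n\le 1$, $\nor\bar v_n\nor_\infty=1$, and $-\Delta\bar v_n=M_n^{p_n-1}|x|^\a\bar v_n^{p_n}$ in $\B$, $\bar v_n=0$ on $\de\B$. The first step is to check that $M_n^{p_n-1}$ stays bounded away from $0$: since $|x|^\a\bar v_n^{p_n}\le 1$, the comparison principle gives $\bar v_n\le M_n^{p_n-1}(-\Delta)^{-1}(1)$, whence $1=\nor\bar v_n\nor_\infty\le M_n^{p_n-1}\nor(-\Delta)^{-1}(1)\nor_\infty$. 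This is verbatim the argument used for $u_n$ in Lemma \ref{l-p-rad} and uses no symmetry.

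Next I would rule out that $M_n^{p_n-1}\to+\infty$ along a subsequence, by a blow-up around a point $x_n$ where $v_n(x_n)=M_n$. If $|x_n|$ stays bounded away from $0$, I rescale by $\s_n:=(M_n^{p_n-1}|x_n|^\a)^{-1/2}\to 0$ through $\tilde v_n(y):=\bar v_n(x_n+\s_n y)$; then $-\Delta\tilde v_n=(|x_n+\s_n y|/|x_n|)^\a\,\tilde v_n^{p_n}$, the coefficient tends to $1$ locally uniformly, and $\tilde v_n(0)=1$, $0\le\tilde v_n\le1$, so by elliptic estimates $\tilde v_n\to w$ in $C^1_{loc}$ with $w\ge0$, $w(0)=1$, $w\le1$ solving $-\Delta w=w$ in $\R^N$, or in a half-space with homogeneous Dirichlet datum if $x_n$ approaches $\de\B$ at rate $\s_n$. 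If instead $|x_n|\to 0$, I rescale about the origin by $\s_n:=M_n^{-(p_n-1)/(2+\a)}$, obtaining $-\Delta\tilde v_n=|y|^\a\tilde v_n^{p_n}$ on the expanding ball $B_{1/\s_n}(0)$ with maximum at $\xi_n:=x_n/\s_n$; when $\xi_n$ is bounded the limit solves $-\Delta w=|y|^\a w$ in $\R^N$ with $\nor w\nor_\infty=1$, while when $|\xi_n|\to\infty$ a further rescaling by $|\xi_n|^{-\a/2}$ about $\xi_n$ reduces once more to $-\Delta w=w$ in $\R^N$. In every case $w$ is a positive, bounded solution of $-\Delta w=c\,w$ with $c>0$ constant, or of $-\Delta w=|y|^\a w$, on $\R^N$ or on a half-space, which is impossible by exactly the computation of Lemma \ref{l-p-rad}: one picks a ball $B_R$ inside the domain with $R$ so large that its first (weighted, in the second alternative) Dirichlet eigenvalue is smaller than $c$ (resp.~than $1$, using $\l_R=\l_1/R^{2+\a}$), and Green's identity against the corresponding positive first eigenfunction $\varphi_R$ gives $0>\int_{\de B_R}w\,\de_\nu\varphi_R>0$. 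Hence $M_n^{p_n-1}$ is also bounded from above.

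To conclude, given any subsequence I extract a further one along which $M_n^{p_n-1}\to\l\in(0,+\infty)$. Since $|x|^\a\bar v_n^{p_n}$ is then uniformly bounded and $p_n\to 1$, elliptic regularity yields $\bar v_n\to\bar v$ in $C^{1,\g}(\overline\B)$, with $\bar v\ge0$, $\nor\bar v\nor_\infty=1$ and $-\Delta\bar v=\l|x|^\a\bar v$ in $\B$, $\bar v=0$ on $\de\B$; in particular $\bar v\not\equiv0$. The strong maximum principle forces $\bar v>0$ in $\B$, so $\l$ is an eigenvalue with weight $|x|^\a$ admitting a positive eigenfunction, and by simplicity of $\l_1$ this gives $\l=\l_1$ and $\bar v=\phi_1$ (once $\phi_1$ is normalized by $\nor\phi_1\nor_\infty=1$). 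As the limit does not depend on the subsequence, the whole sequence converges: $\nor v_n\nor_\infty^{p_n-1}\to\l_1$ and $v_n/\nor v_n\nor_\infty\to\phi_1$ uniformly in $\B$.

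The step I expect to be the main obstacle is the blow-up that excludes $M_n^{p_n-1}\to+\infty$, and within it the control of the location of the maximum point $x_n$: when $x_n$ drifts toward the origin the weight $|x|^\a$ degenerates there, so the natural scaling must be centered at the origin rather than at $x_n$, and an auxiliary rescaling may be needed to keep track of the maximum of the limiting profile. Once the correct scaling is identified, the Liouville-type contradiction is precisely the one already carried out for the radial solution in Lemma \ref{l-p-rad}.
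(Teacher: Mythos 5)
Your proof is correct and follows essentially the same blow-up strategy as the paper, with the same case distinction according to where the maximum points $x_n$ accumulate ($Q_0$ interior, on $\partial B$, or at the origin with $|x_n|$ going to zero slowly or quickly relative to $M_n^{-(p_n-1)/(2+\alpha)}$), each case treated by the corresponding rescaling and a Liouville argument. The only minor deviation is that in the half-space alternative you reuse the eigenfunction-pairing argument from Lemma \ref{l-p-rad} instead of invoking Dancer's monotonicity result for $-\Delta w=w$ in a half-space as the paper does; both are valid.
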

\begin{proof}
If we prove that $\nor  v_n \nor_{\infty}^{p_n-1}$ is bounded, then the thesis follows as in Lemma \ref{l-p-rad}.
Suppose by contradiction that $\nor v_n \nor_{\infty}^{p_n-1}\to +\infty$, up to a subsequence. We denote by $Q_n\in \B $ the points such that $v_n(Q_n)=\nor v_n\nor_{\infty}$. Up to a subsequence, $Q_n$ converges to some point $Q_0\in \overline{B}_1(0)$. We can distinguish some different cases.

\noindent{\em Case 1:}  $Q_0\in \B \setminus{0}$. We set $\mu_n^2:=\nor v_n\nor_{\infty}^{p_n-1}|Q_0|^{\a}$, so that $\mu_n\to +\infty$ as $n\to +\infty$, and introduce the functions  $\tv_n(x) =\nor v_n\nor^{-1} v_n\big( \frac x{\mu_n}+Q_n\big)$,  which are defined in $\widetilde{B}_n=\{x\in \R^N, \, : \, \frac x{\mu_n}+Q_n\in \B \}$. We have
$$\left\{ \begin{array}{ll}
-\Delta \tv _n=\frac{\big|\frac x{\mu_n}+Q_n\big|^{\a}}{|Q_0|^{\a}}\tv_n^{p_n} & \text{ in } \widetilde{B}_n\\
\tv_n(0)=1\\
\tv_n=0  &\text{ on } \de \widetilde{B}_n.
\end{array}\right.$$
Notice that the sets $\widetilde{B}_n$ cover all $\R^N$ as $n\to+\infty$, and the right-hand-side of the equation is locally uniformly bounded.
Thus the sequence $\tv_n$ converge locally uniformly (up to a subsequence) to an entire nonnegative, non-null solution of $ -\Delta \tv =\tv$, and this is not possible.

\noindent{\em Case 2:} $Q_0\in \de \B $. Let $\tv_n$ and $\widetilde{B}_n$ be as in the previous case. Then, following the proof of \cite[Theorem 1.1]{GS}, it is standard to see that either $\widetilde{B}_n\to \R^N$ and $\tv_n$ converges to the same function  $\tv$ introduced in the previous case, or $  \widetilde{B}_n$ tends to the half-space $\Sigma:=\{x\in \R^N,\,\, \text{ such that }x_N>-1\}$, and $\tv_n$ converges uniformly on compact sets of $\Sigma$ to a function $\tilde w$ that solves
\[\left\{\begin{array}{ll}
-\Delta \tilde w = \tilde w \quad & \text{ in }\Sigma,\\ \tilde w\geq 0, & \text{ in }\Sigma, \\ \tilde w=0 \quad &\text{ on } \de \Sigma,\end{array}\right.\]
with $\nor \tilde w \nor_{\infty}=\tilde w(0)=1$. The first occurrence has been ruled out  in the previous case. The second one is not possible either, because a positive solution of the previous equation should be strictly increasing w.r.t.~the $x_N$ variable, contradicting the fact that the maximum is achieved in the origin, see \cite[Theorem 2]{D2}.

\noindent{\em Case 3:} $Q_0=0$ and $\nor v_n\nor_{\infty}^{\frac{p_n-1}{2+\a}}|Q_n|$ is bounded.
In this case we let $\mu_n^{2+\a}=\nor v_n\nor_{\infty}^{p_n-1}$. We may assume without loss of generality that $\mu_n Q_n$ converges to some point $\tilde Q$ in $\R^N$. We define  $\tv_n(x)=\nor v_n\nor_{\infty}^{-1}v_n\big( (x-\tilde Q)/\mu_n+Q_n\big)$ for all $x\in B_{\mu_n}(\tilde Q-\mu_nQ_n)$. We have
$$\left\{ \begin{array}{ll}
-\Delta \tv _n=\big|x-\tilde Q+\mu_nQ_n\big|^{\a}\tv_n^{p_n} & \text{ in } B_{\mu_n}(\tilde Q-\mu_nQ_n)\\
\tv_n(\tilde Q)=1\\
\tv_n=0  &\text{ on } \de B_{\mu_n}(\tilde Q-\mu_nQ_n) .
\end{array}\right.$$
It is standard to see that, up to a subsequence, $\tv_n\to \tv$ uniformly on compact sets of $\R^N$, where $\tv$ is an entire,  positive and bounded solution to
$-\Delta \tv =|x|^{\a}\tv$. As explained in the proof of Lemma \ref{l-p-rad}, this is not possible.

\noindent{\em{Case 4:}} $Q_0=0$  and $\nor v_n\nor_{\infty}^{\frac{p_n-1}{2+\a}}|Q_n|$ is unbounded.
We let $\mu_n^{2}=\nor v_n\nor_{\infty}^{p_n-1}|Q_n|^{\a}$. By hypothesis $\mu_n^2>\frac C{|Q_n|^{2+\a}}|Q_n|^{\a}=\frac C{|Q_n|^{2}}\to +\infty$ as $k\to +\infty$. The rescaled function $\tv_n=\nor v_n\nor_{\infty}^{-1}v_n\big( \frac x{\mu_n}+Q_n\big)$ satisfies in $\widetilde B_n$
$$\left\{ \begin{array}{ll}
-\Delta \tv _n=\Big(\frac{|x+\mu_nQ_n|}{\mu_n |Q_n|}\Big)^{\a}\tv_n^{p_n} & \text{ in } \widetilde{B}_n\\
\tv_n(0)=1\\
\tv_n=0  &\text{ on } \de \widetilde{B}_n.
\end{array}\right.$$
Observe that, up to a subsequence $\frac {Q_n}{|Q_n|}\to \tilde Q\in \de \B $ and $\mu_n |Q_n|= \nor v_n\nor_{\infty}^{\frac{p_n-1}2}|Q_n|^{\frac{2+ \a}2}\to +\infty$ as $k\to +\infty$ by hypothesis. Again $\widetilde B_n$ is an expanding domain and, up to a subsequence $\tv_n\to \tv$ uniformly on compact sets of $\R^N$, where $\tv$ solves
$$-\Delta \tv =\tv \quad \text{ in }\R^N,\quad \tv\geq 0, \quad \tv(0)=1$$
and this is not possible as before. This case concludes the proof of the Lemma.
\end{proof}

We are now ready to prove the main result of this subsection.

\begin{proof}[Proof of Theorem \ref{uniq}]
We argue by contradiction. Suppose there exist a sequence $p_n\to 1$ and functions $u_n$ and $v_n$ that are different solutions of \eqref{1} related to the same exponent $p_n$. First we show that the difference $u_n-v_n$ must change sign in $\B $. Using equation \eqref{1} we have
\[ 0=\int_{\B }u_n\Delta v_n-v_n\Delta u_n\, dx=\int_{\B }|x|^{\a}u_nv_n\big(u_n^{p_n-1}-v_n^{p_n-1}\big)\, dx.\]
If $u_n\geq v_n$ then $u_n\equiv v_n$ and we are done.  Let $\varphi_n:=(u_n-v_n)/\nor u_n-v_n\nor_{\infty}$. The function $\varphi_n$ satisfies
\begin{equation}\label{***}
\left\{ \begin{array}{ll}
-\Delta  \varphi_n=|x|^{\a} w_n\, \varphi_n , & \text{ in } \B ,\\
\nor \varphi_n\nor_{\infty}=1, &\\
\varphi_n=0  , &\text{ on } \de \B ,
\end{array}\right.
\end{equation}
where $\displaystyle w_n(x)=p_n\int_0^1\left(t u_n(x)+(1-t)v_n(x)\right)^{p_n-1} dt $ is contained among $p_nu_n^{p_n-1}(x)$ and  $p_nv_n^{p_n-1}(x)$. An immediate consequence of  Lemma \ref{l-p-gen} is that both $p_nu_n^{p_n-1}$ and $p_nv_n^{p_n-1}$ go to the constant function $\l_1$ locally uniformly in $\B $. Hence $ w_n\to \l_1$ also, and therefore $\varphi_n$ converges uniformly in $\B $ to a function $\varphi$, which has $\|\varphi\|_{\infty}=1$ and solves
\begin{equation}\nonumber
\left\{\begin{array}{ll}
-\Delta \varphi=|x|^{\a}\l_1 \varphi , & \quad \text{ in }\B , \\
\varphi=0 , &\quad \text{ on }\de \B .\end{array}\right.
\end{equation}
So $\varphi$ is the first eigenfunction with weight $|x|^{\a}$ and has one sign in $\B $. But this clashes with the uniform convergence of the sign-changing functions $\varphi_n$.
Indeed no nodal region of $\varphi_n$ can disappear. So assume, by contradiction, that there exists a nodal region $A_n$ of $\varphi_n$ such that $meas(A_n)\to 0$. The function $\varphi_n$ satisfies \eqref{***} and $|x|^{\a} w_n(x)\leq C$ in $\B $. Multiplying \eqref{***} by $\varphi_n$ and integrating over $A_n$, we get, by the Poincar\'e Inequality
$$\int_{A_n}|\na \varphi_n|^2\leq C\int_{A_n} \varphi_n^2\leq \frac{C \, \mathit{meas}(A_n)}{\omega_N}\int_{A_n}|\na \varphi_n|^2$$
where $\omega_N$ is the measure of the $N$-dimensional sphere,
and this implies that $\mathit{meas} (A_n)\geq \frac{\omega_N}C$ so that $A_n$ cannot disappear.\\
Concerning the  eigenvalues problem \eqref{A}, we will show that $\L_2(p_n)\to  {\l_i}/{\l_1}$ as $n\to +\infty$ for some $i\geq 2$. Then $\L_2(p_n)>1$ and this yields that the Morse index of $u_{p_n}$ is 1, provided that $n$ is large.\\
Let $v_{2,n}$ be a second eigenfunction of \eqref{A} related to the exponent $p_n$, with eigenvalue $
\L_2(p_n)$, normalized in the $L^{\infty}$-norm. Then
\begin{equation}\nonumber
\left\{ \begin{array}{ll}
-\Delta  v_{2,n}=\L_2(p_n)p_n|x|^{\a} u_n^{p_n-1}\, v_{2,n}  & \text{ in } \B ,\\
\nor v_{2,n}\nor_{\infty}=1, &\\
v_{2,n}=0  , &\text{ on } \de \B .
\end{array}\right.
\end{equation}
As before $p_nu_n^{p_n-1}\to \l_1$ as $n\to +\infty$, while $\L_2(p_n)\leq C$. Then, up to a subsequence, $\L_2(p_n)\to \L_2$ and  $v_{2,n}$ converges uniformly in $\B $ to a function $v_2$ which has $\nor v_2\nor_{\infty}=1$ and solves
$$\left\{ \begin{array}{ll}
-\Delta  v_{2}=\L_2\l_1 |x|^{\a} \, v_{2} , & \text{ in } \B ,\\
v_{2}=0  , &\text{ on } \de \B .
\end{array}\right.$$
So $v_2$ is an eigenfunction with weight $|x|^{\a}$ related to an eigenvalue $\l_i=\L_2\l_1$. Moreover, as before we have  that $v_2$ changes sign in $\B $ so that $\l_i\geq \l_2$. This implies that $\L_2(p_n)\to  {\l_i}/{\l_1}$ and concludes the proof.

\remove{  Lemma \ref{l-p-gen} we have that $p_nu_n^{p_n-1}=\nor u_n\nor_{\infty}^{p_n-1}\bar u_n^{p_n-1}\to \l_1$ uniformly on compact sets of $\B $ and the same holds for $p_nv_n^{p_n-1}$. This implies that $v_n(x)\to \l_1$ uniformly on compact sets of $\B $. Up to a subsequence $\varphi_n\to \varphi$ uniformly on $\B $ where $\varphi$ is a solution of
$$-\Delta \varphi=|x|^{\a}\l_1 \varphi \quad \text{ in }\B ,\quad \varphi=0 \quad \text{ on }\de \B $$
and this mean that $\varphi=\phi_1$, the first eigenfunction with weight $|x|^{\a}$, and so it has one sign in $\B $. The uniform convergence of $\varphi_n$ to $\varphi$ in $\B $ contradicts the fact that $\varphi_n$ changes sign in $\B $ and proves the claim.}
\end{proof}
\noindent Estimating the eigenvalues $\L_i(p_n)$ from above, it can be proved that $\L_i(p_n)\to \frac{\l_i}{\l_1}$ for any $i=1,2,\dots.$ whit the same multiplicity. But this requires some computation and goes beyond this discussion.
\begin{remark}
The proof of Theorem \ref{uniq} and of Lemma \ref{l-p-gen} can be generalized to the Henon problem in any  bounded smooth domain  $\Omega$ in $\R^N$.
Actually, the problem
\[
\left\{
\begin{array}{ll}
-\Delta u=|x|^{\a}u^p & \hbox{ in }\Omega,\\
u>0& \hbox{ in }\Omega,\\
u=0 & \hbox{ on }\pa \Omega,
\end{array}\right.
\]
has a unique nondegenerate solution of Morse index one, for every $p$ in a right neighborhood of $1$.

\end{remark}

\subsection{Asymptotic behavior as $p$ goes to $p_{\a}$.}

When $p$ approaches the critical exponent $p_{\a}$, solutions blow up in the sup-norm.
\begin{lemma}\label{p-to-palfa}
Let $p_n$ be a sequence such that $p_n\to p_\a$ as $n\to +\infty$. Let $v_n$ be any solution of \eqref{1} related to the exponent $p_n$. Then $\nor v_n\nor_{\infty}\to +\infty$ as $n\to +\infty$.
\end{lemma}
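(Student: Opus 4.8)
The plan is to argue by contradiction, following the classical blow-up scheme for critical problems. Suppose there is a sequence $p_n\to p_\a$ and solutions $v_n$ of \eqref{1} with $\nor v_n\nor_\infty\le C$ uniformly in $n$. First I would use the fact that $p_n$ stays in a compact subset of $(1,\infty)$ together with the uniform $L^\infty$ bound to get, via standard elliptic estimates applied to $-\Delta v_n=|x|^\a v_n^{p_n}$ (whose right-hand side is bounded in $L^\infty(\B)$ since $|x|^\a\le 1$), that $v_n$ is bounded in $C^{1,\g}(\overline\B)$ for some $\g\in(0,1)$, and hence precompact in $C^1(\overline\B)$. Passing to a subsequence, $v_n\to v$ in $C^1(\overline\B)$, where $v\ge 0$ solves $-\Delta v=|x|^\a v^{p_\a}$ in $\B$ with $v=0$ on $\de\B$.

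The next step is to show $v\not\equiv 0$, so that $v$ is an actual positive solution of \eqref{1} at the critical exponent $p=p_\a$. This is the point requiring the most care. The natural device is to test the equation against a suitable positive function to produce a lower bound on $\nor v_n\nor_\infty$ that survives the limit. For instance, pairing $-\Delta v_n=|x|^\a v_n^{p_n}$ with the first eigenfunction $\phi_1$ of $-\Delta$ with weight $|x|^\a$ in $\B$ gives
\[
\l_1\int_\B |x|^\a v_n\phi_1\,dx=\int_\B |x|^\a v_n^{p_n}\phi_1\,dx\le \nor v_n\nor_\infty^{p_n-1}\int_\B|x|^\a v_n\phi_1\,dx,
\]
whence $\nor v_n\nor_\infty^{p_n-1}\ge \l_1$, i.e.\ $\nor v_n\nor_\infty\ge \l_1^{1/(p_n-1)}\to\l_1^{1/(p_\a-1)}>0$. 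Thus $\nor v\nor_\infty\ge \l_1^{1/(p_\a-1)}>0$, so $v$ is nontrivial; by the strong maximum principle $v>0$ in $\B$, and by Hopf's lemma it is a genuine solution of \eqref{1} with $p=p_\a$.

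Finally I would invoke the nonexistence result quoted in the Introduction: the ad-hoc Pohozaev identity for the Henon equation rules out any positive solution of \eqref{1} when $p\ge p_\a$, in particular for $p=p_\a$ on the ball (which is star-shaped). This contradicts the existence of the limit solution $v$, so the assumption that $\nor v_n\nor_\infty$ stays bounded is untenable, and therefore $\nor v_n\nor_\infty\to+\infty$. The main obstacle is really just the nonvanishing of the limit; once the eigenfunction test is in place the rest is routine. One should also note that no extra work is needed to handle the possibility $p_n>p_\a$ versus $p_n<p_\a$, since the argument only uses $p_n\to p_\a$ and, for the final step, the nonexistence at the limiting exponent $p_\a$ itself.
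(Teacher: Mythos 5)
Your argument is correct and reaches the same contradiction via the Pohozaev identity, but you organize the blow-down differently from the paper. The paper passes immediately to the normalized sequence $\bar v_n = v_n/\nor v_n\nor_\infty$, so $\nor \bar v_n\nor_\infty = 1$ is automatic and the limit $\bar v$ cannot vanish by construction; the price is a case split on $L=\lim\nor v_n\nor_\infty^{p_n-1}$, with $L>0$ handled by Pohozaev and $L=0$ ruled out because a harmonic function vanishing on $\pa\B$ is identically zero, contradicting $\nor\bar v\nor_\infty=1$. You instead work with $v_n$ directly and establish nonvanishing of the limit through the lower bound $\nor v_n\nor_\infty^{p_n-1}\ge\l_1$, obtained by pairing the equation with the weighted eigenfunction $\phi_1$. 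This eigenfunction test is exactly the device the paper already uses in the proof of Lemma~\ref{l-p-rad} (there, to show $\nor u_n\nor_\infty^{p_n-1}$ cannot vanish as $p_n\to 1$), so you are reusing an idea from elsewhere in the paper to avoid the case split altogether. Both proofs hinge on the same final nonexistence result; your version is marginally cleaner because the lower bound $\nor v_n\nor_\infty\ge\l_1^{1/(p_n-1)}$ forces the limit to be a genuine positive critical-exponent solution in one stroke, rather than separating the $L=0$ degenerate alternative.
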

\begin{proof}
By contradiction, let us suppose that $\nor v_n\nor_{\infty}$ stays bounded, possibly up to a subsequence.  Its normalized function $\bar v_n:= v_n/\nor v_n\nor_{\infty}$ satisfies
\begin{equation}\nonumber
\left\{\begin{array}{ll}
-\Delta \bar v_n=\nor v_n\nor_{\infty}^{p_n-1}|x|^{\a}\bar v_n^{p_n} & \text{ in }\B \\
\bar v_n>0 & \text{ in }\B \\
\bar v_n=0 & \text{ on }\de \B ,
\end{array}\right.
\end{equation}
and the quantity $\nor v_n\nor_{\infty}^{p_n-1}|x|^{\a}\bar v_n^{p_n}$ is uniformly bounded in $\B $. Then $\bar v_n$ converges   uniformly in $\B $ to a function $\bar v$ which solves
\begin{equation}\nonumber
\left\{\begin{array}{ll}
-\Delta \bar v=L|x|^{\a}\bar v^{p_{\a}} & \text{ in }\B \\
\bar v>0 & \text{ in }\B \\
\bar v=0 & \text{ on }\de \B 
\end{array}\right.
\end{equation}
where $L\geq 0$ is, up to a subsequence, the limit of $\nor v_n\nor_{\infty}^{p_n-1}$ as $n\to +\infty$. Moreover $\nor \bar v\nor_{\infty}=1$. If $L>0$ then we get a contradiction with the Pohozaev identity, if, else, $L=0$ then the function $\bar v$ is harmonic in $\B $ and hence it has to be constant. The boundary conditions then implies $\bar v\equiv 0$ contradicting that $\nor \bar v\nor_{\infty}=1$.
\end{proof}

A rescaling of the $x$ variable is needed to put in evidence the character of $\up$ as $p\to p_{\a}$. In  that way, the blowup of the supnorm is changed into a blowup of the domain.

\begin{proposition}\label{asym-p-alfa}
Let $p_n$ be a sequence such that $p_n\to p_\a$ as $n\to +\infty$ and let $u_n:=u_{p_n}$ be the unique radial solution of \eqref{1} related to $p_n$.
We next set  $\mu_n:= \nor u_n\nor_{\infty}^{\frac{p_n-1}{2+\a}}$, and
\[ \widetilde{u}_n(x):=\frac 1{\nor u_n \nor_{\infty}}u_n\left( \frac x{\mu_n}\right) \quad \text{ as } x\in B_{\mu_n}(0).\]
Then, as $n\to+\infty$, the function $\widetilde u_n$ converges in $C^{\infty}_{\mathrm{loc}}(\R^N)$ to the function
\begin{equation}\label{U}
 U(x)=\frac 1{\left( 1+C_{\alpha}|x|^{2+\a}\right)^{\frac{N-2}{2+\a}}},  \qquad  C_{\alpha} = \frac{1}{(N-2)(N+\alpha)}, \end{equation}
which is the unique radial bounded solution of
\begin{equation}\label{eq-U}
\left\{\begin{array}{ll}
-\Delta U=|x|^{\a}U^{p_{\a}}\qquad &\text{ in }\R^N\\
U\geq 0 &\text{ in }\R^N\\
U(0)=1.
\end{array}\right.
\end{equation}
Besides for every $n$
\begin{equation}\label{stimautilde}
\widetilde u_n(x)\le U(x) \qquad \text{ as } x\in B_{\mu_n}(0).
\end{equation}
\end{proposition}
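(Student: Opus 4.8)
The plan is to combine a classical blow-up analysis with the explicit knowledge of the limiting equation \eqref{eq-U}. First I would set up the rescaled functions $\widetilde u_n$ and record that, by construction, $\widetilde u_n(0)=\nor \widetilde u_n\nor_\infty=1$ and that $\widetilde u_n$ solves
\[
-\Delta \widetilde u_n=\nor u_n\nor_\infty^{\,p_n-1-\frac{(p_n-1)(2+\a)}{2+\a}}|x|^\a\widetilde u_n^{\,p_n}=|x|^\a\widetilde u_n^{\,p_n}\quad\text{in }B_{\mu_n}(0),
\]
with $\widetilde u_n=0$ on $\partial B_{\mu_n}(0)$, where $\mu_n\to+\infty$ by Lemma \ref{p-to-palfa}. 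Since $0\le\widetilde u_n\le 1$ and $p_n\to p_\a$, the right-hand side $|x|^\a\widetilde u_n^{\,p_n}$ is bounded in $L^\infty_{\mathrm{loc}}(\R^N)$; standard elliptic estimates ($W^{2,q}_{\mathrm{loc}}$ followed by Schauder) give a subsequence converging in $C^\infty_{\mathrm{loc}}(\R^N)$ to a radial $U\in C^2(\R^N)$ with $0\le U\le 1$, $U(0)=1$, solving $-\Delta U=|x|^\a U^{p_\a}$ in $\R^N$. Passing to radial coordinates and using the ODE satisfied by $\widetilde u_n$ one also checks $U'(0)=0$.

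Next I would identify $U$ with the explicit profile in \eqref{U}. Either one quotes the classification of bounded radial solutions of \eqref{eq-U} (this is the Henon analogue of the Talenti bubble and is known — see e.g.\ the references in the introduction), or one argues directly: the function $U(r)$ is radial, positive near $0$, nonincreasing, and any such solution of the ODE $-(r^{N-1}U')'=r^{N-1+\a}U^{p_\a}$ with $U(0)=1$, $U'(0)=0$ is unique by Cauchy–Lipschitz for the associated first-order system away from $r=0$ together with a fixed-point/uniqueness argument at $r=0$ (as in \cite{NN}); then one verifies by a direct substitution that $U(x)=(1+C_\a|x|^{2+\a})^{-\frac{N-2}{2+\a}}$ with $C_\a=\frac1{(N-2)(N+\a)}$ solves the equation, so it must be the solution. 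Because the limit is the same for every subsequence, the whole sequence $\widetilde u_n$ converges to $U$ in $C^\infty_{\mathrm{loc}}(\R^N)$.

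Finally, for the pointwise bound \eqref{stimautilde} I would work in radial coordinates on $(0,\mu_n)$ and compare $\widetilde u_n$ with $U$ directly. Both solve the same ODE $-(r^{N-1}w')'=r^{N-1+\a}w^{p}$ — with $p=p_n$ for $\widetilde u_n$ and $p=p_\a$ for $U$ — but here the cleanest route is to exploit that $\widetilde u_n$ and $U$ each satisfy, after one integration from $0$, an integral identity: from $-(r^{N-1}\widetilde u_n')'=r^{N-1+\a}\widetilde u_n^{\,p_n}$ and $\widetilde u_n'(0)=0$ one gets $\widetilde u_n(r)=1-\int_0^r s^{1-N}\int_0^s t^{N-1+\a}\widetilde u_n^{\,p_n}(t)\,dt\,ds$, and similarly for $U$ with exponent $p_\a$. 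Since $0\le\widetilde u_n\le1$ we have $\widetilde u_n^{\,p_n}\ge\widetilde u_n^{\,p_\a}$ pointwise (as $p_n<p_\a$ and $\widetilde u_n\le 1$), so $\widetilde u_n$ is a supersolution of the $U$-equation on $B_{\mu_n}(0)$ with $\widetilde u_n=0\le U$ on the boundary sphere; a maximum-principle comparison for the operator $-\Delta-|x|^\a c(x)$ (with $c(x)$ chosen between $\widetilde u_n^{\,p_\a-1}$ and $U^{p_\a-1}$ along the segment joining the two, exactly as $w_n$ was built in the proof of Theorem \ref{uniq}) then yields $\widetilde u_n\le U$ on $B_{\mu_n}(0)$. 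The main obstacle is making this comparison rigorous: one must ensure the potential in the linearized operator is bounded and that the relevant domain does not carry a nontrivial first eigenvalue obstruction; this is handled by noting $U^{p_\a-1},\widetilde u_n^{\,p_\a-1}\le 1$ so the comparison operator is a bounded perturbation of $-\Delta$ for which the weak maximum principle holds on every ball, and the sign condition on the boundary closes the argument.
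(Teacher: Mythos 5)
Your blow-up analysis for the first part of the statement is essentially the same as the paper's: the rescaled functions solve $-\Delta\widetilde u_n=|x|^{\a}\widetilde u_n^{p_n}$ in $B_{\mu_n}(0)$ with $\widetilde u_n(0)=\nor\widetilde u_n\nor_\infty=1$ and a locally bounded right-hand side, standard elliptic estimates produce $C^\infty_{\mathrm{loc}}$ subsequential convergence to a radial bounded solution of \eqref{eq-U}, and the limit is identified with \eqref{U} by the classification of radial entire solutions (the paper cites \cite{GS81global}). That is fine.

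The second part is where your argument has a genuine gap. After noting $\widetilde u_n^{p_n}\geq\widetilde u_n^{p_\a}$ (valid because $0\le\widetilde u_n\le1$ and $p_n<p_\a$), you set $\varphi=U-\widetilde u_n$, write $-\Delta\varphi\le|x|^\a c(x)\varphi$ with a bounded nonnegative $c$, observe $\varphi>0$ on $\pa B_{\mu_n}(0)$, and invoke a "weak maximum principle for a bounded perturbation of $-\Delta$." But the weak maximum principle for $-\Delta-V$ with $V\ge0$ bounded does \emph{not} hold on arbitrary domains: it holds only when the first Dirichlet eigenvalue of $-\Delta-V$ on the domain is positive, which fails once the domain is large relative to $\|V\|_\infty$. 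Here the domain is $B_{\mu_n}(0)$ with $\mu_n\to+\infty$, and the potential $|x|^\a c(x)$ is comparable to $p_\a|x|^\a\widetilde u_n^{\,p_\a-1}$; after undoing the rescaling this is precisely the linearized operator $-\Delta-p\,|x|^\a u_p^{p-1}$ at the radial solution, whose first Dirichlet eigenvalue on $\B$ is \emph{negative} ($\L_{1,0}=1/p<1$, see Step~1 of Theorem~\ref{t1}). So the operator you want to use the maximum principle for has a negative first eigenvalue on $B_{\mu_n}(0)$ for large $n$, and the conclusion $\varphi\ge0$ does not follow from $\varphi\ge0$ on the boundary and $(-\Delta-|x|^\a c)\varphi\le0$. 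The sign condition on the boundary does not "close the argument," contrary to what you claim. The paper avoids this obstruction entirely by a change of variables $t=(N-2)^{\frac{2(N-2)}{2+\a}}|x|^{-(N-2)}$, $y_n(t)=\widetilde u_n(x)$, which turns each $\widetilde u_n$ into the unique solution of an Emden--Fowler equation $y''+t^{-\kappa}y^{p_n}=0$ with $\lim_{t\to\infty}y_n=1$ and fixed parameter $\kappa=\frac{2(N-1)+\a}{N-2}>2$; the pointwise bound $y_n(t)\le\bigl(1+\tfrac{1}{(\kappa-1)t^{\kappa-2}}\bigr)^{-1/(\kappa-2)}$ is then a known ODE estimate of Atkinson and Peletier \cite{AP86}, and it is exactly equivalent to \eqref{stimautilde}. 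If you want to keep a PDE-style comparison, you would need a sweeping/Serrin-type argument or a genuinely different mechanism, not the naive weak maximum principle.
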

\begin{proof}
It is easy to check that every $\widetilde u_n$ has maximum equal to $1$ in $x=0$ and  solve
\[
\left\{\begin{array}{ll}
-\Delta \widetilde u_n=|x|^{\a}\widetilde u^{p_n} \qquad & \hbox{ in } B_{\mu_n}(0),\\
\widetilde u_n>0& \hbox{ in } B_{\mu_n}(0),\\
\widetilde u_n =0 & \hbox{ on }\pa B_{\mu_n}(0).
\end{array}\right.\]
Hence standard elliptic theory implies that $\widetilde u_n$ converges  to a radial bounded function $U$ that solves \eqref{eq-U}.
By \cite{GS81global}, the problem  \eqref{eq-U} has an unique radial bounded  solution, given by \eqref{U}.

One further transformation is useful to obtain estimate \eqref{stimautilde}:
\[ t= (N-2)^{\frac{2(N-2)}{2+\alpha}}|x|^{-(N-2)}, \qquad y_n(t)=\widetilde u_n(x) \]
for $t\ge T_n=\left({(N-2)^2}/{\nor u_n\nor_{\infty}^{p_n-1}}\right)^{\frac{N-2}{2+\alpha}}$.
The functions $y_n$ solve classical Emden-Fowler equations with the same parameter
\[ \kappa=\frac{2(N-1)+\alpha}{N-2}>2.\]  Actually every $y_n$ is characterized as the unique solution to
\begin{align*}\label{KF}
\left\{\begin{array}{l}
y''_n + t^{-\kappa}y^{p_n} = 0 , \\
\lim\limits_{t\to+\infty}y_n(t)=1.
\end{array}\right.
\end{align*}
In \cite{AP86} it has been proved that
\[ y_n(t)\le \left(1+ \dfrac{1}{(\kappa-1)t^{\kappa-2}}\right)^{-\frac{1}{\kappa-2}} ,\]
for $t\ge T_n$, which is equivalent to \eqref{stimautilde}.
\end{proof}

Eventually we are able to prove that the Morse index of $\up$ is $N+1$, for $p$ close to $p_{\alpha}$.
\begin{proposition}\label{uniq2} Let $\alpha\in(0,1]$ fixed. There is $\delta>0$ such that, for all $p\in(p_{\alpha}-\delta,p_{\alpha})$, the radial solution $u_p$ of \eqref{1} is nondegenerate and its  Morse index  is equal to $N+1$.
\end{proposition}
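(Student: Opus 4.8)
The strategy is to combine Theorem \ref{t1} with the asymptotic analysis of Proposition \ref{asym-p-alfa}. By Theorem \ref{t1} it suffices to show that $\L_{1,1}(p)<1$ whenever $p$ is sufficiently close to $p_\a$, and that $\L_{1,1}(p)\neq 1$ there (so that $u_p$ is nondegenerate and its Morse index equals $N+1$). Since we already know $\L_{2,1}(p)>1$ for all $p$ (Step 4 of Theorem \ref{t1}), once we prove $\L_{1,1}(p)<1$ for $p$ near $p_\a$ both conclusions follow at once. So the entire task reduces to the single inequality $\L_{1,1}(p)<1$ as $p\to p_\a$.

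The plan is to pass to the limit in the one-dimensional eigenvalue problem \eqref{Arad11} after the rescaling of Proposition \ref{asym-p-alfa}. First I would use the Rayleigh quotient characterization
\[
\L_{1,1}(p)=\inf_{\substack{\psi\in H^1_{0,rad}(\B)\\ \psi\neq 0}}\frac{\int_0^1 r^{N-1}(\psi')^2\,dr+(N-1)\int_0^1 r^{N-3}\psi^2\,dr}{p\int_0^1 r^{N-1+\a}\up^{p-1}\psi^2\,dr}
\]
and construct an explicit competitor built from the limit profile $U$. The natural candidate comes from the fact that the limit equation \eqref{eq-U} is translation-type degenerate: the functions $\partial_{x_i}U$ solve the linearized equation $-\Delta\phi=p_\a|x|^\a U^{p_\a-1}\phi$ in $\R^N$. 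Writing $\partial_{x_i}U=U'(r)\,\theta_i$ with $\theta_i=x_i/|x|$, one sees that $\Psi(r):=U'(r)$ (up to sign, and properly cut off near $r=1/\mu_n$) is a natural test function for the $k=1$ mode, since $Y_1\sim\theta_i$ carries exactly the eigenvalue $\mu_1=N-1$. I would rescale: set $\psi_n(r):=\zeta(r)\,U'(\mu_n r)$ with $\zeta$ a fixed smooth cutoff equal to $1$ on, say, $[0,1/2]$ and vanishing near $r=1$, evaluate the rescaled Rayleigh quotient for $\L_{1,1}(p_n)$ using $x\mapsto x/\mu_n$ together with the $C^\infty_{\mathrm{loc}}$ convergence $\widetilde u_n\to U$ and the bound \eqref{stimautilde}, and show the quotient tends to the value $1/p_\a<1$ (the $1/p_\a$ appearing exactly because $U'$ solves the linearization of \eqref{eq-U} with the factor $p_\a$). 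Hence $\limsup_{n}\L_{1,1}(p_n)\le 1/p_\a<1$, which gives the claim along every sequence $p_n\to p_\a$, hence for all $p$ in a left neighborhood of $p_\a$.

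The main obstacle is controlling the boundary/cutoff errors in this computation. The limit profile $U'(r)$ decays like $r^{-(N-1)}$ for large $r$ (equivalently $U'(\mu_n r)$ is small but not zero near $r=1$), and the weighted integral $\int r^{N-3}\psi_n^2$ together with $\int r^{N-1}(\psi_n')^2$ must be shown to converge to the corresponding integrals of $U'$ over $\R^N$, with the contribution of the cutoff region $r\sim 1$ being lower order after multiplying by the appropriate powers of $\mu_n\to+\infty$. One should check that after the change of variables the denominator $\int r^{N-1+\a}\widetilde u_n^{\,p_n-1}(U')^2$ also converges, using \eqref{stimautilde} and the explicit decay of $U$ to get a dominating function; the assumption $0<\a\le1$ enters, as in Theorem \ref{t1}, to guarantee that the relevant integrals converge (the weight $r^{N-3}$ is the borderline term) and that no competing mode $k\ge2$ can lower the Morse index further. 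Once these convergences are in place, the strict inequality $1/p_\a<1$ closes the argument, and nondegeneracy plus the value $N+1$ of the Morse index follow from Theorem \ref{t1}.
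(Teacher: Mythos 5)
Your overall plan coincides with the paper's: reduce to $\L_{1,1}(p)<1$ via Theorem~\ref{t1}, use the Rayleigh quotient for the $k=1$ mode, plug in (a cutoff of) $-u_p'$ as a competitor, rescale via Proposition~\ref{asym-p-alfa}, and pass to the limit using the explicit profile $U$. The paper's test function is $v=-u_p'\phi$ with $\phi$ a fixed cutoff, which is equivalent to your $\zeta(r)\,U'(\mu_n r)$ after rescaling and using $\widetilde u_n\to U$.

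However, there is a genuine error in the justification, and it hits exactly the step that carries the whole proposition. You assert that $\partial_{x_i}U$ solves $-\Delta\phi=p_\a|x|^\a U^{p_\a-1}\phi$ in $\R^N$ and conclude the Rayleigh quotient tends to $1/p_\a$. This is false: differentiating $-\Delta U=|x|^\a U^{p_\a}$ in $x_i$ gives
\[
-\Delta(\partial_{x_i}U)=\a|x|^{\a-2}x_i\,U^{p_\a}+p_\a|x|^\a U^{p_\a-1}\,\partial_{x_i}U,
\]
so the translation derivatives are \emph{not} in the kernel of the linearization, precisely because the weight $|x|^\a$ is not translation-invariant. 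In radial form, with $\psi=U'$ in the $k=1$ mode,
\[
-(r^{N-1}U'')'+(N-1)r^{N-3}U' = p_\a r^{N-1+\a}U^{p_\a-1}U'+\a r^{N-2+\a}U^{p_\a},
\]
so the Rayleigh quotient for this competitor does not converge to $1/p_\a$; it converges to
\[
1+\frac{\a\int_0^\infty \rho^{N-2+\a}U^{p_\a}U'\,d\rho}{p_\a\int_0^\infty \rho^{N-1+\a}U^{p_\a-1}(U')^2\,d\rho}<1 .
\]
The strict inequality thus rests entirely on the sign of the $\a$-correction term $\a\int\rho^{N-2+\a}U^{p_\a}U'\,d\rho<0$ (since $U'<0$), and identifying and estimating this term is the actual content of the paper's proof: it uses the ODE \eqref{wp} for $w_p=-u_p'$ to isolate the correction $\a\int_0^1 r^{N-2+\a}u_p^{p}u_p'\phi^2\,dr$, rescales, and shows the cutoff error $\int r^{N-1}(u_p'\phi')^2\,dr$ is $O(\mu_p^{-N})$ while the correction tends to the negative constant above. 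Note also that even in the (hypothetical) translation-invariant case where $U'$ were in the kernel, the $k=1$ Rayleigh quotient with $\psi=U'$ would tend to $1$, not to $1/p_\a$, and the argument would give nothing; the value $1/p_\a$ is the $k=0$ limit obtained with $\psi=U$, so those two modes seem to have been conflated. Your outline of the cutoff/decay estimates is otherwise on the right track, but without the $\a$-correction term the inequality $\L_{1,1}(p)<1$ does not follow, so the proof as written has a gap.
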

\begin{proof}
By Theorem \ref{t1}, it suffices to show that $\L_{1,1}(p)<1$, for $p$ in a suitable left neighborhood of $p_{\alpha}$. To this end, we first remark that
\[ \L_{1,1}(p) = \inf\limits_{v\in H^1_{0,\mathrm{rad}}(\B )} \dfrac{\int_0^1 r^{N-1}(v')^2 dr + (N-1)\int_0^1 r^{N-3}v^2 dr}{p\int_0^1r^{N-1+\alpha}\up^{p-1}v^2 dr}.
\]
Let $\phi$ be a cut-off function ($\phi\equiv 1$ in $B_{1/3}(0)$ and $\phi\equiv 0$ outside $B_{2/3}(0)$), and take $v=-u_p' \phi$  as a test function. It gives
\begin{align*}
\L_{1,1}(p) \le \dfrac{\int_0^1 r^{N-1}u''_p\left(u'_p\phi^2\right)' dr + \int_0^1 r^{N-1}(u'_p\phi')^2 dr + (N-1)\int_0^1 r^{N-3}(u_p'\phi)^2 dr}{p\int_0^1r^{N-1+\alpha}\up^{p-1}(u_p'\phi)^2 dr}.
\end{align*}
Integrating by parts the first integral in the right hand side yields
\begin{align*}
\int_0^1 r^{N-1}u''_p\left(u'_p\phi^2\right)' dr= -\int_0^1 \left(r^{N-1}u''_p\right)'u'_p\phi^2 dr =\\
-(N-1)\int_0^1 r^{N-3}\left(u'_p\phi\right)^2 dr+p\int_0^1 r^{N-1+\alpha} u_p^{p-1}(u'_p\phi)^2 dr
+ \alpha\int_0^1 r^{N-2+\alpha} u_p^{p}u'_p\phi^2 dr
\end{align*}
by equation \eqref{wp}. Therefore
 \begin{align*}
\L_{1,1}(p) \le 1 +\dfrac{ \int_0^1 r^{N-1}(u'_p\phi')^2 dr + \alpha\int_0^1 r^{N-2+\alpha} u_p^{p}u'_p\phi^2 dr}{p\int_0^1r^{N-1+\alpha}\up^{p-1}(u_p' \phi)^2 dr},
\end{align*}
and the thesis follows by checking that
\begin{equation}\label{dis} \int_0^1 r^{N-1}(u'_p\phi')^2 dr <-\alpha\int_0^1 r^{N-2+\alpha} u_p^{p}u'_p\phi^2 dr\end{equation}
for $p$ near $p_{\alpha}$.
We use the notations introduced in Proposition \ref{asym-p-alfa} and perform the change of variable $\rho=\mu_p r$; it gives
\begin{align*}
\int_0^1 r^{N-1}(u'_p\phi')^2 dr = & \nor u_p\nor_{\infty}^{2}\, \mu_p^{4-N}\int_0^{\mu_p} \rho^{N-1}\left( \widetilde{u}'_p \widetilde{\phi}'_p\right)^2 d\rho,
\\
\int_0^1 r^{N-2+\alpha} u_p^{p}u'_p\phi^2 dr = & \nor u_p\nor_{\infty}^{2}\, \mu_p^{4-N}\int_0^{\mu_p} \rho^{N-2+\alpha} \widetilde{u}_p^{p}\, \widetilde{u}'_p \widetilde{\phi}_p^2 d\rho ,
\end{align*}
where $\widetilde{\phi}_p(\rho)=\phi(\rho/\mu_p)$. So inequality \eqref{dis} is equivalent to
\[\int_0^{\mu_p} \rho^{N-1}\left( \widetilde{u}'_p \widetilde{\phi}'_p \right)^2 d\rho <  -\alpha\int_0^{\mu_p} \rho^{N-2+\alpha} \widetilde{u}_p^{p}\, \widetilde{u}'_p \widetilde{\phi}_p^2 d\rho ,
\]
for $p$ close to $p_{\alpha}$, and we prove it by showing that
\[\lim\limits_{p\to p_{\alpha}}\int_0^{\mu_p} \rho^{N-1}\left( \widetilde{u}'_p \widetilde{\phi}'_p \right)^2 d\rho <  \alpha \lim\limits_{p\to p_{\alpha}}\int_0^{\mu_p} \rho^{N-2+\alpha} \widetilde{u}_p^{p}\, (-\widetilde{u}'_p) \widetilde{\phi}_p^2 d\rho .
\]
Indeed the term in the left side vanishes, because \eqref{stimautilde} implies that
\begin{align*}
-\widetilde{u}'_p(\rho) =  &  \dfrac{1}{\rho^{N-1}} \int_0^{\rho} r^{N-1+\alpha} \widetilde{u}^p_p(r) dr \le \dfrac{C}{\rho^{N-1}} \int_0^{+\infty} r^{N-1+\alpha} U^p_p(r) dr \le \dfrac{C}{\rho^{N-1}}
\end{align*}
for a new constant $C$, if $(N+\alpha)/(N-2)<p<p_{\alpha}$, and therefore
\begin{align*}
\int_0^{\mu_p} \rho^{N-1}\left( \widetilde{u}'_p \widetilde{\phi}'_p \right)^2 d\rho  \le \dfrac{C}{\mu_p^2} \int_0^{\mu_p} \rho^{-(N-1)} \left(  \phi'_p \left(\dfrac{\rho}{\mu_p}\right)\right)^2 d\rho = \dfrac{C}{\mu_p^N} \int_0^{1} r^{-(N-1)} \left(  \phi'_p (r)\right)^2 dr.
\end{align*}
Concerning the right side, we have by the same estimates that
 \begin{align*}
 \rho^{N-2+\alpha} \widetilde{u}_p^{p} (-\widetilde{u}'_p) \widetilde{\phi}_p^2 \le C \rho^{\alpha-1-p(N-2)} \le C \rho^{-(1+\varepsilon)}
\end{align*}
provided that $(\alpha+\varepsilon)/(N-2)\le p<p_{\alpha}$.
So we may pass to the limit inside the integral and obtain
\[\lim\limits_{p\to p_{\alpha}}\int_0^{\mu_p} \rho^{N-2+\alpha} \widetilde{u}_p^{p}\, (-\widetilde{u}'_p) \widetilde{\phi}_p^2 d\rho =
\int_0^{+\infty} \rho^{N-2+\alpha} U^{p}\, (-U'_p) d\rho >0.
\]
\end{proof}
\begin{remark}
The nondegeneracy of the radial solution can be used, for example, to find solutions of problem \eqref{1} if $\Omega$ is a suitable perturbation of $\B$ and the exponent $p$ is supercritical. This is done in \cite{cowan13supercritical} and also in \cite{GG} but from another point of view.
\end{remark}
%

\section{The bifurcation result}\label{s4}

In this section we prove Theorem \ref{tbif}, namely we show that there is at least one branch of positive nonradial solutions that leads off from the curve of radial solutions, and that is unbounded in the Holder space $C^{1,\g}_0(\overline \B )$.
We shall only put in evidence the outline of the proof and give a quick sketch of the technical details, because  Theorem \ref{tbif} follows from the results obtained in the previous sections in a way similar to \cite[Theorems 2.1 and 3.3]{G} (see also \cite{AM}).

Before entering the details, we recall some notions and fix some notations.
The couple  $(\bar p,u_{\bar p})$ is said a {\em  nonradial bifurcation point } if in  every neighborhood of $(\bar{p},u_{\bar{p}})$  in the product space $(1,p_{\a})\times C^{1,\g}_0(\overline \B )$ there exists a couple $(p,v)$ such that $v$ is a nonradial solution of (\ref{1}) related to the exponent $p$.
If $(\bar{p},u_{\bar{p}})$ is a bifurcation point, then $\bar p$ must be a degeneracy point for $u_p$, i.e.
the related radial solution  $u_{\bar p}$ has to be degenerate. We have proved in Theorem \ref{t1} that  these degeneracy points must satisfy $\L_{1,1}(\bar{p})=1$. We also say that a degeneracy point is a {\em Morse index changing point} if, in addition, the quantity $\L_{1,1}(p)-1$ changes sign at $\bar{p}$.
It has to be noticed that degeneracy points do exist, and they are a finite number.

\begin{proposition}\label{deg-points}
For any $\a\in(0,1]$ there exists a finite number of degeneracy points.
\end{proposition}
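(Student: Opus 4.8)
The plan is to reduce the statement to a property of the first eigenvalue function $p\mapsto\L_{1,1}(p)$ on the interval $(1,p_\a)$, established through the asymptotic analysis of Section~\ref{s3}. By Theorem~\ref{t1}, the degeneracy points of $u_p$ are precisely the zeros of the function $p\mapsto\L_{1,1}(p)-1$; so it suffices to show that this function has at most finitely many zeros in $(1,p_\a)$. First I would record that $p\mapsto\L_{1,1}(p)$ is continuous on $(1,p_\a)$: this follows from continuous dependence of the radial solution $u_p$ on the parameter $p$ (in, say, $C^1([0,1])$, which one gets from the uniqueness of $u_p$ among radial solutions together with elliptic estimates and a standard compactness/ODE-shooting argument) and from the Rayleigh quotient characterization of $\L_{1,1}(p)$ recalled in Step~3 of the proof of Theorem~\ref{t1} and in the proof of Proposition~\ref{uniq2}, since the weight $p\,r^{N-1+\a}u_p^{p-1}$ depends continuously on $p$.

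Next I would invoke the endpoint information: by Theorem~\ref{uniq} (and Theorem~\ref{t1}) the Morse index of $u_p$ is $1$ near $p=1$, hence $\L_{1,1}(p)\ge 1$ there, while by Proposition~\ref{uniq2} the Morse index is $N+1$ near $p=p_\a$, hence $\L_{1,1}(p)<1$ there. So $\L_{1,1}-1$ is positive (or zero) near the left endpoint and strictly negative near the right endpoint; a fortiori it has at least one zero, and there is a last $p$ (resp.\ a first $p$) in any prescribed subinterval where the sign changes — but this alone does not bound the number of zeros. To get finiteness one needs some rigidity: the natural route is \emph{real-analyticity} of $p\mapsto\L_{1,1}(p)$. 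Since $\L_{1,1}(p)$ is a simple eigenvalue of the Sturm--Liouville problem \eqref{Arad11} with weight depending analytically on $p$ (the map $p\mapsto u_p$ is itself real-analytic by the implicit function theorem applied to the radial problem, using that $u_p$ is radially nondegenerate by Proposition~\ref{lin-rad}), analytic perturbation theory (Kato) gives that $\L_{1,1}(\cdot)$ is real-analytic on $(1,p_\a)$. A real-analytic function that is not identically $1$ on the interval — and it is not, since it is strictly below $1$ near $p_\a$ — has only isolated zeros, and since the zeros all lie in a compact subinterval (they are bounded away from both $p=1$, where $\L_{1,1}>1$, and from $p_\a$, where $\L_{1,1}<1$), there are only finitely many. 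This proves the proposition.

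The main obstacle is establishing the regularity — ideally analyticity — of $p\mapsto\L_{1,1}(p)$, which in turn rests on the analytic (or at least $C^1$) dependence of the radial solution $u_p$ on $p$. The delicate points are: (i) that $u_p$ is the \emph{unique} radial solution (cited from \cite{NN}) and is radially nondegenerate (Proposition~\ref{lin-rad}), so that the implicit function theorem applies in the space $H^1_{0,\mathrm{rad}}(\B)$ to yield a smooth, indeed analytic, branch $p\mapsto u_p$; (ii) that the weight $p\,r^{N-1+\a}u_p^{p-1}$ inherits this analyticity and stays positive, so the simple first eigenvalue $\L_{1,1}(p)$ of the associated Sturm--Liouville problem depends analytically on $p$ by Kato's theory; and (iii) that the zeros of $\L_{1,1}-1$ cannot accumulate at the endpoints, which is exactly what Theorems~\ref{uniq} and \ref{uniq2} provide. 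If one is content with a softer statement, $C^1$-dependence of $u_p$ on $p$ together with a Sturm-comparison monotonicity argument for $\L_{1,1}$ could also be used, but analyticity gives the cleanest finiteness conclusion; I would present the analytic argument.
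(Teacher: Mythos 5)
Your strategy is essentially the paper's own: by Theorem~\ref{t1}, reduce to showing that $p\mapsto\L_{1,1}(p)-1$ has finitely many zeros; derive real-analyticity of $\L_{1,1}(\cdot)$ from real-analyticity of the branch $p\mapsto u_p$ together with Kato's perturbation theory; obtain the analytic branch from radial nondegeneracy (Proposition~\ref{lin-rad}) via the implicit function theorem; and use the endpoint asymptotics (Theorems~\ref{uniq}, \ref{uniq2}) to anchor the zeros away from $p=1$ and $p=p_\a$. This is the same skeleton the paper uses, and the conclusion (isolated zeros of a non-constant analytic function on a compact subinterval) is drawn in the same way.

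There is, however, a genuine gap at the point you flag as delicate point~(i): you assert that the implicit function theorem applies ``in the space $H^1_{0,\mathrm{rad}}(\B)$ to yield a smooth, indeed analytic, branch $p\mapsto u_p$''. This is not available: for non-integer $p$ the superposition operator $u\mapsto u^p$ is not a real-analytic map on $H^1_0(\B)$, nor even on the positive cone in ordinary $C^0_0(\B)$, so one cannot invoke the analytic implicit function theorem in those spaces directly. The paper's proof addresses exactly this obstruction. It first establishes the two-sided Hopf-type bounds $c\,\varphi_1\le u_p\le C\,\varphi_1$, then works in the weighted cone $C_{\varphi_1}^+\subset C^0_0(\B)$ of radial functions comparable to the first Dirichlet eigenfunction $\varphi_1$; on this cone Dancer's result \cite[Proposition~1]{D1} guarantees that $(p,u)\mapsto u^p$ is real-analytic, and only then is the analytic implicit function theorem applied to $F(p,u)=u-(-\D)^{-1}(|x|^\a u^p)$, with invertibility of $\pa_u F$ coming from Proposition~\ref{lin-rad}. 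That cone construction is the actual technical content of the proposition's proof and cannot be replaced by a bare citation of the implicit function theorem in $H^1$; without it, the analyticity of $p\mapsto u_p$, and hence of $\L_{1,1}(p)$, is unjustified.
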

\begin{proof}
By Theorem \ref{t1}, the degeneracy points are the zeros of the map $p \mapsto \Lambda_{1,1}(p) -1$.
Because the arguments in the proofs of Theorems \ref{uniq} and \ref{uniq2} yield that $\Lambda_{1,1}(p)-1$ changes sign in $(1,p_{\a})$,  the thesis follows once we prove that $\Lambda_{1,1}(p)$ is real analytic.
To this end it suffices to check that  $u_p$ is real analytic w.r.t.~$p$, by a general result due to Kato \cite{K}.
\\
Let $\varphi_1$ be the first positive eigenfunction of $-\Delta $ in $\B $ with Dirichlet boundary conditions.
We show that, for every $p\in(1,p_\a)$, there are two positive constants $c$ and $C$ so that
\begin{equation}\label{cC}
c\, \varphi_1 \le u_p \le C\, \varphi_1 \quad \mbox{ in the closure of } \B .
\end{equation}
Indeed, the function ${u_p}/{\varphi_1}$ is nonnegative, radial and verifies
\[ \lim_{r\to 1^-} \dfrac{u_p(r)}{\varphi_1(r)}=\dfrac{{u_p}'(1)}{{\varphi_1}'(1)}>0 \]
by the Hopf boundary Lemma. This implies that \eqref{cC} holds at least  in a neighborhood of $\de \B $, and then it has to hold  (changing eventually the constants) in the interior of $\B $.
Next, let
\[ C_{\varphi_1}=\left\{u\in C^0_0(\B )\, : \, u \hbox{ is radial and } u/{\varphi_1} \hbox{ is bounded} \right\}, \]
\[ C_{\varphi_1}^+=\left\{u\in  C_{\varphi_1}\, : \, u >0 \hbox{ in }\B  \right\}. \]
Estimate \eqref{cC} yields that  the map $(1,+\infty)\times C_{\varphi_1}^+\ni (p,u) \mapsto u^p\in C_{\varphi_1}^+$ is analytic at any  point $(p,u_p)$ via \cite[Proposition 1]{D1}.
Then also the map $F: (1,p_\a)\times C_{\varphi_1}^+\to C_{\varphi_1}^+$,  $F(p,u)=u- \left(-\Delta\right)^{-1}\left(|x|^{\a}u^p\right)$,  is real analytic near $(p,u_p)$.
Now the curve $(p,\up)$ (as $1<p<p_\a$) is the zero-level set of the function $F$, and $\partial_u F(p,\up)$ is invertible in $C_{\varphi_1}$ by Proposition \ref{lin-rad}. Hence the analytic version of the Implicit Function Theorem gives the thesis.
\end{proof}

An immediate consequence of Proposition \ref{deg-points} and Theorems \ref{uniq}, \ref{uniq2} is the following.
\begin{proposition}\label{therearemicp}
For any $\a\in(0,1]$ there exists an odd number of Morse index changing points in $(1,p_{\a})$.
\end{proposition}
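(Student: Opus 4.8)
The plan is to count the sign changes of the real--analytic function $h(p):=\Lambda_{1,1}(p)-1$ on the interval $(1,p_\a)$, and to read off the parity of the number of Morse index changing points from its behaviour at the two endpoints.

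First I would pin down the sign of $h$ near the ends of the existence range. By Theorem \ref{uniq}, for $p$ in a right neighbourhood of $1$ the radial solution $u_p$ is nondegenerate with Morse index $1$; together with the characterization in Theorem \ref{t1} this forces $\Lambda_{1,1}(p)>1$, i.e.\ $h(p)>0$, there. Symmetrically, by Proposition \ref{uniq2} the Morse index equals $N+1$ for $p$ in a left neighbourhood of $p_\a$, so Theorem \ref{t1} gives $\Lambda_{1,1}(p)<1$, i.e.\ $h(p)<0$, there. Thus $h$ has opposite strict signs at the two ends of $(1,p_\a)$.

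Next I would invoke Proposition \ref{deg-points}: the function $h$ is real analytic on $(1,p_\a)$ and vanishes only at finitely many points $p_1<p_2<\dots<p_k$, which are exactly the degeneracy points. On each of the open subintervals $(1,p_1),(p_1,p_2),\dots,(p_{k-1},p_k),(p_k,p_\a)$ the continuous function $h$ has no zero, hence keeps a constant sign $\sigma_0,\sigma_1,\dots,\sigma_k\in\{+1,-1\}$, with $\sigma_0=+1$ and $\sigma_k=-1$ by the previous step. By the very definition recalled before Proposition \ref{deg-points}, a degeneracy point $p_i$ is a Morse index changing point precisely when $h$ changes sign at $p_i$, that is, when $\sigma_{i-1}\neq\sigma_i$. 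Therefore the number $M$ of Morse index changing points equals the number of indices $i\in\{1,\dots,k\}$ with $\sigma_{i-1}\neq\sigma_i$; since $\sigma_k=\sigma_0\,(-1)^M$ and $\sigma_0=+1$, $\sigma_k=-1$, we conclude $(-1)^M=-1$, i.e.\ $M$ is odd (in particular $M\geq 1$, so Morse index changing points do exist).

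The argument is essentially elementary once the inputs are in place, so I do not expect a genuine obstacle; the only subtlety worth spelling out is the translation, through Theorem \ref{t1}, of the qualitative statements ``Morse index $1$ and nondegenerate'' and ``Morse index $N+1$'' into the strict inequalities $h>0$ and $h<0$, which is what guarantees that $h$ really does change sign overall on $(1,p_\a)$ and that the finitely many zeros provided by Proposition \ref{deg-points} are the only obstruction to monotone sign.
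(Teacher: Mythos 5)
Your argument is exactly the one the paper intends when it calls Proposition \ref{therearemicp} an "immediate consequence" of Proposition \ref{deg-points} and Theorems \ref{uniq}, \ref{uniq2}: translate the endpoint Morse index data via Theorem \ref{t1} into strict signs of $h(p)=\Lambda_{1,1}(p)-1$ near $1$ and $p_\alpha$, use analyticity to ensure finitely many zeros, and count sign changes. It is correct and spelled out in the same spirit the paper leaves implicit.
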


Such Morse index changing points are crucial because we are able to prove that they give rise to bifurcation. Indeed, we have:
\remove{We introduce a family of operators $T(p,v):(1,p_{\a})\times X\to X$, defined by
\[ T(p,v):=\left(-\D\right)^{-1}\left(|x|^{\a} |v|^{p-1}v\right).\]
It is easily seen that  $T$  is a compact operator for every fixed $p$ and is continuous with respect to $p$. We thus use it to perturb identity and define $S(p,v):(1,p_{\a})\times X\to X$ as
\[ S(p,v):=v-T(p,v).\]
A function $v\in X$  solves \eqref{1} with exponent $p$ if and only if  $(p,v)$ is in the kernel of $S$ and $v>0$ in $\B $.
\\
We claim that, if $\bar p$ is a Morse-index changing point, then $m(p)$, changes exactly by $1$, i.e.
\begin{equation}\label{2.4}
|m(\bar{p}+\d)-m(\bar{p}-\d)|=1
\end{equation}
as $\d>0$ is small enough.
To prove this claim, we recall that the eigenspace of the Laplace-Beltrami operator on  $S^{N-1}$, spanned by the eigenfunctions corresponding to the eigenvalue  $\mu_1=N-1$ which are $O(N-1)$ invariant, is one dimensional (see Smoller and Wasserman \cite{SW}).
On the other hand the eigenspaces of \eqref{A} are generated by the product of the radial eigenfunctions $\psi_{i,k}$ for the corresponding spherical harmonics $Y_k$
(see Remark \ref{passaggioa1dim}).
In particular the eigenspace related to $\L_{1,1}$, restricted to the space $X$, is one-dimensional, and this gives \eqref{2.4}.
\\
From the change in the  Morse index in  \eqref{2.4} it is easy to obtain the bifurcation at the point $(\bar p,u_{\bar p})$ using an argument of topological degree.
}

\begin{theorem}\label{t-bif}
If $\bar p$ is a Morse index changing point, then $(\bar p,u_{\bar p})$ is a nonradial bifurcation point.
\end{theorem}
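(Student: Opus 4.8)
The plan is to realize nonradial solutions of \eqref{1} as zeros of a compact perturbation of the identity on the symmetric space $X$ defined in \eqref{X}, and then to detect bifurcation at $(\bar p, u_{\bar p})$ by an odd-crossing argument on the Leray--Schauder degree. Concretely, one works with the operator $T(p,v):=(-\Delta)^{-1}(|x|^{\a}|v|^{p-1}v)$ acting on $X\subseteq C^{1,\g}_0(\overline\B)$; it is compact in $v$ and continuous in $p$ by standard elliptic regularity, so $S(p,v):=v-T(p,v)$ is an admissible map for degree theory. A function $v\in X$ with $v>0$ solves \eqref{1} exactly when $S(p,v)=0$; since $u_p\in X$, the curve $p\mapsto (p,u_p)$ is a curve of trivial zeros of $S$. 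The first step is therefore to set up the degree-theoretic framework and record that, since $u_p$ is isolated among radial solutions by Proposition \ref{lin-rad} and varies analytically (Proposition \ref{deg-points}), the only possible source of other zeros near the curve comes from the $k=1$ mode.

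The heart of the matter is the computation of the local Leray--Schauder index of $S(p,\cdot)$ at $u_p$, for $p$ slightly below and slightly above $\bar p$. By the standard Leray--Schauder formula, this index equals $(-1)^{m(p)}$, where $m(p)$ is the sum of the multiplicities of the eigenvalues of the linearization $\partial_v T(p,u_p)=(-\Delta)^{-1}(p|x|^{\a}u_p^{p-1}\cdot)$ that exceed $1$, i.e.\ (via the correspondence in Lemma~\ref{equiv-Morse-index}) the Morse index of $u_p$ computed in the restricted space $X$. Here the key structural fact, due to Smoller--Wasserman \cite{SW}, is that in $X$ the eigenspace of $-\Delta_{S^{N-1}}$ attached to each $\mu_k$ is one-dimensional; combined with Remark~\ref{passaggioa1dim} and Theorem~\ref{t1} this shows that, as $p$ crosses a Morse index changing point $\bar p$, the relevant restricted Morse index jumps by exactly $1$ (the single simple eigenvalue $\L_{1,1}(p)$ of \eqref{Arad} for $k=1$ passes through the value $1$, contributing multiplicity $1$ in $X$ rather than multiplicity $N$). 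Hence the local index changes sign:
\[
\mathrm{ind}\big(S(\bar p-\d,\cdot),u_{\bar p-\d}\big)=-\,\mathrm{ind}\big(S(\bar p+\d,\cdot),u_{\bar p+\d}\big)
\]
for all small $\d>0$.

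Granting this sign change, bifurcation follows by a now-classical argument (as in \cite[Theorems 2.1 and 3.3]{G}, see also \cite{AM}): if no nonradial solutions accumulated at $(\bar p,u_{\bar p})$, one could excise a small ball around the trivial branch and, using the homotopy invariance of the degree in $p$ together with the fact that away from the branch $S(p,\cdot)$ has no zeros in a suitable neighborhood, deduce that the two local indices above must coincide — a contradiction. This produces, in every neighborhood of $(\bar p,u_{\bar p})$, a pair $(p,v)$ with $v\in X$, $v\neq u_p$, $S(p,v)=0$; positivity of $v$ (hence genuine solution of \eqref{1}) is retained for $v$ close enough to $u_{\bar p}>0$ in $C^{1,\g}_0(\overline\B)$, and $v$ is nonradial because Proposition~\ref{lin-rad} rules out radial degeneracy, so $u_{\bar p}$ is isolated among radial solutions.

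I expect the main obstacle to be the bookkeeping in the index computation rather than any deep new idea: one must verify carefully that the restriction to $X$ really collapses the $\mu_1=N-1$ eigenspace to dimension one while leaving the $k=0$ mode untouched, so that the parity of $m(p)$ flips by exactly one unit at $\bar p$ (and does \emph{not} flip at points where $\L_{1,1}=1$ without a sign change, nor due to the $k\ge 2$ modes, which Steps 2--4 of Theorem~\ref{t1} have excluded). A secondary technical point is ensuring that $S(p,\cdot)$ has no zeros other than $u_p$ on the boundary of the excised neighborhood, which uses the a priori separation of the radial branch from any putative bifurcating branch together with the compactness of $T$; this is routine but must be stated. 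Once these are in place, the topological conclusion is immediate, and Theorem~\ref{tbif} follows by combining Theorem~\ref{t-bif} with Proposition~\ref{therearemicp} (which guarantees at least one Morse index changing point) and with the global bifurcation theorem of Rabinowitz, the connected component being unbounded in $C^{1,\g}_0(\overline\B)$ because it cannot return to the radial branch (degeneracy points are finite) nor blow up at finite $p$ away from $p_\a$ by elliptic estimates, while Lemma~\ref{p-to-palfa} controls the behavior as $p\to p_\a$.
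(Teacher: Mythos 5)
Your proposal is correct and follows essentially the same route as the paper: work in the $O(N-1)$-invariant space $X$, use Smoller--Wasserman to show the restricted eigenspace for $\L_{1,1}$ is one-dimensional so that the restricted Morse index $m(p)$ jumps by exactly one at $\bar p$, and conclude via the change of sign of the Leray--Schauder index of $S(p,\cdot)=\mathrm{id}-(-\Delta)^{-1}(|x|^{\a}|\cdot|^{p-1}\cdot)$ together with Proposition~\ref{lin-rad} for nonradiality. The paper cites \cite[Theorem 2.1]{G} for the degree bookkeeping you spell out; the content is the same.
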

\begin{proof}
To prove the assertion  we argue in the set $X$ introduced in \eqref{X}, i.e.~the subspace of the functions of $C^{1,\g}_0(\overline \B )$ which are invariant w.r.t.~the orthogonal group in  $\R^{N-1}$, and we denote by $m(p)$ the Morse index of $u_p$ restricted to $X$.
We claim that, if $\bar p$ is a Morse-index changing point, then $m(p)$, changes exactly by $1$, i.e.
\begin{equation}\label{2.4}
|m(\bar{p}+\d)-m(\bar{p}-\d)|=1
\end{equation}
as $\d>0$ is small enough.
To prove this claim, we recall that the eigenspace of the Laplace-Beltrami operator on  $S^{N-1}$, spanned by the eigenfunctions corresponding to the eigenvalue  $\mu_k$ which are $O(N-1)$ invariant, is one dimensional (see Smoller and Wasserman \cite{SW}).
On the other hand the eigenspaces of \eqref{A} are generated by the product of the radial eigenfunctions $\psi_{i,k}$ for the corresponding spherical harmonics $Y_k$
(see Remark \ref{passaggioa1dim}).
In particular the eigenspace related to $\L_{1,1}$, restricted to the space $X$, is one-dimensional, and this gives \eqref{2.4}.
\\
We define a family of operators $S:(1,p_{\a})\times X\to X$ as
\[ S(p,v):=v-\left(-\D\right)^{-1}\left(|x|^{\a} |v|^{p-1}v\right) .\]
$S(p,v)$ is a compact perturbation of the identity for any $p$ fixed, and it is continous with respect to $p$.
A function $v\in X$  solves \eqref{1} with exponent $p$ if and only if  $(p,v)$ is in the kernel of $S$ (and $v>0$ in $\B $).
\\
From the change in the  Morse index in  \eqref{2.4} it is easy to obtain the bifurcation at the point $(\bar p,u_{\bar p})$ using an argument of topological degree applied at the operator $S(p,v)$ in a neigborhood of $(\bar p,u_{\bar p})$ as in  \cite[Theorem 2.1]{G}, and observing  that these bifurcating solutions  are nonradial since $u_p$ is radially nondegenerate for any $p$ by Lemma \ref{lin-rad}.
\end{proof}

\remove{Let $\bar{p}$ be a Morse index changing point. By a result of Smoller and Wasserman \cite{SW},  the eigenspace of the Laplace-Beltrami operator on  $S^{N-1}$, spanned by the
eigenfunctions corresponding to the eigenvalue  $\mu_1=N-1$ which are $O(N-1)$ invariant, is
one-dimensional. {\textcolor{blue}{non ho capito}}  This implies that
\begin{equation}\label{2.4}
|m(\bar{p}+\d)-m(\bar{p}-\d)|=1
\end{equation}
if $\d>0$ is small enough, where $m(p)$ is the Morse index of the solution $\up$ in the space $X$. \\
Let us suppose by contradiction that $(\bar{p},u_{\bar{p}})$ is not a
bifurcation point. Then there exists an $\e_0>0$ such that for
$\e\in (0,\e_0)$ and every $c\in (0,\e_0)$ we have
\begin{equation}\label{2.5}
S(p,v)\neq 0,\quad \forall p\in (\bar{p}-\e,\bar{p}+\e), \text{ for any } v\in X\hbox{
  such that }\nor v-\up\nor_{X}\leq c \hbox{ and }v\neq u_p.
\end{equation}
We can also choose $\e_0$
in such a way that the interval $[\bar{p}-\e,\bar{p}+\e]$ does not contain
degeneracy points of (\ref{1}) other than $\bar{p}$. Let us consider the
set  $\G:=\{(p,v)\in[\bar{p}-\e,\bar{p}+\e]\times  X\,:\, \nor v-\up\nor_X
<c\}$.  Since $S(p,\cdot)$ is a compact perturbation of the identity, it makes
sense to consider the Leray-Schauder topological degree $\mathit{deg}
\left(  S(p,\cdot),\G_{p},0\right)$ of $S(p,\cdot)$ on the set
 $\G_p:=\{v\in X\hbox{ such that } (p,v)\in \G\}$. From
(\ref{2.5}) it follows that there  exist no  solutions of $S(p,v)=0$ on $\de
_{[\bar{p}-\e,\bar{p}+\e]\times X}\G$. By the homotopy invariance of
the degree, we get
\begin{equation}\label{2.6}
\mathit{deg} \left( S(p,\cdot),\G_p,0\right)\hbox{ is constant on }[\bar{p}-\e,\bar{p}+\e].
\end{equation}
Since the linearized  operator
$T_u(p,u)$ is invertible for  $p=\bar{p}+\e$ and
$p=\bar{p}-\e$,
$$\mathit{deg} \left( S(\bar{p}-\e,\cdot),\G_{\bar{p}-\e},0\right)=(-1)^{m(\bar{p}-\e)}$$
and
$$\mathit{deg} \left( S(\bar{p}+\e,\cdot),\G_{\bar{p}+\e},0\right)=(-1)^{m(\bar{p}+\e)}.$$
By the choice of  $\bar{p}$ and of the space  $X$ we know that (\ref{2.4})
holds,  and then
$$\mathit{deg} \left( S(\bar{p}-\e,\cdot),\G_{\bar{p}-\e},0\right)=-\mathit{deg} \left(
S(\bar{p}+\e,\cdot),\G_{\bar{p}+\e},0\right)$$
contradicting (\ref{2.6}). Then $(\bar{p},u_{\bar{p}})$ is a bifurcation point
and the bifurcating solutions are nonradial since $u_p$ is radially
nondegenerate for any $p$ as proved in Lemma \ref{lin-rad}.
\end{proof}}

Theorems \ref{therearemicp} and \ref{t-bif} state the existence of at least one bifurcation of non-radial solutions from the curve of radial solutions.
There actually is a branch of nonradial solutions. To be more precise, we set $\Sigma$ the closure of the set
\begin{equation}\label{3.1}
\{(p,v)\in (1,p_{\a})\times X \, : \,  S(p,v)=0 \, ,\, v\neq \up\}
\end{equation}
where $S(p,v)$ and $X$ are as defined in the proof of Theorem \ref{t-bif}.
If $(\bar p,u_{\bar p})$ is a nonradial bifurcation point, then $(\bar{p},u_{\bar{p}})\in \Sigma$. We denote by $\cC(\bar{p})$ the closed  connected component of $\Sigma$  which contains $(\bar{p},u_{\bar{p}})$.
Arguing as in \cite[Theorem 3.3, Step 1]{G}, one shows that  $\cC(\bar{p})$ is a  branch of nonradial solutions spreading from $\bar p$.

\begin{proposition}\label{positivity}
Let $\bar{p}$ be a Morse index changing point. If $(p,v)\in \cC(\bar{p})$ then $v$ is a solution of \eqref{1} with exponent $p$. In particular $v>0$ in $\B $.
\end{proposition}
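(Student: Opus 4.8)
The plan is to show that positivity propagates along the whole connected set $\cC(\bar p)$ by a continuation argument. First I would remark that, since $S$ is continuous and $\cC(\bar p)$ is contained in $\Sigma$, the closure of the set in \eqref{3.1}, every $(p,v)\in\cC(\bar p)$ satisfies $S(p,v)=0$; hence $v$ is a weak solution of $-\Delta v=|x|^{\a}|v|^{p-1}v$ in $\B$ with $v=0$ on $\de\B$, and by elliptic regularity $v\in C^{1,\g}_0(\overline\B)$. Thus the only point left to establish is the sign information $v>0$ in $\B$: once that is known the nonlinearity becomes $|x|^{\a}v^p$ and $v$ solves \eqref{1}.

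I would then introduce
\[ \mathcal P:=\big\{(p,v)\in\cC(\bar p)\ :\ v>0\hbox{ in }\B\big\} \]
and prove that $\mathcal P$ is nonempty, relatively open and relatively closed in $\cC(\bar p)$. Since $\cC(\bar p)$ is connected this forces $\mathcal P=\cC(\bar p)$, which is exactly the assertion (and as a byproduct shows that no sign-changing solution occurs on the branch). Nonemptiness is immediate, because $(\bar p,u_{\bar p})\in\cC(\bar p)$ and $u_{\bar p}>0$. For the openness, if $v_0>0$ in $\B$ then $-\Delta v_0=|x|^{\a}v_0^{p_0}\ge 0$, so the Hopf boundary Lemma gives $\de_\nu v_0<0$ on $\de\B$; since convergence inside $\cC(\bar p)$ is in particular convergence in $C^1_0(\overline\B)$, any $(p,v)\in\cC(\bar p)$ sufficiently close to $(p_0,v_0)$ remains positive, both in the interior and near $\de\B$. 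This is the standard argument.

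The delicate step is relative closedness. Let $(p_n,v_n)\in\mathcal P$ with $(p_n,v_n)\to(p,v)\in\cC(\bar p)$; then $p\in(1,p_{\a})$, $v\ge 0$, and since $|v_n|^{p_n-1}v_n=v_n^{p_n}\to v^p$ uniformly, passing to the limit gives $-\Delta v=|x|^{\a}v^p\ge 0$ in $\B$, $v=0$ on $\de\B$. By the strong maximum principle either $v>0$ in $\B$, and then $(p,v)\in\mathcal P$ as wanted, or $v\equiv 0$. To exclude the latter I would rescale as in Section \ref{s3}: $v\equiv 0$ forces $\nor v_n\nor_\infty\to 0$, and the functions $\bar v_n:=v_n/\nor v_n\nor_\infty$ satisfy $\nor\bar v_n\nor_\infty=1$, $\bar v_n=0$ on $\de\B$, and
\[ -\Delta\bar v_n=\nor v_n\nor_\infty^{p_n-1}\,|x|^{\a}\,\bar v_n^{p_n}\qquad\hbox{in }\B, \]
whose right-hand side is bounded by $\nor v_n\nor_\infty^{p_n-1}\to 0$; here it is essential that $p$ lies in the interior of $(1,p_{\a})$, so that $p_n-1$ stays bounded below by a positive constant. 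Elliptic estimates then give $\bar v_n\to\bar v$ in $C^1_0(\overline\B)$ with $-\Delta\bar v=0$ and $\bar v=0$ on $\de\B$, hence $\bar v\equiv 0$, contradicting $\nor\bar v\nor_\infty=\lim_n\nor\bar v_n\nor_\infty=1$.

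I expect the exclusion of the degenerate limit $v\equiv 0$ to be the only genuinely nontrivial point; the passage to the limit in the equation and the openness of $\mathcal P$ are routine, and once all three properties of $\mathcal P$ are in hand, connectedness of $\cC(\bar p)$ closes the argument, giving $v>0$ --- hence a bona fide solution of \eqref{1} --- at every point of the branch.
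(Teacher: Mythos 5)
Your proof is correct and follows the same strategy as the paper's (commented-out) proof: show that the set of positive solutions in $\cC(\bar p)$ is nonempty, relatively open, and relatively closed, then invoke connectedness. The only point where you deviate is in excluding the degenerate limit $v\equiv 0$ in the closedness step: the paper argues that the trivial solution is nondegenerate (the linearization at $0$ is $I-(-\Delta)^{-1}\cdot 0 = I$, invertible), hence isolated by the Implicit Function Theorem, so a branch of nontrivial solutions cannot accumulate there; you instead rescale $\bar v_n=v_n/\nor v_n\nor_\infty$ and pass to the limit to get a nonzero harmonic function vanishing on $\de\B$, a contradiction. Both routes are valid and rely equally on $p>1$ (so that $p_n-1$ stays bounded below, equivalently so that $|v|^{p-1}v$ is differentiable at the origin), and each is a clean way to close the argument.
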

\remove{\begin{proof}
We will prove that if $(p,v_p)\in \cC(\bar{p})$ then $v_p$ is a solution
of (\ref{1}), in particular $v_p>0$ in $\B $.\\
To this end let us consider the subset $\cC\subset \cC(\bar{p})$ of points $(p,v_p)$ which are positive solutions of
$S(p,v_p)=0$. Obviously $(\bar{p},u_{\bar{p}})\in \cC$.  We will prove
that $\cC$ is closed and  open  in $\cC(\bar{p})$, hence
$\cC=\cC(\bar{p})$ since $\cC(\bar{p})$ is connected. \\
If $(p,v_p)$ is a point in the closure of $\cC$ then there is a
sequence of points $(p_n,v_n)$ in $\cC$ that converges to
$(p,v_p)$. As $n\to +\infty$ we get that $v_p$ is a solution of
$S(p,v_p)=0$ and $v_p\geq 0$ in $\B $. By the
maximum principle either $v_p>0$ or $v_p\equiv 0$ in $\B $. But the second case
is not possible since the
  trivial solution is nondegenerate for any $p>1$ and then
  isolated. This is due to the fact that the linearized operator at the trivial solution coincides with the operator $-\Delta$ and the equation $-\Delta v=0$ with Dirichlet boundary conditions does not have any nontrivial solution since the first eigenvalue of $-\Delta$ in $\B $ with Dirichlet boundary conditions is strictly positive. Then $v_{p}>0$ in $\B $, $(p,v_p)\in \cC$ and $\cC$ is
  closed. \\
Now we will show that $\mathcal{C}$ is open in $\mathcal{C}(\bar{p})$. Let $(p,v_p)$ be a point
in $\mathcal  C$. Then there exists $\e_1>0$ such that for every
$\e\in [0,\e_1)$ and for every $(\bar p,v_{\bar p})\in
  \mathcal{C}(\bar{p})$ such that $|p-\bar p|+\nor v_p-v_{\bar
    p}\nor_{X}< \e$ it holds $v_{\bar p}>0$ in $\B $, so that $(\bar p,v_{\bar p})\in \mathcal{C}$.\\
Indeed, if there exists no such an $\e_1$, one can find sequences
$(p_n,v_n)\subset
\mathcal C (\bar{p})$ and $x_n\in \B $ such that $(p_n,v_n)\to (p,v_p)$ in
$(1,p_{\a})\times X$ and $v_n(x_n)<0$ for every $k$. Since $x_n\in \B $,
up to a subsequence, $x_n\to \bar x\in \overline{ B}_1(0)$ and $v_p(\bar x)=0$
because $v_p>0$ in $\B $.
Hence $\bar x\in \de \B $. For every $k$, let $y_n\in \de \B $ be a point
that realizes  $d(x_n,\de \B )$. Since $v_n=0$ on $\de \B $ while
$v_n(x_n)<0$, there exists a point $\xi_n$, on the segment joining
$x_n$ with $y_n$, such that $\dfrac {\de v_n}{\de e_n}(\xi_n)<0$ where
$e_n$ is the inward normal to the boundary $\de \B $ at the point
$y_n$. Since $x_n$ and $y_n$ both converge to $\bar x$, $\xi_n\to
\bar x$. Moreover since $e_n$ is the inward normal to the
boundary in $y_n$, $e_n\to \bar e$ where $\bar e$ is the inward normal to the
boundary in $\bar x$. Passing to the limit we get $\dfrac {\de
  v_p}{\de \bar e}(\bar x)\leq 0$ contradicting the Hopf Boundary Point Lemma.
{\color{red}{accorciare la dimostrazione}}
\end{proof}}

The bifurcation  is indeed global and obeys at the so called Rabinowitz alternative.
\begin{theorem}\label{t-bif-globale}
Let $\bar{p}$ be a Morse index changing point, and $\cC(\bar{p})$  as before.
Then either
\begin{itemize}\item[a)] $\cC(\bar{p})$ is unbounded in $(1,p_{\a})\times
  X$,
\end{itemize}
or
\begin{itemize}
\item[b)] there exists another Morse index changing point $q\neq \bar p$, such that $(q,u_{q})\in \cC(\bar{p})$.
\end{itemize}
\end{theorem}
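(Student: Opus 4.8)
The plan is to deduce Theorem \ref{t-bif-globale} from Rabinowitz's global bifurcation theorem applied to the operator $S(p,v)=v-(-\Delta)^{-1}\!\left(|x|^{\a}|v|^{p-1}v\right)$ on $(1,p_{\a})\times X$, following the scheme of \cite[Theorem 3.3]{G} (see also \cite{AM,GGPS}). First I would collect the features already at hand: $S$ is a compact perturbation of the identity and continuous in $p$; by Proposition \ref{positivity} every $(p,v)\in\cC(\bar p)$ is a positive solution of \eqref{1}, and $v$ is nonradial unless $p$ is a degeneracy point, because $u_p$ is radially nondegenerate for all $p$ by Proposition \ref{lin-rad}; and the points of $\cC(\bar p)$ lying on the trivial branch $\{(p,u_p):p\in(1,p_{\a})\}$ are necessarily degeneracy points — near a nondegenerate $u_q$ the implicit function theorem leaves no other solution — hence a finite set by Proposition \ref{deg-points}.

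The key local ingredient I would establish is the Leray--Schauder index of $S(p,\cdot)$ at the point $u_p$ when $u_p$ is nondegenerate, i.e.~$\L_{1,1}(p)\ne1$. In that case $\partial_vS(p,u_p)=I-(-\Delta)^{-1}\!\left(p|x|^{\a}u_p^{p-1}\,\cdot\,\right)$ is invertible, and since an eigenvalue $\L$ of \eqref{A} corresponds to the eigenvalue $1/\L$ of the compact operator $(-\Delta)^{-1}\!\left(p|x|^{\a}u_p^{p-1}\,\cdot\,\right)$, its Leray--Schauder index in $X$ equals $(-1)^{m(p)}$, where $m(p)$ is the Morse index of $u_p$ restricted to $X$. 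By \eqref{2.4}, at a Morse index changing point $\bar p$ this index jumps, $(-1)^{m(\bar p+\d)}-(-1)^{m(\bar p-\d)}=\pm2\ne0$ for small $\d$; while at a degeneracy point which is not a Morse index changing point $m(p)$ is constant on a punctured neighbourhood, so the index is unchanged across it.

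Then I would argue as follows. Assume alternative (a) fails, i.e.~$\cC(\bar p)$ is contained in a compact subset of $(1,p_{\a})\times X$; being closed, $\cC(\bar p)$ is then compact. By the global alternative — whose hypothesis is met because the index of $S(p,\cdot)$ at $u_p$ changes across $\bar p$ — $\cC(\bar p)$ must meet the trivial branch at some $(q,u_q)$ with $q\ne\bar p$; the set of such $q$ is finite, say $\bar p=q_1,q_2,\dots,q_m$. Separating $\cC(\bar p)$ from the rest of $\Sigma$ (Whyburn's lemma), removing a tubular neighbourhood of the trivial branch away from $q_1,\dots,q_m$, and computing the Leray--Schauder degree of $S(p,\cdot)$ over a suitable bounded neighbourhood of $\cC(\bar p)$ as the parameter sweeps across $q_1,\dots,q_m$ — equivalently, translating the trivial branch to the origin and invoking the index balance in Rabinowitz's theorem — yields
\[
\sum_{i=1}^{m}\Big[(-1)^{m(q_i+\d)}-(-1)^{m(q_i-\d)}\Big]=0 .
\]
The $i=1$ term is $\pm2$ since $\bar p$ is a Morse index changing point, and every $q_i$ which is not a Morse index changing point contributes $0$; hence some $q_i$ with $i\ge2$ is a Morse index changing point, and that is alternative (b).

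The hard part will be the degree bookkeeping of the last step: one must isolate the compact component $\cC(\bar p)$ from the remainder of the solution set, excise the trivial branch away from the $q_i$'s so that $S$ stays nonzero on the boundaries that are introduced, and track how the degree on the slices jumps precisely when $p$ crosses each $q_i$ — the point where the local index computation enters. This is exactly what is carried out in \cite[Theorem 3.3]{G}; the remaining verifications (compactness of $\cC(\bar p)$ once (a) fails, and finiteness of $\cC(\bar p)\cap\{(p,u_p)\}$ by Proposition \ref{deg-points}) are immediate.
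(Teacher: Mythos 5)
Your proposal follows the same route as the paper: the Leray--Schauder degree framework with index $(-1)^{m(p)}$ in $X$, the jump \eqref{2.4} at a Morse index changing point, the Rabinowitz/Ize global alternative, and if the branch is bounded a degree-balance argument over the finitely many bifurcation points on it. In fact you merge into one argument what the paper splits between Theorem \ref{t-bif-globale} (whose proof is delegated to \cite[Theorem 3.3, Steps 2--5]{G}) and Proposition \ref{p1} (the Ize sum identity $\sum_j\big[(-1)^{m(p_j+\d)}-(-1)^{m(p_j-\d)}\big]=0$), obtaining alternative (b) directly as a byproduct. That unification is perfectly legitimate and arguably cleaner.

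One point you gloss over and should not: failing (a) only says $\cC(\bar{p})$ is bounded in norm, which is \emph{not} the same as being contained in a compact subset of $(1,p_{\a})\times X$, since the $p$-component could accumulate at the open endpoints $p=1$ or $p=p_{\a}$. The paper handles this explicitly at the start of its proof: Proposition \ref{positivity} guarantees that points of $\cC(\bar{p})$ are positive solutions, so Lemma \ref{p-to-palfa} (blowup of the sup-norm as $p\to p_{\a}$) excludes $p\to p_{\a}$ when the branch is norm-bounded, and Theorem \ref{uniq} (uniqueness and nondegeneracy of the radial solution near $p=1$) excludes $p\to 1$; this yields $\cC(\bar{p})\subset[1+\d,p_{\a}-\d]\times X$. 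Only then does the $S$-compactness argument give a compact set and make the degree bookkeeping legitimate. You flag ``compactness of $\cC(\bar{p})$ once (a) fails'' as ``immediate'', but it is exactly here that Section \ref{s3} enters, and without it the global-alternative machinery cannot be applied; fold that step in and the outline is complete.
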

\begin{proof}
Let us suppose that $\mathcal C (\bar{p})$ is bounded.
Then Proposition  \ref{uniq}, Lemma \ref{p-to-palfa} and Proposition \ref{positivity} imply that $\mathcal C(\bar{p})\subset [1+\d,p_{\alpha}-\d]\times X$ for some
  $\d>0$.
The rest of the proof follows exactly as in  \cite[Theorem 3.3, Steps 2--5]{G} and we do not report it.
\end{proof}
\remove{
Let us suppose, first,
that $\mathcal C (\bar{p})$  does
not contain any other nonradial bifurcation point $(p_j,
u_{p_j})\in\mathcal{S}$, where $\mathcal{S}$ is as defined in
  (\ref{S}),  with $j\neq
i$. Let $d_1>0$ be the distance between $\mathcal C (\bar{p})$ and the set
  of bifurcation points
$(p_j,u_{p_j})\in\mathcal{S} $ with $j\neq i$.  Let $0<\e<d:=\min(d_1,\d)$ such that
  there are not  degeneracy points $p_j$, for $j\neq i$, in the interval
  $[\bar{p}-2\e,  \bar{p}+2\e]$. Let $\mathcal O_1\subset (1,p_{\a})\times X$  be an
  $\e$-neighborhood of $\mathcal C (\bar{p})$ and let $\mathcal K:=\Sigma \cap \overline{
  \mathcal O_1}$. $\mathcal K$ is a compact metric space and $\de \mathcal O_1\cap
  \mathcal C (\bar{p})=\emptyset$.  Then, by Lemma \ref{vedi-app}, there exist disjoint
  compact subsets $K_1$ and $K_2$ of $\mathcal K$ such that $\mathcal C (\bar{p})\subset
  K_1$ and $\de \mathcal O_1\cap \Sigma \subset K_2$ and $\mathcal K=K_1\cup
  K_2$. Let $\mathcal O$ be an    $\e$-neighborhood of $\mathcal C (\bar{p})$ in $K_1$,
  taking
  $\e$ also smaller than the distance between $K_1$ and $K_2$. Then
  $\mathcal C (\bar{p})\subset \mathcal O$, $\de \mathcal O\cap \Sigma =\emptyset$ and $\mathcal O\cap
  \mathcal{S}\subset (\bar{p}-\e,\bar{p}+\e)\times X$. Moreover we can choose $\mathcal O$
  in such a way that there exists $c_0>0$ such that if $(p,v)\in \mathcal O$
  and $|p-\bar{p}|\geq\e   $, then $\nor v-\up\nor_{1,\g}\geq c_0$ (see
  for example Lemma 4.6 in \cite{AM} for a proof of this claim).\\[.2cm]
\noindent{\em Step 2 }- By the boundedness of $\mathcal O$, Proposition  \ref{uniq} and by Lemma \ref{p-to-palfa},
there exists $\d'>0$ such that $\mathcal O\subset (1+\d',p_{\a}-\d')\times
X$.  For a set $Y$ in $(1,p_{\a})\times X$ we denote by $Y_p$ or
$(Y)_p$ the set $\{v\in X\,:\, (p,v)\in Y\}$. Then
$\mathcal O_{1+\d'}=\mathcal O_{p_{\a}-\d'}=\emptyset$. Thus
$$\mathit{deg} \left( S(p_{\a}-\d',\cdot),\mathcal O_{p_{\a}-\d'},0\right)=\mathit{deg} \left(
S(1+\d',\cdot),\mathcal O_{1+\d'},0\right)=0.$$
Now we consider the interval $[\bar{p}+2\e,p_{\a}-\d']$. We know from Step 1 that $\de
_{[\bar{p}+2\e,p_{\a}-\d']\times X}\mathcal O$ does not contain any solution of
$S(p,v)=0$. This follows from the fact that $\de
\mathcal O \cap\Sigma=\emptyset$. From the homotopy invariance of the degree, then
we have
$$\mathit{deg} \left( S(\bar{p}+2\e,\cdot),\mathcal O_{\bar{p}+2\e},0\right)=\mathit{deg} \left(
S(p_{\a}-\d',\cdot),\mathcal O_{p_{\a}-\d'},0\right)=0.$$
As before we let $\Gamma_{c}:=\{(p,v)\in(1,p_{\a})\times
  X\,:\, \nor v-\up\nor_{X} <c\}$. Since
$\mathcal O_{\bar{p}+2\e}=\left(\mathcal O\setminus\overline\Gamma_{c}\right)_{\bar{p}+2\e}$
  for any $c\leq c_0$ we have
$$\mathit{deg} \left(
S(\bar{p}+2\e,\cdot),\left(\mathcal O\setminus\overline\Gamma_{c}\right)_{\bar{p}+2\e},0\right)=0$$
for any $c\leq c_0$.
In the same way, for any $c\leq c_0$,
$$\mathit{deg} \left( S(\bar{p}-2\e,\cdot),\left(\mathcal O\setminus\overline\Gamma_{c}\right)_{\bar{p}-2\e},0\right)=\mathit{deg} \left(
S(1+\d',\cdot),\mathcal O_{1+\d'},0\right)=0.$$
Let $\e'\in(0,\e)$ be such that $\Gamma_{c}\cap [\bar{p}-\e',\bar{p}+\e']\times
X\subset \mathcal O$ for any $c\leq c_0$ (taking a smaller  $c_0$ if needed).
We consider the interval
$[\bar{p}+\e',\bar{p}+2\e]$. In $[\bar{p}+\e',\bar{p}+2\e]$ there are not degeneracy
points by the definition of $\e$ in Step 1. Then if  $c_0$ is sufficiently small
there are not solutions of $S(p,v)=0$ on $\de \Gamma_{c}$ and hence there
are no solutions of $S(p,v)=0$ on $\de _{[\bar{p}+\e',\bar{p}+2\e]\times X}\left(\mathcal O\setminus
\overline \Gamma _{c}\right)$ for any $c\leq c_0$ (no solutions on $\de \mathcal O$ and no solutions
on  $\de \Gamma_{c}$). Therefore from the homotopy invariance of the degree,
we have
\begin{equation}\label{3.2}
\mathit{deg} \left(
S(\bar{p}+\e',\cdot),\left(\mathcal O\setminus\overline\Gamma _{c}\right)_{\bar{p}+\e'},0\right)=\mathit{deg}
\left(
S(\bar{p}+2\e,\cdot),\left(\mathcal O\setminus\overline\Gamma_{c}\right)_{\bar{p}+2\e},0\right)=0.
\end{equation}
Now we consider the interval
$[\bar{p}-\e',\bar{p}+\e']$. By the definition of $\mathcal O$ there are no solutions
(different from  $\up$) of $S(p,v)=0$ on $\de \mathcal O$, and the points
$(p,\up)$ are in the interior of $\mathcal O$, by the definition of $\e'$ if $p\in
[\bar{p}-\e',\bar{p}+\e']$. Hence there are no solutions of $S(p,v)=0$ on $\de
_{[\bar{p}-\e',\bar{p}+\e']\times X}\mathcal O$ and from the homotopy invariance of the degree,
we have
\begin{equation}\label{3.3}
\mathit{deg} \left( S(\bar{p}-\e',\cdot),\mathcal O_{\bar{p}-\e'},0\right)= \mathit{deg} \left(
S(\bar{p}+\e',\cdot),\mathcal O_{\bar{p}+\e'},0\right).
\end{equation}
From the  excision property of the degree
\begin{eqnarray}
&&\mathit{deg} \left( S(\bar{p}+\e',\cdot),\mathcal O_{\bar{p}+\e'},0\right)\nonumber\\
&&=\mathit{deg}
\left( S(\bar{p}+\e',\cdot),\left(\mathcal O\cap
\Gamma _{c}\right)_{\bar{p}+\e'},0\right)+\mathit{deg} \left(
S(\bar{p}+\e',\cdot),\left(\mathcal O\setminus\overline\Gamma_{c}\right)_{\bar{p}+\e'},0\right)\nonumber
\\
&&=(-1)^{m(\bar{p}+\e')}+\mathit{deg} \left(
S(\bar{p}+\e',\cdot),\left(\mathcal O\setminus\overline\Gamma_{c}\right)_{\bar{p}+\e'},0\right)
=(-1)^{m(\bar{p}+
  \e')}\label{3.4}\nonumber
\end{eqnarray}
where the second equality follows since  $u_{\bar{p}+\e'}$ is isolated and
nondegenerate, while the last equality follows from
(\ref{3.2}). Reasoning in the same way we have
$$\mathit{deg}\left(
S(\bar{p}-\e',\cdot),\mathcal O_{\bar{p}-\e'},0\right)=(-1)^{m(\bar{p}-\e')}.$$
As observed before $|m(\bar{p}+\e')-m(\bar{p}-\e')|=1$ and therefore
$$\mathit{deg}\left( S(\bar{p}+\e',\cdot),\mathcal O_{\bar{p}+\e'},0\right)=-\mathit{deg}\left(
S(\bar{p}-\e',\cdot),\mathcal O_{\bar{p}-\e'},0\right)$$
contradicting (\ref{3.3}).\\[.2cm]
\noindent{\em Step 3 }-
We proved so far that if $\mathcal C(\bar{p})$ is bounded then it must meet
$\mathcal S$ in some bifurcation point. Now we will prove that if $\mathcal C(\bar{p})$ is
bounded then it must meet $\mathcal S$ in some Morse index changing point
different from $(\bar{p},u_{\bar{p}})$. So let us suppose that $\mathcal C(\bar{p})$ is
bounded. Then it contains finitely many nonradial bifurcation points
with exponent $p_j$ which we
order by size $p_1<p_2<\dots<p_m$. Arguing as in Step 1 we can find a
bounded open set $\mathcal O\subset (1,p_{\a})\times X$ such that
$\, \mathcal C(\bar{p})\subset \mathcal O$, $\,\de \mathcal O\cap \Sigma =\emptyset$ and $\,\mathcal O$ does
not contain  points $(p,\up)$ if $|p-p_j|\geq \e_0$ for some $\e_0>0$
and $j=1,\dots,m$. We choose $\e_0$ less than the distance between
$\mathcal C(\bar{p})$ and the other nonradial bifurcation points in
$\mathcal{S}\cap \Sigma$, and less than $\d$ with
$\d$ is as in Step 1.\\
Let us assume, by contradiction, that the points $p_1,\dots,p_m$ are
degeneracy points such that $m(p_j+\e)=m(p_j-\e)$ for $ j=1,\dots,m$
and $\e\in (0,\e_0)$, i.e. $(p_j,u_{p_j})$ are not Morse index changing
points.\\
Arguing as in Step 2 and using the homotopy invariance of the degree
in the interval $[\bar{p}-\e,\bar{p}+\e]$ we have by (\ref{3.3})
\begin{eqnarray}
&&\mathit{deg} \left( S(\bar{p}+\e,\cdot),\mathcal O_{\bar{p}+\e},0\right)\nonumber\\
&&=(-1)^{m(\bar{p}+\e)}+\mathit{deg} \left(
S(\bar{p}+\e,\cdot),\left(\mathcal O\setminus\overline\Gamma_{c}\right)_{\bar{p}+\e},0\right)\nonumber\\
&&=(-1)^{m(\bar{p}-\e)}+\mathit{deg} \left(
S(\bar{p}-\e,\cdot),\left(\mathcal O\setminus\overline\Gamma_{c}\right)_{\bar{p}-\e},0\right)\nonumber\\
&&=\mathit{deg} \left( S(\bar{p}-\e,\cdot),\mathcal O_{\bar{p}-\e},0\right)\label{3.5}
\end{eqnarray}
and by (\ref{2.4})
at least one of the integers $\mathit{deg} \left(
S(\bar{p}+\e,\cdot),\left(\mathcal O\setminus\overline\Gamma_{c}\right)_{\bar{p}+\e},0\right)$
and $\mathit{deg} \left(
S(\bar{p}-\e,\cdot),\left(\mathcal O\setminus\overline\Gamma _{c}\right)_{\bar{p}-\e},0\right)$
is nonzero.\\
Let $p_s$ be the smallest value $p_j$ such that $p_j>\bar{p}$. Reasoning
as in Step 3 and using the homotopy invariance of the degree we can
find an $\e'\in (0,\e_0)$ such that
$$\mathit{deg} \left(
S(p,\cdot),\left(\mathcal O\setminus\overline\Gamma _{c}\right)_{p},0\right) =\mathit{deg} \left(
S(\bar{p}+\e,\cdot),\left(\mathcal O\setminus\overline\Gamma _{c}\right)_{\bar{p}+\e},0\right) $$
for every $p\in [\bar{p}+\e,p_s-\e']$. Moreover we get also that
$$\mathit{deg} \left(
S(p,\cdot), \mathcal O_{p},0\right)=\mathit{constant}$$
in $[p_s-\e',p_s+\e']$ and since $m(p_s-\e')=m(p_s+\e')$ we have
$$ \mathit{deg} \left(
S(p_s-\e',\cdot),\left(\mathcal O\setminus\overline\Gamma_{c}\right)_{p_s-\e'},0\right)=\mathit{deg} \left(
S(p_s+\e',\cdot),\left(\mathcal O\setminus\overline\Gamma_{c}\right)_{p_s+\e'},0\right).$$
Arguing as before we get that
$$\mathit{deg} \left(
S(p_{s+1}-\e'',\cdot),\left(\mathcal O\setminus\overline\Gamma_{c}\right)_{p_{s+1}-\e''},0\right)=\mathit{deg} \left(
S(p_{s+1}+\e'',\cdot),\left(\mathcal O\setminus\overline\Gamma_{c}\right)_{p_{s+1}+\e''},0
\right) $$
for some $\e''\in (0,\e_0)$.
Continuing this argument and observing that $\mathcal O_{p_{\a}-\d'}=\emptyset$ if $\d'$ is
small  enough, we find
\begin{equation}\label{3.6}
\mathit{deg} \left(
S(p_j+\e,\cdot),\left(\mathcal O\setminus\overline\Gamma_{c}\right)_{p_j+\e},0\right)=\mathit{deg} \left(
S(p_{\a}-\d',\cdot),\left(\mathcal O\setminus\overline\Gamma _{c}\right)_{p_{\a}-\d'},0\right)=0.
\end{equation}
A similar argument implies that
\begin{equation}\label{3.7}
\mathit{deg} \left(
S(\bar{p}-\e,\cdot),\left(\mathcal O\setminus\overline\Gamma_{c}\right)_{\bar{p}-\e},0\right)=0.
\end{equation}
But (\ref{3.6}) and (\ref{3.7}) together contradict (\ref{3.5}) and the thesis
follows.
\end{proof}
}
Theorem \ref{t-bif-globale}  shows that the branches that bifurcate from the Morse-index changing  points  are global.
For our purposes, it remains to show that at least one of them is not bounded in the space $X$.
To do this we need the following result:
\begin{proposition}\label{p1}
Let $\bar{p}$ be a Morse index changing point, and $ \mathcal{C}(\bar{p})$ as before.
If $ \mathcal{C}(\bar{p})$ is bounded, then
the number of the Morse index changing points in $ \mathcal{C}(\bar{p})$ including $(\bar{p},u_{\bar{p}})$ is even.
\end{proposition}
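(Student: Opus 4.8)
The plan is to adapt the Leray--Schauder degree argument of \cite[Theorem 3.3]{G} (see also \cite{AM}) and to read off the parity of the Morse index changing points on $\mathcal{C}(\bar p)$ from a telescoping identity for an auxiliary degree along the radial curve.

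First I would set up the picture. Suppose $\mathcal{C}(\bar p)$ is bounded: by Proposition \ref{uniq}, Lemma \ref{p-to-palfa} and Proposition \ref{positivity} it is compact and contained in $[1+\d,p_{\a}-\d]\times X$ for some $\d>0$, and since there are only finitely many degeneracy points (Proposition \ref{deg-points}) it meets the radial curve $\{(p,\up)\}$ only at finitely many exponents $p_1<\dots<p_m$, all of which are degeneracy points. Let $q_1<\dots<q_k$ be those among them that are Morse index changing, so that $\bar p\in\{q_1,\dots,q_k\}$; the claim is that $k$ is even. Exactly as in \cite[Theorem 3.3, Step 1]{G}, using Whyburn's separation lemma I would build a bounded open set $\mathcal{O}\supset\mathcal{C}(\bar p)$ with $\overline{\mathcal{O}}\subset(1+\d',p_{\a}-\d')\times X$, with $\partial\mathcal{O}\cap\Sigma=\emptyset$, and with $(p,\up)$ lying in $\mathcal{O}$ only for $p$ in small neighbourhoods of the $p_j$'s; and I would fix the thin radial tube $\Gamma_c:=\{(p,v)\in(1,p_\a)\times X:\ \|v-\up\|_X<c\}$ with $c>0$ small. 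Writing $(Y)_p:=\{v:(p,v)\in Y\}$, I then introduce, for $p\in(1+\d',p_\a-\d')\setminus\{p_1,\dots,p_m\}$,
\[ d(p):=\deg\big(S(p,\cdot),(\mathcal{O}\setminus\overline{\Gamma_c})_p,0\big), \]
where $S$ is the compact perturbation of the identity from the proof of Theorem \ref{t-bif}. Since no solution of $S(p,v)=0$ lies on $\partial\mathcal{O}$, and — for $c$ small and $p$ away from the $p_j$'s — no branch solution lies on $\partial\Gamma_c$ either (radial nondegeneracy, Lemma \ref{lin-rad}, cf.\ \cite[Lemma 4.6]{AM}), homotopy invariance of the degree makes $d$ locally constant on $(1+\d',p_\a-\d')\setminus\{p_1,\dots,p_m\}$; a fold point of $\mathcal{C}(\bar p)$ produces no jump, the two merging solutions contributing opposite indices. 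Because $\mathcal{O}_p=\emptyset$ near the endpoints, $d\equiv 0$ on $(1+\d',p_1)$ and on $(p_m,p_\a-\d')$.

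Next I would compute the jump of $d$ across a degeneracy point $p_j$. For the $\e>0$ furnished by \cite[Lemma 4.6]{AM} the degree $\deg(S(p,\cdot),\mathcal{O}_p,0)$ is constant on $(p_j-\e,p_j+\e)$ (there are no solutions on $\partial\mathcal{O}_p$, and $(p,\up)$ is interior for $p$ near $p_j$, even though $u_{p_j}$ is degenerate). By the excision property, and by the choice of $\e$ and $c$, for $p=p_j\pm\e$ one has
\[ \deg(S(p,\cdot),\mathcal{O}_p,0)=\deg\big(S(p,\cdot),(\mathcal{O}\cap\Gamma_c)_p,0\big)+d(p)=(-1)^{m(p)}+d(p), \]
where $m(p)$ is the Morse index of $\up$ in $X$: indeed at $p=p_j\pm\e$ the only solution of $S(p,v)=0$ inside the tube is $\up$, which is nondegenerate there, so its local degree is $(-1)^{m(p)}$ by Lemma \ref{equiv-Morse-index}. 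Subtracting the two identities gives
\[ d(p_j+\e)-d(p_j-\e)=(-1)^{m(p_j-\e)}-(-1)^{m(p_j+\e)}\in\{-2,0,2\}, \]
and this number is $\pm2$ exactly when $p_j$ is a Morse index changing point — for then $|m(p_j+\e)-m(p_j-\e)|=1$ by \eqref{2.4} — while it is $0$ at a degeneracy point that does not change the Morse index.

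Finally I would telescope. Summing the jumps of $d$ from $p_1-\e$ (where $d=0$) to $p_m+\e$ (where $d=0$) and discarding the vanishing terms,
\[ 0=\sum_{j=1}^m\Big((-1)^{m(p_j-\e)}-(-1)^{m(p_j+\e)}\Big)=\sum_{i=1}^k\Big((-1)^{m(q_i-\e)}-(-1)^{m(q_i+\e)}\Big), \]
a sum of exactly $k$ terms, each equal to $+2$ or $-2$; such a sum can vanish only if $k$ is even, which is the assertion. The hard part is not the parity count but the construction of the isolating set $\mathcal{O}$ and of the tube $\Gamma_c$ that makes all the degrees above well defined and homotopy invariant — i.e.\ ruling out solutions of $S(p,v)=0$ on $\partial\mathcal{O}$ and on $\partial\Gamma_c$ for $p$ in the relevant compact intervals; this is routine but technical, and is carried out in full in \cite[Theorem 3.3]{G} and \cite{AM}.
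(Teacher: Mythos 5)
Your argument is correct, and it establishes the same parity identity the paper proves, namely that the sum of $(-1)^{m(p_j-\e)}-(-1)^{m(p_j+\e)}$ over the bifurcation exponents $p_j$ on $\mathcal{C}(\bar p)$ vanishes, and since each nonzero term is $\pm 2$ the number of Morse-index changing points must be even. The route, however, is genuinely different from the paper's. You track the one-parameter family of Leray--Schauder degrees $d(p)=\deg\big(S(p,\cdot),(\mathcal{O}\setminus\overline{\Gamma_c})_p,0\big)$, show it is locally constant away from the $p_j$'s and zero near the endpoints, and compute each jump $d(p_j+\e)-d(p_j-\e)$ by excision with the tube $\Gamma_c$; this is a Rabinowitz-style ``slicing and telescoping'' argument, close in spirit to the (suppressed) Steps 2--3 of \cite[Theorem 3.3]{G}. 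The paper instead follows Ize's refinement (cited via \cite{N}): it passes to the product space $X\times\R$, introduces the auxiliary map $S_r(p,v)=\big(S(p,v),\|v-u_p\|_X^2-r^2\big)$ on $\overline{\mathcal{O}}$, observes that $\deg(S_r,\mathcal{O},(0,0))$ is $r$-independent and zero for $r$ large, and then localizes for $r$ small near each $(p_j,u_{p_j})$, computing the local degree via the homotopy $t\mapsto S_r^t$ to the split map $\big(S(p,v),r^2-(p-p_j)^2\big)$ and the product formula. The two approaches rest on the same topological ingredients (the isolating neighborhood $\mathcal{O}$, radial nondegeneracy, the one-dimensionality of the kernel in $X$ giving the $\pm 1$ Morse-index jump), but the Ize map bundles the telescoping into a single degree computation, whereas your presentation makes the bookkeeping explicit. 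One small remark: the aside about fold points ``producing no jump by opposite indices'' is unnecessary — homotopy invariance of $d(p)$ already follows once you know no solution lies on $\partial\mathcal{O}$ or on $\partial\Gamma_c$ for $p$ away from the $p_j$'s, without any case analysis of how $\mathcal{C}(\bar p)$ folds.
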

This result is based on an improved version of the Rabinowitz alternative due to Ize (see \cite{N}) and  uses again the Leray-Schauder degree
theory.
\begin{proof}
If
$\mathcal C(\bar{p})$ is bounded
then $b)$ of Theorem \ref{t-bif-globale} holds and $\mathcal C(\bar{p})$ must meet the curve of radial solutions, that we call $\mathcal  S$, in
at least one point $(p_i,u_{p_i})$, such that $p_i$ is a degeneracy point. But it can meet the curve $\mathcal  S$ also in other bifurcation points.
Recalling that the bifurcation points have to be related to degeneracy points $p_j$, Proposition \ref{deg-points} implies that  $\mathcal C(\bar{p})$
can meet $\mathcal  S$ at most in
finitely many bifurcation
points $(p_j,u_{p_j})$, $j=1,\dots,m$ with  $p_1<p_2<\dots<p_m$.
By the same arguments of \cite[Theorem 3.3, Steps 3 and 5]{G},
there is a bounded open set
$\mathcal O\subset (1,p_{\a})\times X$ such that $\mathcal C(\bar{p})\subset \mathcal O$ and $\de \mathcal O
\cap \Sigma=\emptyset$ with $\Sigma$ as in \eqref{3.1}. Moreover we can  assume that $\mathcal O$ does not contain points $(p,\up)$ if
$|p-p_j|\geq \e_0$ for $j=1,\dots, m$ and $\e_0>0$ such that there are not degeneracy points in
$\cup_{j=1}^m (p_j-2\e_0,p_j+2\e_0)$, again from Proposition \ref{deg-points}. \\
For $\mathcal O$ as above and $r>0$, consider the map
$$\begin{array}{llll}
S_r(p,v):&\hbox{ }\overline{ \mathcal O} &\rightarrow &\hbox{  } X\times \R\\
&(p,v)&\mapsto &\left(S(p,v)
,\nor v-\up \nor_X^2 -r^2\right)
\end{array}$$
where $\nor \cdot\nor_X$ stands for  the usual norm in the space $C^{1,\gamma}_0(\B )$.
Now, $\mathit{deg}\left( S_r(p,v),\mathcal O,(0,0)\right)$ is defined since
on $\de \mathcal O$ there are no  solutions of $S(p,v)=0$ different from the
radial solution $u_p$, and hence
$0=\nor v-\up \nor_X<r$ for such any solution. Furthermore the degree is
independent of $r>0$. For large $r$, $S_r(p,v)=(0,0)$ has no solutions in
$\mathcal O$, and hence has degree zero. On the other hand, for small  $r$, if
$(p,v)$ is a solution of $S_r(p,v)=(0,0)$, then $\nor v-\up\nor_X =r$, and hence
$p$ is close to one of the $p_j$, $j=1,\dots,m$. But then the sum of
local degrees of $S_r$ in the neighborhoods of each of the $p_j$ is
equal to zero, so that
\begin{equation}\label{2.sum}
0=\sum_{j=1}^m \mathit{deg}\left( S_r(p,v),\mathcal O\cap
B_{r}(p_j,u_{p_j}),(0,0)\right).
\end{equation}
In particular we choose $r<\e_0$ for $\e_0$ defined as before.
In order to compute the degree of $S_r(p,v)$ in $\mathcal O\cap B_{r}(p_j,u_{p_j})$ we
use again the homotopy invariance of the degree. Let us define
$$S_r^t(p,v)=\left(S(p,v), t(\nor v-\up\nor_X^2
-r^2)+(1-t)(2p_jp-p^2-p_j^2+r^2)\right)$$
for $t\in [0,1]$.
As before $\mathit{deg}\left( S_r^t(p,v), \mathcal O\cap B_{r}(p_j,u_{p_j}),
  (0,0)\right) $ is well defined  since there are no solutions on the
  boundary if $r$ is small (recall that $u_{p_j\pm r}$ are isolated if
  $r<\e_0$). Moreover the degree is independent of
  $t$. For $t=1$ we have $S_r^1(v,p)=S_r(p,v)$, while for $t=0$,
$S_r^0(p,v)=\left(S(p,v),2p_jp-p^2-p_j^2+r^2\right)$ and
\begin{eqnarray}
&& \mathit{deg}\left( S_r^0(p,v),\mathcal O\cap B_{r}(p_j,u_{p_j}),
  (0,0)\right)\nonumber\\
&&= \mathit{deg} \left(
  S(p,v),\mathcal O\cap B_{r}(p_j,u_{p_j}) ,0\right)\cdot \mathit{deg}\left( 2p_jp-p^2-p_j^2+r^2,
  \{|p-p_j|<r\},0\right).\nonumber
\end{eqnarray}
Now
$$\mathit{deg}\left( 2p_jp-p^2-p_j^2+r^2,
  \{|p-p_j|<r\},0\right)=1$$
for $p=p_j-r$ while
$$\mathit{deg}\left( 2p_jp-p^2-p_j^2+r^2,
  \{|p-p_j|<r\},0\right)=-1$$
for $p=p_j+r$. This implies that
\begin{eqnarray}
&&\mathit{deg}\left( S_r(p,v),\mathcal O\cap
B_{r}(p_j,u_{p_j}),(0,0)\right)=\nonumber\\
&&\mathit{deg}\left(
S(p_j-r,\cdot),\mathcal O_{p_j-r},0\right) - \mathit{deg}\left(
S(p_j+r,\cdot),\mathcal O_{p_j+r},0\right)\nonumber\\
&&=(-1)^{m(p_j-r)}-(-1)^{m(p_j+r)}\nonumber
\end{eqnarray}
where  we denote by $\mathcal O_p$ the set $\{v\in \mathcal O\, :\,
(p,v)\in \mathcal O\}$.\\
We conclude that if $(p_j, u_{p_j})$ is a Morse index changing
point then
$$\mathit{deg}\left( S_r(p,v),\mathcal O\cap
B_{r}(p_j,u_{p_j}),(0,0)\right)=\pm 2$$ while if $(p_j, u_{p_j})$ is
not a Morse index changing point then
$$\mathit{deg}\left( S_r(p,v),\mathcal O\cap
B_{r}(p_j,u_{p_j}),(0,0)\right)=0.$$ Since the nonzero terms in
(\ref{2.sum}) correspond  only to the Morse index changing points, and since these
terms add up to zero, there must be an even number of Morse index
changing points.
\end{proof}

Summing up we get

\begin{proof}[Proof of Theorem \ref{tbif}.]
Proposition \ref{therearemicp} states that there exists an odd number of Morse index changing points.
Such points give rise to bifurcation by  Theorem \ref{t-bif}.
Besides if some bifurcating branch $\mathcal{C}(\bar{p})$ is bounded, then it contains an even number of  Morse index changing points by Proposition \ref{p1}.
This implies, in turn, that at least one of the Morse index changing points gives rise to an unbounded branch of nonradial solutions.
\end{proof}

\section{Appendix}
We prove here some facts that have been used in Section \ref{s2}.
First  we show the equivalence between the Morse index of the radial solution $\up$ and the number of eigenvalues of \eqref{A} less than $1$.
This is a standard result and we report it only for reader's convenience.

\begin{lemma}\label{equiv-Morse-index}
The Morse index of $\up$ coincides with the number of the eigenvalues of \eqref{A} less than $1$, counted with their multiplicity.
\end{lemma}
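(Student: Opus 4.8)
The plan is to derive the identity from the Courant--Fischer min--max characterizations of the two spectra. Abbreviate by
\[
Q(v):=\int_{\B}\Big(|\na v|^2-p|x|^{\a}\up^{p-1}v^2\Big)\,dx,\qquad
D(v):=\int_{\B}p|x|^{\a}\up^{p-1}v^2\,dx ,
\]
so that $Q$ is the quadratic form of the linearized operator and $D$ is the weighted $L^2$--seminorm occurring in \eqref{A}. Since $\a>0$ and $\up$ is bounded, the weight $p|x|^{\a}\up^{p-1}$ belongs to $L^{\infty}(\B)$ and is strictly positive on $\B\setminus\{0\}$ because $\up>0$ in $\B$; hence $D(v)>0$ for every $v\in H^1_0(\B)\setminus\{0\}$. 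By the standard spectral theory of compact self--adjoint operators, \eqref{A} then admits a nondecreasing sequence of positive eigenvalues $\L_1\le\L_2\le\dots\to+\infty$ (repeated according to multiplicity), characterized by
\[
\L_k=\min_{\substack{V\subset H^1_0(\B)\\ \dim V=k}}\ \max_{v\in V\setminus\{0\}}\ \frac{\int_{\B}|\na v|^2\,dx}{D(v)} ,
\]
while the eigenvalue problem for the linearized operator has eigenvalues $\nu_1\le\nu_2\le\dots\to+\infty$ with $\nu_k=\min_{\dim V=k}\max_{v\in V\setminus\{0\}}Q(v)/\int_{\B}v^2\,dx$; the Morse index of $\up$ is, by definition, $\#\{k:\nu_k<0\}$.

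The first step is the elementary observation that, for every $v\neq 0$,
\[
Q(v)<0\ \iff\ \int_{\B}|\na v|^2\,dx<D(v)\ \iff\ \frac{\int_{\B}|\na v|^2\,dx}{D(v)}<1 ,
\]
the last equivalence using $D(v)>0$. The second step rephrases both min--max formulas in terms of subspaces: since $\int_{\B}v^2\,dx>0$ for $v\neq0$, one has $\nu_k<0$ if and only if there is a $k$--dimensional subspace of $H^1_0(\B)$ on which $Q$ is negative definite; and, since $D(v)>0$ for $v\neq0$, one has $\L_k<1$ if and only if there is a $k$--dimensional subspace on which the Rayleigh quotient $\int_{\B}|\na v|^2\,dx/D(v)$ is everywhere $<1$. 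For this last equivalence, the ``only if'' direction follows by taking $V$ to be the span of the first $k$ eigenfunctions of \eqref{A}, on which that quotient is a convex combination of $\L_1,\dots,\L_k$, hence $\le\L_k<1$; the ``if'' direction follows from the min--max formula for $\L_k$ together with the fact that, $V$ being finite dimensional, the quotient attains its maximum on the compact unit sphere of $V$, so that $\L_k\le\max_{v\in V\setminus\{0\}}\int_{\B}|\na v|^2\,dx/D(v)<1$.

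Combining the two steps with the pointwise equivalence $Q(v)<0\Leftrightarrow \int_{\B}|\na v|^2\,dx/D(v)<1$ yields $\nu_k<0\Leftrightarrow \L_k<1$ for every $k$, whence $\#\{k:\nu_k<0\}=\#\{k:\L_k<1\}$, which is precisely the assertion; multiplicities are accounted for automatically by the repetition of equal eigenvalues in the two lists. The only point requiring some care -- and essentially the sole potential obstacle -- is the non--degeneracy of $D$ on $H^1_0(\B)\setminus\{0\}$, which is what makes the weighted Rayleigh quotient, the min--max formula, and the whole spectral picture for \eqref{A} meaningful; this is guaranteed by $\up>0$ in $\B$ together with $\a>0$, so that the weight vanishes only at the origin. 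All remaining ingredients are routine facts about compact self--adjoint operators and about quadratic forms with a bounded, nonnegative weight.
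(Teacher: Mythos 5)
Your proof is correct and follows essentially the same route as the paper's: both arguments rest on the pointwise equivalence $\int_{\B}|\nabla v|^2-p\int_{\B}|x|^{\alpha}u_p^{p-1}v^2<0\iff \int_{\B}|\nabla v|^2\big/\big(p\int_{\B}|x|^{\alpha}u_p^{p-1}v^2\big)<1$, combined with the min--max characterizations of the two spectra over $k$-dimensional test subspaces. The paper phrases it as two one-sided inequalities ($\tilde j\ge j$ and $\tilde j>j$ yields a contradiction) rather than your symmetric $\nu_k<0\iff\L_k<1$, and you additionally spell out the positivity of the weighted $L^2$ form, but the substance is identical.
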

\begin{proof}
Let $M(p)=j$ be the Morse index of $\up$ and $\widetilde{M}(p)=\tilde j$ be the number of the eigenvalues of \eqref{A} less than $1$, counted with their multiplicity. By definition there exist $j$ eigenfunctions $v_1,\dots,v_j\in H^1_0(\B )$ and $j$ eigenvalues $\l_1,\dots,\l_j$ such that $-\Delta v_n-p|x|^{\a}\up^{p-1}v_n=\l_nv_n$ in $\B $ and $v_n=0$ on $\de \B $ and $\l_n<0$ for any $n=1,\dots,j$, and $\l_{j+1}\geq 0$. For any $v\in {\mathrm{Span}} <v_1, \dots,v_j>$ then we have
$$\int_{\B }|\na v|^2 -p|x|^{\a}\up^{p-1}v^2\, dx\leq \l_j \int_{\B }v^2\, dx<0$$
so that
$$\frac{\int_{\B }|\na v|^2\, dx}{p\int_{\B }|x|^{\a}\up^{p-1}v^2\, dx}<1$$
and this implies in turn that $\tilde{j}\geq j$. Suppose by contradiction that $\tilde j>j$. Then there exists at least $j+1$ functions $\tilde v_1,\dots,\tilde v_{j+1}\in H^1_0(\B )$ such that
$$\frac{\int_{\B }|\na v|^2\, dx}{p\int_{\B }|x|^{\a}\up^{p-1}v^2\, dx}<1$$
for any $v\in {\mathrm{Span}} <\tilde v_1,\dots,\tilde v_{j+1}>$ and this implies that \[\int_{\B }|\na v|^2 -p|x|^{\a}\up^{p-1}v^2\, dx<0\] for any $v\in{\mathrm{Span}} <\tilde v_1,\dots,\tilde v_{j+1}>$, so that
$$\l_{j+1}\leq \max_{\substack{v\in<\tilde v_1,\dots,\tilde v_{j+1}> \\v\neq 0}}\frac{\int_{\B }|\na v|^2 -p|x|^{\a}\up^{p-1}v^2\, dx}{\int_{\B }v^2\, dx}<0$$
contradicting the definition of Morse index.
\end{proof}

Next we show an useful estimate for the function $u_p$.
\begin{proof}[Proof of Lemma \ref{der-seconda}]
Let $\tilde{u}_p$ be a radial minimizer for the functional
$$I[v]:=\frac{\int_{\B }|\na v|^2\, dx}{\left(\int_{\B }|x|^{\a}|v|^{p+1}\,dx\right)^{\frac 2{p+1}}}$$
in the space $H^1_0(\B )$. We can assume $\utp\geq 0$ in $\B $ otherwise we can consider $|\utp|$ instead of $\utp$. Then the function $\utp$ minimizes the functional
\begin{equation}\label{rad-funct}
Q[v]:=\frac{\int_0^1r^{N-1}( v')^2\, dr}{\left(\int_0^1r^{N-1+\a}|v|^{p+1}\,dr\right)^{\frac 2{p+1}}}
\end{equation}
in the space $H^1_{0,rad}(\B )$. This implies that $Q'_{[\utp]}(v)=0$ and $Q''_{[\utp]}(v,v)\geq 0$  for any $v\in H^1_{0,rad}(\B )$.
By computation
\begin{align}
Q'_{[u]}(v)=&\frac 1{\left(
  \int_0^1 r^{N-1+\a} u^{p+1}\, dr\right)^{\frac 4{p+1}}}\left[2\int_0^1 r^{N-1} u' v'
  \, dr\left( \int_0^1 r^{N-1+\a} u^{p+1}\, dr\right)^{\frac 2{p+1}}\right.\nonumber\\
-&\left. \frac 2{p+1}\!\int_0^1\!\!\!\! r^{N-1} (u')^2
  \, dr \left( \int_0^1\!\! r^ {N-1+\a} u^{p+1}\, dr\right)^{\frac 2{p+1}-1}\!\!\!\!\!\!\!\!\!\!\!\!\!(p+1)\int_0^1 \!\!r^{N-1+\a} u^p v \, dr\right]\nonumber
\end{align}
and hence
\begin{equation}\label{R1}
Q'_{[\tilde u_{p}]}(v)=2\frac {\int_0^1 r^{N-1}\tilde u_{p}' v'
  \, dr -\b_{p}\int_0^1 r^{N-1+\a} \tilde u_{p}^p v \, dr}{\left(
  \int_0^1 r^{N-1+\a}\tilde u_{p}^{p+1}\, dr\right)^{\frac 2{p+1}}}
\end{equation}
where
\begin{equation}\label{R2}
\b_{p}=\frac {\int_0^1r^{N-1}\left(
  \widetilde {u}_{p}'\right)^2 \,dr}{\int
  _0^1 r^{N-1+\a} \widetilde {u}_{p}^{p+1} \,dr}=\frac {\int_{\B }|\na \widetilde {u}_{p}|^2 dx}{\int
  _{\B } r^{N-1+\a} \widetilde {u}_{p}^{p+1} \,dx}.
\end{equation}
From  $Q'_{[\tilde u_{p}]}(v)=0$ for any $v\in H^1_{0,rad}(\B )$, it follows that  $\tilde u_{p}$ is a radial solution of
\begin{equation}\label{R3}
-\Delta \widetilde {u}_{p}=|x|^\a \b_{p}\widetilde {u}_{p}^{p}\quad
\hbox{ in }\B .
\end{equation}
Then  $u_{p}=\b_{p}^{\frac 1{p-1}}\widetilde {u}_{p}$, because  the radial solution $u_{p}$ of (\ref{1}) is unique.
From (\ref{R1}) and (\ref{R2}) we have
\begin{eqnarray}
&&Q''_{[\tilde u_{p}]}(v,v)=\frac 2{\left(\int_0^1 r^{N-1+\a}\tilde
  u_{p} ^{p+1}\, dr\right)^{\frac 4{p+1}}} \left\{\left( \int_0^1
    r^{N-1+\a} \tilde u_{p}^{p+1}\, dr\right)^{\frac 2{p+1}} \left(\int
  _0^1 r^{N-1}(v')^2 \, dr \right.\right.\nonumber\\
&&-p\,\b_{p}\int_0^1
  r^{N-1+\a} \tilde u_{p}^{p-1}v^2\, dr -\int_0^1 r^{N-1+\a}  \tilde
  u_{p}^p v \, dr \nonumber\\
&&\left.\left(\frac {2\int_0^1 r^{N-1}\tilde u_{p}' v' \, dr \cdot \int_0^1
    r^{N-1+\a} \tilde u_{p}^{p+1}  \, dr- \int_0^1
    r^{N-1} \left(\tilde u_{p}'\right)^2\, dr \cdot (p+1)\int_0^1
    r^{N-1+\a} \tilde u_{p}^p v\, dr }{\left(\int_0^1
    r^{N-1+\a} \tilde u_{p}^{p+1}\,
    dr\right)^2}\right)\right)\nonumber\\
&& -\left( \int_0^1
    r^{N-1} \tilde u_{p}' v' \, dr -\b_{p}\int_0^1
    r^{N-1+\a} \tilde u_{p}^p v \, dr\right) \nonumber\\
&&\left.\frac 2{p+1}\left( \int_0^1
    r^{N-1+\a} \tilde u_{p}^{p+1}\, dr \right)^{\frac
      2{p+1}-1}(p+1)\int_0^1
    r^{N-1+\a} \tilde u_{p}^p v\,dr\right\}\nonumber.
\end{eqnarray}
Then, using that $\int_0^1 r^{N-1} \left(\widetilde u_{p}'\right)^ 2
\, dr=\b_{p}\int_0^1 r^{N-1+\a} \widetilde u_{p}^{p+1}\, dr$ we get
\begin{eqnarray}
&&Q''_{[\tilde u_{p}]}(v,v)=\frac 2{\left(\int_0^1\!\! r^{N-1+\a}\tilde
  u_{p} ^{p+1}\, dr\right)^{\frac 2{p+1}}} \left\{\int
  _0^1\!\! r^{N-1}(v')^2 \, dr-p\b_{p}\int_0^1\!\!
  r^{N-1+\a} \tilde u_{p}^{p-1}v^2\, dr\nonumber\right.\\
&& -2 \frac {\int_0^1
    r^{N-1+\a} \tilde u_{p}^p v\, dr\int_0^1 r^{N-1}\tilde u_{p}' v'
    \, dr }{\int_0^1
    r^{N-1+\a} \tilde u_{p}^{p+1}\,
    dr}+(p+1)\b_{p}\frac {\left( \int_0^1
    r^{N-1+\a} \tilde u_{p}^p v\, dr\right)^2}{\int_0^1
    r^{N-1+\a} \tilde u_{p}^{p+1}\,
    dr}\nonumber\\
&& \left.-2\frac {\int_0^1 r^{N-1}\tilde u_{p}' v'
    \, dr \int_0^1
    r^{N-1+\a} \tilde u_{p}^p v\, dr}{\int_0^1
    r^{N-1+\a} \tilde u_{p}^{p+1}\,
    dr}+ 2\b_{p}\frac {\left( \int_0^1
    r^{N-1+\a} \tilde u_{p}^p v\, dr\right)^2}{\int_0^1
    r^{N-1+\a} \tilde u_{p}^{p+1}\,
    dr}\nonumber\right\}.
\end{eqnarray}
Since $Q''_{[\tilde u_{p}]}(v,v)\geq 0$, we have
\begin{eqnarray}
&&\!\!\!\!\!\!\! \int
  _0^1 \!\!r^{N-1}(v')^2 \, dr-p\b_{p}\int_0^1\!\!
  r^{N-1+\a} \tilde u_{p}^{p-1}v^2\, dr+(p+3) \b_{p}\frac {\left( \int_0^1
\!\!    r^{N-1+\a} \tilde u_{p}^p v\, dr\right)^2}{\int_0^1
\!\!    r^{N-1+\a} \tilde u_{p}^{p+1}\,
    dr}\nonumber\\
&&\!\!\!\!\!\!\!
-4 \frac {\int_0^1
    r^{N-1+\a} \tilde u_{p}^p v\, dr }{\int_0^1
    r^{N-1+\a} \tilde u_{p}^{p+1}\,
    dr}\int_0^1 r^{N-1}\tilde u_{p}' v'
    \, dr\geq 0\nonumber
\end{eqnarray}
and hence, from (\ref{R3})
$$\!\!\int
  _0^1 \!\!r^{N-1}(v')^2 \, dr-p\b_{p}\int_0^1\!\!
  r^{N-1+\a} \tilde u_{p}^{p-1}v^2\, dr+(p-1) \b_{p}\frac {\left( \int_0^1\!\!
    r^{N-1+\a} \tilde u_{p}^p v\, dr\right)^2}{\int_0^1\!\!
    r^{N-1+\a} \tilde u_{p}^{p+1}\,
    dr}\geq 0.$$
Recalling that $u_{p}=\b_{p}^{\frac 1{p-1}}\tilde u_{p}$ we get
\begin{eqnarray}\nonumber
\!\!\!\!\!\!\!\!\int
  _0^1 r^{N-1}(v')^2 \, dr-p\int_0^1
  r^{N-1+\a} u_{p}^{p-1}v^2\, dr
+(p-1)\frac {\left( \int_0^1
    r^{N-1+\a}  u_{p}^p v\, dr\right)^2}{\int_0^1
    r^{N-1+\a}  u_{p}^{p+1}\,
    dr}\geq 0
\end{eqnarray}
for any $v\in H^1_{0,rad}(\B )$.
\end{proof}

\end{document}